\def\presuper#1#2%
\newcommand\E{\mathbb{E}}
\newcommand\R{\mathbb{R}}
\newcommand\N{\mathbb{N}}
\renewcommand\P{\mathbb{P}}
\newcommand\G{\mathcal{G}}
\newcommand\T{\mathcal{T}}
\newcommand\F{\mathcal{F}}
\renewcommand\L{\mathcal{L}}
\newtheorem{theorem}{Theorem}[section]
\newtheorem{lemma}[theorem]{Lemma}
\newtheorem{proposition}[theorem]{Proposition}
\newtheorem{numerics}[theorem]{Numerical Evidence}
\theoremstyle{definition}
\newtheorem{remark}[theorem]{Remark}
\begin{document}

\begin{frontmatter}
\title{The Sharp Constant for the Burkholder-Davis-Gundy Inequality and Non-Smooth Pasting}
\runtitle{The Sharp Constant for the BDG inequality}
\runauthor{W. Schachermayer and F.Stebegg}

\begin{aug}
\author{\fnms{Walter} \snm{Schachermayer} \thanksref{a1,e1,t1}\ead[label=e1,mark]{walter.schachermayer@univie.ac.at}}
\and
\author{\fnms{Florian} \snm{Stebegg} \thanksref{a2,e2,t2}\ead[label=e2,mark]{florian.stebegg@columbia.edu}}
\address[a1]{Fakult\"at f\"ur Mathematik, Universit\"at Wien \\ Oskar-Morgenstern-Platz 1, Room 06.131\\ A-1090 Wien.\\ \printead{e1}}
\address[a2]{Department of Statistics, Columbia University \\ 1255 Amsterdam Avenue, Room 906\\ 10027 New York, NY.\\ \printead{e2}}

\thankstext{t1}{Supported in part by the Austrian Science Fund (FWF) under grant P25815 and P28661 and by the Vienna Science and Technology Fund (WWTF) under grant MA14-008.}
\thankstext{t2}{Supported in part by the Austrian Science Fund (FWF) under grants P26736 and Y782-N25.}

\end{aug}

\begin{abstract}
We revisit the celebrated family of BDG-inequalities introduced by Burkholder, Gundy
\cite{BuGu70} and Davis \cite{Da70} for continuous martingales.
For the inequalities $\E[\tau^{\frac{p}{2}}] \leq C_p \E[(B^*(\tau))^p]$ with $0 < p < 2$ we propose
a connection of the optimal constant $C_p$ with an
ordinary integro-differential equation which gives rise to a numerical method of finding
this constant. Based on numerical evidence we are able to calculate, for $p=1$, the explicit
value of the optimal constant $C_1$, namely $C_1 = 1,27267\dots$. In the course of our analysis, we find
a remarkable appearance of ''non-smooth pasting`` for a solution of a related ordinary integro-differential
equation.
\end{abstract}

\begin{keyword}[class=MSC]
\kwd[Primary ]{62L15}
\kwd{93E20}
\kwd[; secondary ]{45J05}
\end{keyword}

\begin{keyword}
\kwd{BDG inequality}
\kwd{Optimal Stopping}
\kwd{Non-Smooth Pasting}
\kwd{Ordinary Integro-Differential Equations}
\end{keyword}
\end{frontmatter}

\maketitle

\section{Introduction}

We consider the following version of the Burkholder-Davis-Gundy inequality \cite{BuGu70}, \cite{Da70}:

\begin{theorem}
\label{thm:main}
There is a constant $C>0$ such that, for every bounded stopping time $\tau$, we have
\begin{align}\label{V1}
\mathbb{E}\Big[\tau^{\frac{1}{2}}\Big] \leq C \, \mathbb{E} [B^*(\tau)].
\end{align}
\end{theorem}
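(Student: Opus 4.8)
The plan is to prove \eqref{V1} by the classical two-step scheme: first a \emph{good-$\lambda$ inequality} comparing $\sqrt\tau$ and $B^*(\tau)$ at a single level, and then a layer-cake integration following Burkholder. Throughout, $B$ is a standard Brownian motion with $B_0=0$, $B^*(t)=\sup_{s\le t}|B_s|$, and we may assume $\tau\le T$ for some constant $T$; in particular $\E[\sqrt\tau]\le\sqrt T<\i$ and $\E[B^*(\tau)]\le\E[B^*(T)]<\i$, so the cancellation step at the end will be legitimate. The one observation that makes everything run is the triviality $\{\sqrt\tau>\lambda\}=\{\tau>\lambda^2\}$: the level $\lambda$ for $\sqrt\tau$ is reached at the \emph{deterministic} time $\lambda^2$, so the strong Markov property can be applied there.

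The good-$\lambda$ inequality I would aim for is: for a fixed $\beta>1$ and every $\delta,\lambda>0$,
\begin{equation}\label{eq:gl}
\P\big(\tau>\beta^2\lambda^2,\ B^*(\tau)\le\delta\lambda\big)\le\varepsilon(\delta)\,\P\big(\tau>\lambda^2\big),\qquad \varepsilon(\delta):=\P\Big(\sup_{0\le s\le\beta^2-1}|W_s|\le2\delta\Big),
\end{equation}
with $W$ a Brownian motion, where $\varepsilon(\delta)\downarrow0$ as $\delta\downarrow0$ by the Brownian small-ball estimate (in fact $\varepsilon(\delta)\le C_0e^{-c_0/\delta^2}$ for suitable constants). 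To prove \eqref{eq:gl} I would work on the event $A:=\{\tau>\lambda^2\}\in\F_{\lambda^2}$ and introduce the post-$\lambda^2$ increments $\widetilde B_s:=B_{\lambda^2+s}-B_{\lambda^2}$, which form a Brownian motion independent of $\F_{\lambda^2}$. On $A\cap\{B^*(\tau)\le\delta\lambda\}$ one has $|B_{\lambda^2}|\le\delta\lambda$ and $|B_{\lambda^2+s}|\le\delta\lambda$ for $0\le s\le\tau-\lambda^2$, hence $|\widetilde B_s|\le2\delta\lambda$ on $[0,\tau-\lambda^2]$; if moreover $\tau>\beta^2\lambda^2$ then $\tau-\lambda^2>(\beta^2-1)\lambda^2$, so $\sup_{0\le s\le(\beta^2-1)\lambda^2}|\widetilde B_s|\le2\delta\lambda$. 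Thus
\[
\{\tau>\beta^2\lambda^2,\ B^*(\tau)\le\delta\lambda\}\subseteq A\cap\Big\{\sup_{0\le s\le(\beta^2-1)\lambda^2}|\widetilde B_s|\le2\delta\lambda\Big\},
\]
and since the second event depends only on $\widetilde B$, hence is independent of $A\in\F_{\lambda^2}$, taking probabilities and rescaling by $\lambda$ (so that $\sup_{s\le(\beta^2-1)\lambda^2}|\widetilde B_s|\stackrel{d}{=}\lambda\sup_{u\le\beta^2-1}|W_u|$) yields \eqref{eq:gl}.

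To conclude I would combine \eqref{eq:gl} with $\P(\tau>\beta^2\lambda^2)\le\P(\tau>\beta^2\lambda^2,\,B^*(\tau)\le\delta\lambda)+\P(B^*(\tau)>\delta\lambda)$ and integrate over $\lambda\in(0,\i)$; using $\int_0^\i\P(\tau>c^2\lambda^2)\,d\lambda=\tfrac1c\E[\sqrt\tau]$ and $\int_0^\i\P(B^*(\tau)>\delta\lambda)\,d\lambda=\tfrac1\delta\E[B^*(\tau)]$ this gives $\tfrac1\beta\E[\sqrt\tau]\le\varepsilon(\delta)\E[\sqrt\tau]+\tfrac1\delta\E[B^*(\tau)]$. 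Fixing $\beta=2$ and then choosing $\delta$ so small that $\varepsilon(\delta)\le\tfrac14$, the $\E[\sqrt\tau]$-terms can be absorbed into the left-hand side (legitimate since $\E[\sqrt\tau]<\i$), giving \eqref{V1} with the universal constant $C=4/\delta$.

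I do not expect a serious obstacle for this qualitative statement: the only point that needs care is that $\tau$ may depend on the Brownian path \emph{after} time $\lambda^2$, which is why \eqref{eq:gl} must be read as an \emph{inclusion of events} — not a conditional computation — so that the independence of $\widetilde B$ from $\F_{\lambda^2}$ can be invoked. The real difficulty, and the whole point of what follows, lies elsewhere: the constant produced above is far from optimal, and pinning down the \emph{sharp} $C$ forces one instead to look for the least $C$ admitting a function $V=V(x,m,t)$ with $V(B_t,B^*(t),t)$ a supermartingale, $V(0,0,0)\le0$, and $V(x,m,t)\ge\sqrt t-Cm$ for all $(x,m,t)$ with $m\ge|x|$. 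Identifying the extremal $V$ is an optimal-stopping problem for the maximum process, which is what leads to the integro-differential free-boundary equation and the non-smooth pasting announced in the abstract.
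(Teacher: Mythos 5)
Your good-$\lambda$ argument is correct, and it is a genuinely different route from what the paper does. In fact the paper never \emph{proves} Theorem \ref{thm:main} from scratch: it takes the qualitative BDG inequality as classical, citing \cite{BeSi15} for the constant $C=\tfrac32$, and the entire machinery (value function $V$, supermartingale characterization, OIDE, non-smooth pasting) is deployed solely to identify the \emph{sharp} constant $\widehat{C}$. Your argument, by contrast, gives a short self-contained proof of the finiteness statement with an explicit (non-optimal) constant $C=4/\delta$.

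The key step worth flagging as genuinely clever is the one you isolate yourself: because $\{\sqrt{\tau}>\lambda\}=\{\tau>\lambda^2\}$, the level crossing for $\sqrt{\tau}$ happens at the \emph{deterministic} time $\lambda^2$, so the good-$\lambda$ inequality can be verified by a plain inclusion of events followed by independence of $\widetilde B_s = B_{\lambda^2+s}-B_{\lambda^2}$ from $\F_{\lambda^2}$. This sidesteps the usual complication in good-$\lambda$ proofs where the first crossing time is itself random. The estimate $|\widetilde B_s|\le 2\delta\lambda$ on $[0,(\beta^2-1)\lambda^2]$, the scaling to reduce to $\varepsilon(\delta)$, and the layer-cake identities $\int_0^\infty\P(\tau>c^2\lambda^2)\,d\lambda=\tfrac1c\E[\sqrt\tau]$, $\int_0^\infty\P(B^*(\tau)>\delta\lambda)\,d\lambda=\tfrac1\delta\E[B^*(\tau)]$ are all correctly handled, and the absorption of $\varepsilon(\delta)\E[\sqrt\tau]$ is legitimate because $\tau$ is bounded. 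What your approach \emph{cannot} do is produce the sharp constant: the good-$\lambda$ scheme is intrinsically lossy, which is precisely why the paper abandons it in favour of the value-function / free-boundary formulation. Your closing paragraph correctly anticipates this and correctly describes the structure (supermartingale $V(t,B_t,B^*_t)$, $V(0,0,0)\le 0$, $V\ge \sqrt t - Cb^*$, leading to the OIDE), so you have the right picture of where the paper's real work lies.
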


Here $(B(t))_{t \geq 0}$ denotes a standard Brownian motion, starting at $B(0)=0.$ By $B^*(t)$ we denote the corresponding running maximum of the absolute value
\begin{align*}
B^*(t):=\sup_{0 \leq u \leq t} |B(u)|.
\end{align*}
It is obvious that the set of constants $C$ which satisfy inequality \eqref{V1} is a closed, unbounded interval in $\R_+$. By the results of \cite{BeSi15} it is known that $C=\frac{3}{2}$ is contained in this set. To the best of our knowledge, this is the smallest constant known in the previous literature. In the present paper we establish the optimal value for this constant.

\begin{theorem} \label{thm:mainoide}
There is an ordinary integro-differential equation (see \eqref{V14a} below) depending on real parameters $C>0$ and $t_0>0$ such that $C$ satisfies \eqref{V1} if and only if there is $t_0$ such that this equation has a well-defined solution.

Numerical solutions of the equation \eqref{V14a} reveal that the smallest such $C$, i.e.~the optimal constant in the Burkholder-Davis-Gundy inequality \eqref{V1}, equals
$$\widehat{C} \approx 1,27267\dots.$$
\end{theorem}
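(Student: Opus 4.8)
The plan is to set up the problem as an optimal stopping problem and then derive the integro-differential equation as the Bellman equation characterizing the optimal stopper's value function.

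The plan is to read \eqref{V1} as an optimal stopping problem and then distil it, through scaling and a free‑boundary analysis, into the announced ordinary integro‑differential equation. Inequality \eqref{V1} holds with a given constant $C$ precisely when $\sup_{\tau}\E[\tau^{1/2}-C\,B^*(\tau)]=0$, the supremum being over bounded stopping times; since $\tau=0$ is admissible this quantity is always $\ge 0$, it is non‑increasing in $C$, it equals $+\infty$ for $C$ below $\big(\E[B^*(1)]\big)^{-1}$ (take a large deterministic time) and tends to $0$ as $C\to\infty$, so the optimal constant is $\widehat C=\inf\{C:\sup_\tau\E[\tau^{1/2}-C\,B^*(\tau)]=0\}$. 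Writing $\tau^{1/2}=\int_0^\tau\frac{dt}{2\sqrt t}$ recasts the problem as stopping with a time‑dependent running reward $\frac1{2\sqrt t}$ and a penalty $C\,dB^*$ for raising the running maximum; I would study its value function $u(t,x,m)$ on the state space $\{(t,B(t),B^*(t)):|x|\le m\}$.

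Next I would exploit the two invariances of the problem. Brownian scaling yields the homogeneity $u(c^2t,cx,cm)=c\,u(t,x,m)$ and the reflection $x\mapsto-x$ gives $u(t,x,m)=u(t,-x,m)$, so $u(t,x,m)=m\,v(t/m^2,x/m)$ for a function $v$ on the strip $\{(s,y):|y|\le1\}$, and the obstacle becomes $v(s,y)\ge\sqrt s-C$, independent of $y$. The structural key is to locate the stopping set: the payoff $g(t,x,m)=\sqrt t-Cm$ satisfies $\partial_t g+\tfrac12\partial_{xx}g+\tfrac1{2\sqrt t}=\tfrac1{\sqrt t}>0$ strictly on $\{|x|<m\}$, i.e.\ it is a strict subsolution there, so it is never optimal to stop in the interior and the stopping set lies in the diagonal $\{|x|=m\}$. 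Hence on $\{|x|<m\}$ the value solves $\partial_t u+\tfrac12\partial_{xx}u=-\tfrac1{2\sqrt t}$ (equivalently $v_s+\tfrac12 v_{yy}=-\tfrac1{2\sqrt s}$ on $|y|<1$); on the diagonal one has the normal‑reflection condition $\partial_m u(t,\pm m,m)=0$ wherever continuation is optimal, which by homogeneity reads $v_y(s,1)=v(s,1)-2s\,v_s(s,1)$, while $u=g$ where stopping is optimal. Because the continue/stop decision on the diagonal depends only on $s=t/m^2$, there is a threshold $t_0$: one continues from the diagonal while $t/m^2<t_0$ and stops once $t/m^2\ge t_0$.

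The central and most delicate step is to collapse this two‑dimensional free‑boundary problem on the strip to an \emph{ordinary} integro‑differential equation for the diagonal trace $f(s):=v(s,1)$. Solving $v_s+\tfrac12 v_{yy}=-\tfrac1{2\sqrt s}$ on $[-1,1]$ with the symmetric, nonlocal boundary behaviour above via the explicit heat kernel, one expresses $v_y(s,1)$ — and, through the reflection identity, $f$ and $f'$ — as an integral transform of $f$ itself, producing the integral term and hence equation \eqref{V14a} carrying the parameters $C$ and $t_0$ (the free‑boundary threshold, from which the equation is launched with the value‑matching datum $f(t_0)=\sqrt{t_0}-C$). Equivalently one argues probabilistically along the diagonal: the value at a diagonal state equals the expectation, over the sign and the (random) time of the next return to the diagonal, of the value there plus the reward accrued meanwhile, which is directly an integral equation, and differentiating in $s$ supplies the differential part. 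To close the equivalence in both directions: if \eqref{V14a} has a well‑defined solution for some $t_0$ — one staying above the obstacle with the correct behaviour as $s\to0$ and $s\to\infty$, so that the reconstructed $v$ and thence $u$ is a genuine supersolution with $u(0,0,0)=0$ — then Itô's formula and optional stopping applied to $u(t,B(t),B^*(t))+\int_0^t\frac{ds}{2\sqrt s}$ give $\E[\tau^{1/2}-C\,B^*(\tau)]\le u(0,0,0)=0$ for every bounded $\tau$, so $C$ satisfies \eqref{V1}; conversely, if $C$ satisfies \eqref{V1} the optimal‑stopping value function is finite and, by the regularity of value functions in maximum‑process problems, restricts to a well‑defined solution of \eqref{V14a} for the corresponding $t_0$. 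Taking the infimum over admissible $C$ then identifies $\widehat C$ as the critical value at which solvability of \eqref{V14a} breaks down, and at that value the solution pastes onto the obstacle only continuously, not in a $C^1$ manner — the advertised non‑smooth pasting. The numerical part of the statement, $\widehat C\approx1{,}27267\dots$, is obtained by integrating \eqref{V14a} under a bisection in $(C,t_0)$ and detecting this threshold, and rests on numerical evidence rather than a closed argument.

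I expect the genuine obstacle to be precisely the collapse of the strip problem to a one‑variable equation together with the exact specification of ``well‑defined solution'': one must isolate which side conditions (sign, growth at $s\to0$ and $s\to\infty$, the behaviour at the free boundary $t_0$) render solvability of \eqref{V14a} equivalent to the value being $0$, and verify that the heat‑kernel/excursion computation truly yields an \emph{ordinary} equation rather than a disguised PDE. The non‑smooth pasting at $\widehat C$ is what makes this subtle, since the usual smooth‑fit principle that would otherwise determine $t_0$ fails exactly at the optimum; handling the critical regime — where the free boundary degenerates — is where most of the technical work will lie.
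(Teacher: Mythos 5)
Your proposal takes essentially the same approach as the paper: set up the optimal stopping problem with value function $V(t,b,b^*) = \sup_\tau \E^{(t,b,b^*)}[\tau^{1/2}-CB^*(\tau)]$, use Brownian scaling to reduce to a one-dimensional trace on the diagonal $\{|b|=b^*\}$, locate the stopping set as a threshold $t/(b^*)^2 \geq t_0$, derive the OIDE by computing the boundary derivatives $V_{b^*}$ and $V_b$ (the paper does this via the hitting-time density $f^h$ and its limit $g$, which is the probabilistic route you mention as the alternative; the heat-kernel route is equivalent), and close the equivalence in both directions via a supermartingale verification (OIDE solution $\Rightarrow$ \eqref{V1}) and via the regularity of the value function (\eqref{V1} $\Rightarrow$ OIDE solution), with the critical value $\widehat C$ pinned down only numerically. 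One internal inconsistency worth flagging: you simultaneously impose the inhomogeneous PDE $v_s+\tfrac12 v_{yy}=-\tfrac1{2\sqrt s}$ and the obstacle $v \geq \sqrt s - C$, which double-counts the $\sqrt t$ contribution; the value function as you define it (equivalently the paper's $V$) satisfies the homogeneous heat equation $V_t+\tfrac12 V_{bb}=0$ in the interior with obstacle $\sqrt t - Cm$, and the source term $-\tfrac1{2\sqrt s}$ only appears if one subtracts $\sqrt t$ from $V$, in which case the obstacle becomes $-Cm$. This does not affect the conclusion but should be corrected before the derivation of \eqref{V14a} is carried out.
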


The paper is organised as follows. As usual in stochastic control theory, we first introduce the \emph{value function} of the \emph{optimal stopping} problem which corresponds to the inequality \eqref{V1}. After some structural facts about the stopping problem we turn to some analytic properties of the value function in Section \ref{sec:valuecont}. We deduce the OIDE (ordinary integro-differential equation) which is 
referred to in Theorem \ref{thm:mainoide}. The subsequent section is devoted to properties of solutions to the
fundamental OIDE \eqref{V14a} which are needed to identify these solutions with the value function of the stopping problem in Section \ref{sec:synth}.

The critical $\widehat{t}_0 > 0$ associated to the optimal constant $\widehat{C}$ via \eqref{V14a} below also turns out to
be of somewhat independent interest: if $\rho$ denotes the first moment, say after $t=1$, when $t$ is bigger than
$cB^*(t)^2$, then $\E[\rho^{\frac{1}{2}}]$ is finite or infinite depending on whether $c$ is smaller or bigger than $\widehat{t}_0$
(Proposition \ref{prop:infexp} and \ref{prop:finexp}). In Section 6 we state a pointwise version of the BDG inequalities and in Section 7 we briefly
discuss the case of general $0<p<2$ without entering into a numerical analysis. Finally, in Section 8 we discuss the fact
why the constant $\widehat{C} = \sqrt{3}$ which was established by D. Burkholder \cite{Bu02} as the optimal constant for
\eqref{V1} in the case of martingales which are not necessarily continuous, is different from the present constant
$\widehat{C}=1,27267\dots$ which holds true for continuous processes. We relate this discrepancy with a certain
lack of concavity of the value function.

\section{The Value Function of an Optimal Stopping Problem} \label{sec:value}

Fix a constant $C>0.$ Following a well-known path in optimal control theory we define the value function 
\begin{align}\label{V4}
V(t,b,b^*):=\sup_{\tau \in \mathcal{T}(t)} \mathbb{E}^{(t,b,b^*)} [\tau^{\frac{1}{2}}-CB^*(\tau)],
\end{align} 
where $\mathcal{T}(t)$ denotes the set of bounded stopping times $\tau \geq t$ and $\mathbb{E}^{(t, b, b^*)}$ denotes the expectation conditionally on starting the Brownian motion $B$ at time $t$ with the values $B_t=b, B^*_t=b^*.$ The domain of definition of $V$ is
\begin{align}\label{V4a}
D=\{(t,b,b^*): \quad 0 \leq t < \infty, \quad 0 \leq |b| \leq b^* < \infty \}.
\end{align}

Equivalently we can write
\begin{align}\label{eq:Vchar}
V(t,b,b^*):=\sup_{\tau \in \mathcal{T}} \mathbb{E}[\sqrt{t+\tau}-C(b^* \vee (b+B(\tau))^*],
\end{align}
which follows from the strong Markov property and stationarity of increments of Brownian motion.

Denote by $\widehat{C}$ the infimum of $C>0$ such that \eqref{V1} holds true. Clearly $\widehat{C}$ still satisfies \eqref{V1}. 
If $C < \widehat{C}$ then $V(t, b, b^*) \equiv \infty$, otherwise we have:

\begin{lemma}\label{lem:valueprops}
Let $C \geq \widehat{C}$ then $V$ defined via \eqref{V4} is
\begin{enumerate}[(i)]
\item continuous,
\item finite-valued, and
\item $t \mapsto V(t,b,b^*) - \sqrt{t}$ is decreasing for fixed $|b| \leq b^*$.
\end{enumerate}
In particular (ii) follows from the bounds
\begin{align}\label{p2}
\sqrt{t} + Cb^* \geq V(t,b,b^*) \geq \sqrt{t} - Cb^*.
\end{align}
\end{lemma}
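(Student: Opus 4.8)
The plan is to read off all three assertions directly from the representation \eqref{eq:Vchar}, where $V$ is a supremum over the \emph{same} class $\mathcal T$ of bounded stopping times of expectations depending continuously on $(t,b,b^*)$ with a modulus of continuity uniform in $\tau$; only the continuity in $t$ will require an idea.

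First I would prove the bounds \eqref{p2}, which give (ii). The lower bound is trivial: the choice $\tau\equiv0$ in \eqref{eq:Vchar} gives $V(t,b,b^*)\ge\sqrt t-C\bigl(b^*\vee|b|\bigr)=\sqrt t-Cb^*$. For the upper bound I would combine the subadditivity $\sqrt{t+\tau}\le\sqrt t+\sqrt\tau$ with the pathwise estimate
\begin{align*}
B^*(\tau)=\sup_{u\le\tau}|B(u)|\le\sup_{u\le\tau}|b+B(u)|+|b|\le\bigl(b^*\vee(b+B(\tau))^*\bigr)+b^*,
\end{align*}
which gives, for every $\tau\in\mathcal T$,
\begin{align*}
\E\bigl[\sqrt{t+\tau}-C(b^*\vee(b+B(\tau))^*)\bigr]\le\sqrt t+\E[\sqrt\tau]-C\,\E[B^*(\tau)]+Cb^*.
\end{align*}
Since $C\ge\widehat C$ and the admissible constants in \eqref{V1} form the interval $[\widehat C,\infty)$, one has $\E[\sqrt\tau]\le C\,\E[B^*(\tau)]$, so the right-hand side is at most $\sqrt t+Cb^*$; taking the supremum over $\tau$ finishes the bound. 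This is the only step that invokes the BDG inequality itself, i.e.\ the standing hypothesis $C\ge\widehat C$.

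Assertion (iii) is then pathwise: for $0\le t_1\le t_2$ and any $\tau\ge0$ the map $s\mapsto\sqrt{s+\tau}-\sqrt s$ is nonincreasing, so $\sqrt{t_2+\tau}-\sqrt{t_2}\le\sqrt{t_1+\tau}-\sqrt{t_1}$; subtracting the $t$-independent term $C\bigl(b^*\vee(b+B(\tau))^*\bigr)$, taking expectations, and then the supremum over $\tau$ in \eqref{eq:Vchar} yields $V(t_2,b,b^*)-\sqrt{t_2}\le V(t_1,b,b^*)-\sqrt{t_1}$.

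Finally, for (i) I would establish two uniform modulus-of-continuity estimates and glue them by a triangle inequality. In the space variables, for fixed $t$ the map $(b,b^*)\mapsto b^*\vee(b+B(\tau))^*$ is, pathwise and uniformly in $\tau$, $1$-Lipschitz for the norm $|b_1-b_2|+|b_1^*-b_2^*|$, since $b^*\mapsto b^*$ and $b\mapsto\sup_{u\le\tau}|b+B(u)|$ are $1$-Lipschitz and $x\vee y$ is $1$-Lipschitz; as a supremum of functions sharing a modulus of continuity inherits that modulus (here using finiteness from (ii)), one gets $|V(t,b_1,b_1^*)-V(t,b_2,b_2^*)|\le C\bigl(|b_1-b_2|+|b_1^*-b_2^*|\bigr)$. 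In the time variable the crucial observation is that, uniformly in $\tau\ge0$ and for $0\le t_1\le t_2$,
\begin{align*}
0\le\sqrt{t_2+\tau}-\sqrt{t_1+\tau}=\int_{t_1}^{t_2}\frac{ds}{2\sqrt{s+\tau}}\le\int_{t_1}^{t_2}\frac{ds}{2\sqrt s}=\sqrt{t_2}-\sqrt{t_1},
\end{align*}
so for each fixed $\tau$ the function $t\mapsto\E[\sqrt{t+\tau}-C(b^*\vee(b+B(\tau))^*)]$ has modulus of continuity $|\sqrt{t_1}-\sqrt{t_2}|$, hence so does the supremum, i.e.\ $|V(t_1,b,b^*)-V(t_2,b,b^*)|\le|\sqrt{t_1}-\sqrt{t_2}|$. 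A triangle inequality through the point $(t_1,b_2,b_2^*)\in D$ then gives $|V(t_1,b_1,b_1^*)-V(t_2,b_2,b_2^*)|\le|\sqrt{t_1}-\sqrt{t_2}|+C\bigl(|b_1-b_2|+|b_1^*-b_2^*|\bigr)$, i.e.\ (uniform) continuity on $D$. The one genuinely non-mechanical point is the temporal estimate: one must notice that the monotone ``bad'' direction of $t\mapsto V(t,b,b^*)-\sqrt t$ from (iii) is nonetheless controlled, uniformly in the stopping time, by $\sqrt{t_2}-\sqrt{t_1}$ via the integral identity above. This argument uses no concavity of $V$, which is fitting in view of the non-smooth pasting the paper later exhibits.
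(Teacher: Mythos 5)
Your proposal is correct. For the bounds \eqref{p2} (hence (ii)) and for (iii) it follows essentially the paper's own argument: the lower bound from the trivial stopping time, the upper bound from $\sqrt{t+\tau}\le\sqrt t+\sqrt\tau$, the pathwise comparison of $(b+B(\tau))^*$ with $B^*(\tau)$, and the standing hypothesis $C\ge\widehat C$ to kill the residual $\E[\sqrt\tau]-C\E[B^*(\tau)]$; and (iii) from the fact that $s\mapsto\sqrt{s+\tau}-\sqrt s$ is nonincreasing, exactly as the paper does via concavity of $\sqrt{\cdot}$.

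Where you genuinely diverge is in (i): the paper simply cites Sections~7 and~9.2 of Peskir--Shiryaev, whereas you give a short, self-contained proof by exhibiting an explicit modulus of continuity that is \emph{uniform over} $\tau\in\mathcal T$ in the representation \eqref{eq:Vchar} and is then inherited by the supremum via $|\sup_\tau a_\tau-\sup_\tau b_\tau|\le\sup_\tau|a_\tau-b_\tau|$ (finiteness from (ii) being needed here). The two ingredients — pathwise $C$-Lipschitz dependence of $b^*\vee(b+B(\tau))^*$ on $(b,b^*)$, and the inequality $0\le\sqrt{t_2+\tau}-\sqrt{t_1+\tau}\le\sqrt{t_2}-\sqrt{t_1}$ uniformly in $\tau$ — are both correct, and the latter could even be read off directly from (iii) together with the obvious monotonicity of $V$ in $t$, so your integral identity is a mild overkill but certainly sound. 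This buys a fully elementary, quantitative proof of continuity
\[
|V(t_1,b_1,b_1^*)-V(t_2,b_2,b_2^*)|\le|\sqrt{t_1}-\sqrt{t_2}|+C\bigl(|b_1-b_2|+|b_1^*-b_2^*|\bigr),
\]
which is stronger than what the paper records, at the cost of relying on the particular product form of the reward rather than on the general optimal-stopping machinery the citation provides.
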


\begin{proof}
The lower bound of $V$ follows from choosing the stopping time $t \in \mathcal{T}(t)$. For the upper bound observe that
we can estimate for an arbitrary $\tau \in \mathcal{T}$
\begin{align*}
\sqrt{t+\tau} - C(b^* \vee (b+B(\tau))^* &\leq \sqrt{t} + \sqrt{\tau} - C(b+B(\tau))^* \\
&\leq \sqrt{t} + \sqrt{\tau} - CB(\tau)^* + C|b| \\
&\leq \sqrt{t} + Cb^* + \sqrt{\tau} - CB(\tau)^*.
\end{align*}
Taking expectations we get the upper bound for $V$ from the representation in \eqref{eq:Vchar}.

Next observe that for $\tau \in \T$ we have for $t<t'$
\[\sqrt{t+\tau} - C(b^* \vee (b+B(\tau))^* - \sqrt{t} \leq \sqrt{t'+\tau} - C(b^* \vee (b+B(\tau))^* - \sqrt{t'}\]
by concavity of the square root. Now (iii) follows by taking expectations suprema.

For (i), please refer to Sections 7 and 9.2 in \cite{PeSh00}.
\end{proof}

To exclude the trivial case, we assume in the sequel that $C \geq \widehat{C}.$ 

For fixed $C$, the \emph{stopping region} $S \subseteq D$ and the \emph{non-stopping region} $NS \subseteq D$ are defined by
\begin{align}\label{p2.1}
S:=\{V=t^{\frac{1}{2}}- Cb^*\}, \quad NS:=\{V >t^{\frac{1}{2}}-Cb^*\}.
\end{align}

To characterize the stopping region $S$ first note that it is certainly not a good idea to stop when $|B(t)| < B^*(t)$.

\begin{lemma}\label{lem:hollow}
Let $(t,b,b^*) \in D$ with $|b| < b^*.$ Then $(t,b,b^*) \in NS.$
\end{lemma}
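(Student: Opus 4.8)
The plan is to exploit the fact that, when $|b|<b^*$, the running maximum $b^*$ stays frozen for a short while, during which we are free to collect the convex payoff $\sqrt{\,\cdot\,}$ at no cost to the $-Cb^*$ term. Concretely, fix $(t,b,b^*)\in D$ with $|b|<b^*$, and let $\sigma$ be the first time the Brownian motion started at $b$ exits the open interval $(-b^*,b^*)$; since $|b|<b^*$ we have $\sigma>0$ almost surely, and $\sigma$ is a.s.\ finite with all moments finite (it is dominated by an exit time of a bounded interval). On the stochastic interval $[0,\sigma)$ the process $(b+B(u))^*$ equals $b^*$, so stopping at time $s\wedge\sigma$ for a small deterministic $s>0$ gives, from the representation \eqref{eq:Vchar},
\[
V(t,b,b^*)\;\geq\;\E\big[\sqrt{t+s\wedge\sigma}\big]-Cb^*.
\]
Since $\P[\sigma>0]=1$ and $s\mapsto\sqrt{t+s}$ is strictly increasing, the right-hand side is strictly larger than $\sqrt{t}-Cb^*$ for every small enough $s>0$; indeed $\E[\sqrt{t+s\wedge\sigma}]\geq \sqrt{t}+\tfrac12\E[(s\wedge\sigma)\,(t+s)^{-1/2}]\cdot\mathbf 1$, which is $>\sqrt t$ as soon as $\P[\sigma>0]>0$. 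Hence $V(t,b,b^*)>\sqrt{t}-Cb^*$, i.e.\ $(t,b,b^*)\in NS$.

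The only point that needs a little care is the lower estimate $\sqrt{t+r}\geq\sqrt{t}+\tfrac12 r(t+r)^{-1/2}$ used above, which follows from concavity of the square root (the chord from $t$ to $t+r$ lies below the tangent at $t+r$); one then takes $r=s\wedge\sigma$ and expectations, and uses $\E[s\wedge\sigma]>0$. Alternatively, and even more simply, by Jensen's inequality applied to the concave function $\sqrt{\,\cdot\,}$ one only needs $\E[s\wedge\sigma]>0$ together with strict concavity to conclude $\E[\sqrt{t+s\wedge\sigma}]>\sqrt{t+\E[s\wedge\sigma]}>\sqrt t$ when $s\wedge\sigma$ is non-degenerate, but since for small $s$ the variable $s\wedge\sigma$ may in fact equal $s$ with positive probability, the direct tangent-line bound is cleanest. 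I expect no real obstacle here: the mild point is simply to make sure the chosen stopping time $s\wedge\sigma$ lies in $\mathcal T$ (it is bounded by $s$) and that $b^*$ is genuinely not increased before $\sigma$, which is exactly the definition of $\sigma$.

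Remark: this argument in fact shows the slightly stronger statement that $V$ is strictly above the stopping payoff on the whole \emph{interior} region $\{|b|<b^*\}$, with the same proof; only the boundary case $|b|=b^*$ is left open, and that is where the genuine free-boundary analysis of the later sections is required.
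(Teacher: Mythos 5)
Your proposal is correct and follows essentially the same route as the paper: the paper's proof is the one-line hint ``Consider the first exit time of the interval $[-b^*,b^*]$,'' and you have simply spelled out the details (truncating that exit time to make it a bounded stopping time, noting that $b^*\vee(b+B(\cdot))^*=b^*$ up to that exit, and using strict monotonicity of $u\mapsto\sqrt{t+u}$ to get a strict inequality).
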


\begin{proof}
Consider the first exit time of the interval $[-b^*,b^*]$.
\end{proof}

Next we observe a useful scaling property of $V$ (compare Burkholder \cite{Bu02}).

\begin{lemma}\label{lem:scale}
For $a > 0$ and $(t,b,b^*) \in D$, we have
\begin{align}\label{p3}
V(a^2t,ab,ab^*) = aV(t,b,b^*).
\end{align}
\end{lemma}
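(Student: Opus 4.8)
The plan is to exploit the Brownian scaling property $B(a^2 s) \stackrel{d}{=} a B(s)$ together with the explicit $\tfrac12$-homogeneity of the functional $\sqrt{t} - C b^*$ under the dilation $(t,b,b^*)\mapsto(a^2 t, a b, a b^*)$. First I would use the representation \eqref{eq:Vchar}, which expresses $V$ in terms of a fixed Brownian motion $B$ and the set $\mathcal{T}$ of bounded stopping times that does not depend on the starting point $(t,b,b^*)$. Then, given a bounded stopping time $\tau$, I would introduce the rescaled process $\widetilde{B}(s):=a^{-1}B(a^2 s)$, which is again a standard Brownian motion, and the stopping time $\widetilde{\tau}:=a^{-2}\tau$ for the filtration generated by $\widetilde B$; this $\widetilde\tau$ is again bounded, and the map $\tau\mapsto\widetilde\tau$ is a bijection of $\mathcal{T}$ onto itself.

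Next I would compute, for the starting data $(a^2 t, a b, a b^*)$,
\begin{align*}
\sqrt{a^2 t + \tau} - C\bigl(a b^* \vee (a b + B(\tau))^*\bigr)
&= a\sqrt{t + a^{-2}\tau} - C a\bigl(b^* \vee (b + a^{-1}B(\tau))^*\bigr)\\
&= a\Bigl(\sqrt{t+\widetilde\tau} - C\bigl(b^* \vee (b + \widetilde B(\widetilde\tau))^*\bigr)\Bigr),
\end{align*}
where in the running-maximum term I have used that $a^{-1}\sup_{0\le u\le \tau}|ab + B(u)| = \sup_{0\le u\le\tau}|b + a^{-1}B(u)|$ and the time-change $s=a^{-2}u$ turns this into $\sup_{0\le s\le\widetilde\tau}|b+\widetilde B(s)| = (b+\widetilde B(\widetilde\tau))^*$. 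Taking expectations and then the supremum over $\tau\in\mathcal{T}$ — equivalently over $\widetilde\tau\in\mathcal{T}$ — yields $V(a^2 t, a b, a b^*) = a\, V(t,b,b^*)$.

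There is no serious obstacle here; the only point requiring a little care is the bookkeeping for the running maximum, namely checking that the initial value $b^*$ is scaled consistently with the path maximum under the dilation, and that the supremum over the continuous-time interval is unaffected by the deterministic time-change $s = a^{-2}u$. Since all quantities are finite for $C\ge\widehat C$ by Lemma \ref{lem:valueprops}, the manipulation of suprema and expectations is justified without further integrability arguments. One could alternatively phrase the proof directly from \eqref{V4} using that $(B(t+a^2 s) - B(t))_{s\ge0}$, rescaled by $a^{-1}$, is a Brownian motion independent of $\mathcal{F}_t$, but going through \eqref{eq:Vchar} makes the stopping-time bijection cleanest.
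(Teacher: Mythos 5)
Your proof is correct and takes essentially the same route as the paper: the paper's proof simply invokes Brownian scaling ($(a^{-1}B_{a^2t})_{t\ge0}$ is again a standard Brownian motion) and the resulting bijection $\tau\leftrightarrow a^{-2}\tau$ of bounded stopping times, which is exactly what you spell out in detail using the representation \eqref{eq:Vchar}.
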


\begin{proof}
This follows directly from the scaling property of Brownian motion: if $(B_t)_{t\geq 0}$ is a standard
Brownian motion, then $(a^{-1}B_{a^2t})_{t \geq 0}$ again is a standard Brownian motion. Also, a random time
$a^2\tau$ is a stopping time for the first process if and only if $\tau$ is a stopping time for the
second process.
\end{proof}

This allows us to derive the following Lemma where the first part is a direct consequence of Lemmas \ref{lem:valueprops} (iii) and
\ref{lem:scale} and the second part is technical and deferred to the appendix in Lemma \ref{lem:t0int}.
\begin{lemma}\label{lem:stopreg}
Let $0 \leq t \leq t'$ and $b \in \mathbb{R}$. Then $(t, b, |b|) \in S$ implies $(t', b, |b|) \in S$.

Hence, for fixed $C \geq \widehat{C}$ there is a smallest $t_0 \in [0, \infty]$ such that $(t,b,|b|) \in S$ if and only if 
\begin{align}\label{V6}
\frac{t}{|b|^2} \geq t_0.
\end{align}
In fact, we have $t_0 \in \,(0,\infty).$
\end{lemma}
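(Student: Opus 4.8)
The first assertion follows exactly as the excerpt indicates. Suppose $(t,b,|b|) \in S$, i.e.\ $V(t,b,|b|) = \sqrt{t} - C|b|$. By Lemma \ref{lem:valueprops} (iii), the map $s \mapsto V(s,b,|b|) - \sqrt{s}$ is decreasing, so for $t' \geq t$ we have $V(t',b,|b|) - \sqrt{t'} \leq V(t,b,|b|) - \sqrt{t} = -C|b|$, hence $V(t',b,|b|) \leq \sqrt{t'} - C|b|$. The reverse inequality is always true since stopping immediately is admissible (it is the lower bound in Lemma \ref{lem:valueprops}), so $(t',b,|b|) \in S$. Next, the scaling identity \eqref{p3} of Lemma \ref{lem:scale} shows that membership in $S$ of a point on the diagonal $b^* = |b|$ depends only on the ratio $t/|b|^2$: indeed $(t,b,|b|) \in S$ iff $V(t,b,|b|) = \sqrt{t} - C|b|$ iff, setting $a = 1/|b|$, $V(t/|b|^2, \pm 1, 1) = \sqrt{t/|b|^2} - C$, which does not depend on the sign of $b$ by the obvious symmetry $V(t,b,b^*) = V(t,-b,b^*)$. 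Combining this with the monotonicity in $t$ just established, the set of ratios $t/|b|^2$ for which the diagonal point lies in $S$ is an up-set in $[0,\infty)$, hence of the form $[t_0,\infty)$ for a well-defined $t_0 \in [0,\infty]$ (taking $t_0$ to be its infimum, with the convention $t_0 = \infty$ if the set is empty); the endpoint is included because $S$ is closed, being the set where the continuous function $V - (\sqrt{t} - Cb^*)$ vanishes.

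It remains to prove $t_0 \in (0,\infty)$, which I expect to be the main obstacle and which the authors themselves defer to Lemma \ref{lem:t0int} in the appendix. For $t_0 > 0$: I would show that for $t$ small relative to $|b|^2$ it is strictly advantageous to continue. Fix $(t,b,|b|)$ with $b > 0$ (say) and consider a small perturbation — e.g.\ run the Brownian motion for a short additional time $h$, or better, use a first-exit time from a thin interval around $b$. One wants to exhibit a stopping time $\tau$ with $\E[\sqrt{t+\tau} - C(b^* \vee (b + B(\tau))^*)] > \sqrt{t} - Cb^*$. Since we start at the maximum, any small downward move does not change the running maximum while it increases $t$, and the gain $\sqrt{t + h} - \sqrt{t} \approx h/(2\sqrt{t})$ blows up as $t \to 0$, whereas the potential cost from new maxima is of order $C\sqrt{h}$ times a probability that can be controlled; for $t$ small enough the gain dominates. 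Quantitatively, stopping at $\tau = $ first exit of $B$ from $(b - \varepsilon, b + \varepsilon)$ started at $b$: with probability $1/2$ the max is unchanged and $t$ has increased by roughly $\varepsilon^2$, with probability $1/2$ the max increases by $\varepsilon$; the net expected change is $\approx \tfrac12\big(\sqrt{t+\varepsilon^2}-\sqrt t\big) + \tfrac12\big(\sqrt{t+\varepsilon^2}-\sqrt t - C\varepsilon\big) = \sqrt{t+\varepsilon^2}-\sqrt t - \tfrac{C\varepsilon}{2}$, which is positive once $\varepsilon^2/(2\sqrt t) > C\varepsilon/2$, i.e.\ $\varepsilon > C\sqrt t$; such an $\varepsilon < $ anything is available as soon as $t$ is small. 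This shows $(t,b,|b|) \in NS$ for $t/|b|^2$ below a positive threshold, hence $t_0 > 0$.

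For $t_0 < \infty$: here one uses that the constant $C \geq \widehat C$ actually makes \eqref{V1} hold, so $V(0,0,0) \leq 0 < \infty$, and more generally $V$ is finite by Lemma \ref{lem:valueprops} (ii). I would argue that for $t$ large relative to $|b|^2$, continuing cannot help: intuitively, once $\sqrt{t}$ is already large, any continuation forces $B^*$ to grow (by the BDG inequality itself, a large time cost must be paid for by a large maximum), and the concavity of $\sqrt{\cdot}$ means the marginal time-gain $\sqrt{t+\tau} - \sqrt{t} \leq \sqrt{\tau}$ is outweighed by the $C$-penalty on the new maximum. Concretely, from \eqref{eq:Vchar}, $V(t,b,|b|) - (\sqrt t - C|b|) = \sup_{\tau}\E[\sqrt{t+\tau} - \sqrt t - C((|b| \vee (b+B(\tau))^*) - |b|)] \leq \sup_\tau \E[\tfrac{\sqrt\tau \cdot \sqrt t}{\sqrt t} \wedge \text{(something)} - C\,(\text{new max increment})]$; one makes this rigorous by splitting on whether $\tau$ is small or large and using that for large $t$ the bound $\sqrt{t+\tau}-\sqrt t \le \tfrac{\tau}{2\sqrt t}$ is small unless $\tau$ is large, while large $\tau$ forces, via \eqref{V1} applied to the shifted Brownian motion, a compensating expected maximum of order $\sqrt\tau$. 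Balancing these gives $V(t,b,|b|) = \sqrt t - C|b|$ for $t/|b|^2$ large, so $t_0 < \infty$. I would present the detailed estimates for both bounds in the appendix as Lemma \ref{lem:t0int}.
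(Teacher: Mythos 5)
The first two assertions are handled correctly and essentially match the paper: monotonicity of $t \mapsto V(t,b,|b|)-\sqrt{t}$ from Lemma \ref{lem:valueprops}(iii), combined with the scaling Lemma \ref{lem:scale} to reduce to the ratio $t/|b|^2$, plus continuity of $V$ for closedness of $S$.

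The sketch for $t_0>0$ has a genuine gap. You replace $\E[\sqrt{t+\tau}]$ by $\sqrt{t+\E[\tau]}=\sqrt{t+\varepsilon^2}$, but Jensen's inequality gives $\E[\sqrt{t+\tau}]\le\sqrt{t+\E[\tau]}$ --- so $\sqrt{t+\varepsilon^2}-\sqrt t-\tfrac{C\varepsilon}{2}>0$ is an \emph{over}estimate of the expected gain and its positivity proves nothing. (There is also a sign slip: $\sqrt{t+\varepsilon^2}-\sqrt t\le\varepsilon^2/(2\sqrt t)$, so your stated condition $\varepsilon>C\sqrt t$ does not imply what you claim, though the exact quantity is indeed positive for small $t$ since $C<2$.) If you instead bound below by $\E[\sqrt{\tau}]=\varepsilon\,\E[\sqrt{\tau_1}]$ (with $\tau_1$ the exit time from $(-1,1)$), the symmetric corridor gives a gain-to-cost ratio that is a \emph{fixed} constant $2\E[\sqrt{\tau_1}]/C$; this exceeds $1$ only for $C$ below a fixed threshold, whereas the lemma asserts $t_0>0$ for every $C\ge\widehat C$. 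The paper circumvents this by using an \emph{asymmetric} corridor: starting on the diagonal at $(t,1,1)$ and exiting at $|B|=1+h$, so the starting point is distance $h$ from the near wall but distance $2+h$ from the far wall. Then $\E[\sigma^h]=h(2+h)$ is linear in $h$, yet $\E[\sqrt{\sigma^h}]/h\to\infty$ as $h\searrow 0$ (Lemma \ref{lem:superlinear}), so the gain beats the cost $Ch$ for \emph{any} $C$ once $t,h$ are small. That superlinearity, not a symmetric small-exit argument, is the idea you are missing.

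Your outline for $t_0<\infty$ is too vague to assess: the statement that ``large $\tau$ forces, via \eqref{V1}, a compensating expected maximum of order $\sqrt\tau$'' is in the right spirit, but \eqref{V1} gives only the inequality $\E[\sqrt\tau]\le C\,\E[B^*(\tau)]$ with no margin, so nothing quantitative follows without more structure. The paper's argument (Lemma \ref{lem:t0int}(2)) assumes $t_0=\infty$, introduces the \emph{strictly decreasing} spread $\alpha(t):=V(t,1,1)-(\sqrt t-C)$ (Lemma \ref{lem:Vdec}), fixes $h>\alpha(1)/(C-\alpha(1))$, and then iterates the scaling relation and the supermartingale step across the corridor $\sigma^h$ to produce, for $t$ large, a strict self-contradiction $V(t,1,1)<V(t,1,1)$. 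The strict monotonicity of the spread is the essential extra ingredient that your sketch does not supply.
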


The next result is a standard result in optimal control theory and also intuitively rather obvious. Again, the proof is
deferred to the appendix.

\begin{lemma}\label{lem:valuemartingale}
Suppose $C \geq \widehat{C}$ and let $(t, b, b^*)$ be in the non-stop region $NS$. Consider a Brownian motion $(B(u))_{t \leq u}$ starting at time $t$ conditionally on $B(t)=b$ and $B^*(t)=b^*.$ Let $\tau$ be the first hitting time of the stopping region $S$, i.e.
\begin{align}\label{L1}
\tau = \inf \{u \geq t: (u, B(u), B^*(u)) \in S\}
\end{align}
Then the value process stopped at time $\tau$
\begin{align} \label{form:stopproc}
M(u)^\tau:=V(u \wedge \tau, B(u \wedge \tau), B^*(u \wedge \tau)), \qquad u \geq t,
\end{align}
is a martingale.

The unstopped value process 
\begin{equation} \label{form:unstopproc}
M(u) := V(u,B(u),B^*(u)), \qquad u \geq t,
\end{equation} 
still is a supermartingale.
\end{lemma}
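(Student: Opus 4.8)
The two assertions together are exactly the dynamic programming principle for the stopping problem \eqref{V4}, and I would derive them from the super-mean-value property of $V$. Throughout write $X_u:=(u,B(u),B^*(u))$ for $u\ge t$, where $B^*(u)=b^*\vee\sup_{t\le v\le u}|B(v)|$; this is a strong Markov process, since for $s\le v$ the quantity $\sup_{s\le w\le v}|B(w)|$, and hence $X_v$, depends on $\mathcal F_s$ only through $X_s$ and the Brownian increments after time $s$. Writing $g(t,b,b^*):=\sqrt t-Cb^*$ for the gain, \eqref{p2} gives $|V(X_u)|\le\sqrt u+CB^*(u)$ and $|g(X_u)|\le\sqrt u+CB^*(u)$, so, as $\E[B^*(u)]<\infty$, every random variable occurring below is integrable.

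\emph{Step 1 (supermartingale property of $M$).} The first goal is the super-mean-value inequality
\[
V(t,b,b^*)\ \ge\ \E^{(t,b,b^*)}\big[V(X_\sigma)\big],\qquad\sigma\in\mathcal T(t),
\]
which expresses that ``run to $\sigma$, then continue optimally'' is an admissible strategy from $(t,b,b^*)$ and so cannot beat $V$. I would obtain it via the finite-horizon problems: for $T<\infty$ let $V_T$ be the value in \eqref{V4} with the extra constraint $\sigma\le T$; on $[0,T]$ the gain is bounded by $\sqrt T+CB^*(T)$, so the classical theory (\cite{PeSh00}, Sections 2 and 7) gives that $V_T$ is the smallest superharmonic majorant of $g$ and that $V_T(X_{u\wedge T})$ is a supermartingale, in particular $V_T(t,b,b^*)\ge\E^{(t,b,b^*)}[V_T(X_\sigma)]$ for bounded $\sigma\le T$. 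Since $V_T\uparrow V$ as $T\to\infty$ and the bounds \eqref{p2} are uniform in $T$, monotone convergence yields the displayed inequality. Applying it at the random point $X_s$ with the deterministic time $\sigma=u\ge s$ and using the Markov property of $X$ gives $\E[V(X_u)\mid\mathcal F_s]\le V(X_s)$ a.s.; this is the supermartingale claim for $M(u)=V(X_u)$, $u\ge t$, and then $M(u)^\tau=M(u\wedge\tau)$ is also a supermartingale.

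\emph{Step 2 (martingale property up to $\tau$).} Since $V$ is continuous, $S=\{V=g\}$ is closed and $\tau$ in \eqref{L1} is a stopping time. On $NS$ one has $V>g$, on $S$ one has $V=g$, and by Step 1 continuation from a point of $S$ cannot exceed $V$ there; the standard optimal stopping theory (\cite{PeSh00}; or, once more, a limiting argument in the finite-horizon problems) then shows $\tau$ is optimal, in the sense that $\E[g(X_{\tau\wedge u})]\to V(t,b,b^*)$ as $u\to\infty$. Now $u\mapsto\E[M(u)^\tau]=\E[V(X_{\tau\wedge u})]$ is non-increasing and bounded above by $V(t,b,b^*)$ by Step 1, while $\E[V(X_{\tau\wedge u})]\ge\E[g(X_{\tau\wedge u})]\to V(t,b,b^*)$; hence $\E[M(u)^\tau]\equiv V(t,b,b^*)$. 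A supermartingale with constant expectation is a martingale (since $M(s)^\tau-\E[M(u)^\tau\mid\mathcal F_s]\ge0$ then has zero mean, so vanishes a.s.), which proves the first assertion; the second assertion is Step 1.

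\emph{Main obstacle.} The substance is concentrated in the two limiting passages, namely from finite to infinite horizon and the optimality of $\tau$, because the supremum in \eqref{V4} ranges only over \emph{bounded} stopping times, $\tau$ itself need not be uniformly bounded, and the reward $\sqrt t$ is unbounded, so no finite-horizon verification theorem can be quoted verbatim. Controlling these limits rests on the a priori bounds \eqref{p2} together with the monotonicity of $t\mapsto V(t,b,b^*)-\sqrt t$ from Lemma \ref{lem:valueprops}(iii); this is precisely why the detailed argument is relegated to the appendix.
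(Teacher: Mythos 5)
Your overall strategy, finite-horizon approximation combined with the a priori bounds \eqref{p2} and the monotonicity from Lemma \ref{lem:valueprops}(iii), is the same one the paper uses, and your Step~1 (supermartingale property of the unstopped process) is essentially a clean version of the corresponding part of the paper's appendix argument. Where the two routes diverge is in the treatment of the martingale property up to $\tau$. The paper works directly with the finite-horizon value $V^T$ and the \emph{finite-horizon} hitting times $\tau^T=\inf\{u\ge t:V^T(u,B(u),B^*(u))=v(u,B^*(u))\}$, uses the classical identity $V^T(\rho,B(\rho),B^*(\rho))=\E[v(\tau^T,B^*(\tau^T))\mid\F(\rho)]$ for any $t\le\rho\le\tau^T$ (Peskir--Shiryaev, Theorem 2.2), and lets $T\to\infty$ while exploiting $\tau^T\uparrow\tau$ and $V^T\uparrow V$ to obtain the desired equality $V(t,b,b^*)=\E[V(\sigma\wedge\tau,\ldots)]$. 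Your Step~2 instead proceeds via optimality of $\tau$ itself, namely the claim that $\E[g(X_{\tau\wedge u})]\to V(t,b,b^*)$, and then the general fact that a supermartingale with constant expectation is a martingale. That last reduction is correct, but the optimality claim is precisely the nontrivial passage: as you yourself note, the supremum in \eqref{V4} is over bounded stopping times, $\tau$ need not be bounded, and the reward $\sqrt t$ is unbounded, so one cannot simply cite a verification theorem. The paper sidesteps this by never invoking optimality of $\tau$; it instead passes to the limit in the finite-horizon martingale identity along $\tau^T\uparrow\tau$, which is where $\eqref{p2}$ and the monotonicity of $t\mapsto V(t,b,b^*)-\sqrt t$ are actually used. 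So the proposal correctly identifies the key tools and obstacle and is sound in Step~1, but Step~2 as written replaces the hard limiting argument by an unproven optimality assertion; to make it rigorous you would in effect have to reproduce the $\tau^T$ construction from the appendix.
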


We conclude this section with a minor technical remark. In the above statement, as well as in most of the paper, we follow
the usual language of optimal control theory to condition on the event $\{B(t)=b, B^*(t) = b^*\}$. As this is a null set under
$\P$ this procedure needs some proper interpretation in order to make it rigorous. 

Let us now introduce some notation to make this a bit clearer.

We denote by $(\F(u))_{u \geq 0}$ the (right-continuous, saturated) filtration generated by the Brownian motion $(B(u))_{u \geq 0}$.
Of course, in definition \eqref{V4} the stopping time $\tau \in \T(t)$ is understood with respect to this filtration. But it is
clear from the Markov property that, for fixed $(t,b,b^*)$, we may assume that $\tau \in \T(t)$ depends only on the behavior of the Brownian motion $(B(u))_{u \geq t}$ after time $t$ and not on the previous behavior of $(B(u))_{0 \leq u \leq t}$
(except for the requirements $B(t)=b$ and $B^*(t)=b^*$).

To formalize this fact, we denote by $(\G^{(t)}(u))_{u \geq t}$ the (right-continuous, saturated) filtration generated
by $(B(u)-B(t))_{u \geq t}$. A stopping time $\tau \in \T(t)$ (i.e., with respect to the filtration $(\F(u))_{u \geq 0}$) then may also be considered as a randomized stopping time with respect to the filtration $(\G^{(t)}(u))_{u \geq t}$, the randomization given
by the trajectories of $(B(u))_{0 \leq u \leq t}$. As $(B(u))_{0 \leq u \leq t}$ is independent of the filtration
$(\G^{(t)}(u))_{u \geq t}$, we conclude that the value of \eqref{V4} does not change whether we optimize over the randomized
or the non-randomized stopping times with respect to the filtration $(\G^{(t)}(u))_{u \geq t}$. For an introduction
to the notion of randomized stopping times, please refer to \cite{BaCh77}

The bottom line of these considerations is that we may assume w.l.o.g. in \eqref{V4} that $\tau \in \T(t)$ is a stopping time with respect to the filtration $(\G^{(t)}(u))_{u \geq t}$.

Now, the statement Lemma \ref{lem:valuemartingale}
could be rephrased without referring to conditioning on a null set, by noting that $\tau$ is a stopping time with respect to the
filtration $(\G^{(t)}(u))_{t \leq u}$.

All other statements in the paper referring to conditioning on the values $B(t)$ and $B^*(t)$ could be made rigorous
in an analogous way if the reader insists, but we do not further elaborate on these technicalities.

\section{The Value Function from an Analytic Perspective} \label{sec:valuecont}
Again fix $C \geq \widehat{C}$. Differentiating the scaling equation \eqref{p3}
with respect to $a$ and setting $a=1$ we obtain, at least formally, the PDE
\begin{align}\label{V7a}
2tV_t + bV_b + b^*V_{b^*}=V.
\end{align}

The optimal constant $C$ for inequality \eqref{V1} will be determined by analyzing whether this PDE has 
a reasonable solution for given $C>0$ or not.

We need some preparation. For $0 < h < 1$ we denote by $f^h(s)$ the density of the distribution of the stopping time
$\rho^h=\inf \{t:|B(t)|=1\},$
where $B$ is a Brownian motion starting at $B(0)=1-h.$

Define
$$g(s)=\lim_{h \searrow 0} \frac{f^h(s)}{h}, \qquad s >0.$$

It is well-known (e.g. \cite[Exercise 2.2.8.11]{KaSh88}) that there is an explicit representation of $f^h(s)$ as an infinite sum. By differentiation of each summand we obtain an explicit infinite sum representation also for $g(s)$ (see the appendix below).

The function $g$ appears in the formulation of the subsequent lemma which will turn out to be of crucial relevance for our analysis.

\begin{lemma}\label{lem:valuederivatives}
Let $W: D \to \R^+$ be a continuous function such that 
\begin{enumerate}[(a)]
\item $b^* \mapsto W(t,b,b^*)$ is Lipschitz continuous and
\item $W(t,1,1) - \sqrt{t}$ is decreasing.
\end{enumerate}
Furthermore let $S \subseteq D$ be defined by
$S_W := \{(t,b,|b|) \in D: t/b^2 \geq t_W\}$ for some fixed $t_W$. Consider a standard Brownian Motion
$B(t)$ and define $\tau_W$ to be the first hitting time of $S_W$. 
Suppose that $X(t):= W(t,B(t),B^*(t))$ is a supermartingale and $X(t \wedge \tau_W)$ is a martingale. 
Further assume that the process $X(t \wedge \tau_W \wedge \sigma^h)$ is uniformly integrable
where $\sigma^h$ is given by $\sigma^h := \inf\{s \geq t: |B(s)|=1+h\}$.

Then,
\begin{enumerate}[(i)]
\item \begin{align}\label{V9}
W_{b^*} (t,1,1):= \lim_{h \searrow 0} \frac{1}{h} & [W(t,1,1+h) - W(t,1,1)]=0 \\
& \text{ for } 0 < t < t_W \nonumber
\end{align}
\item \begin{align}\label{p4}
W_{b}(t,1,1)&:= \lim_{h \searrow 0} \frac{1}{h} [W(t,1,1) - W(t,1-h,1]\\
&\phantom{:}= -\int^\infty_0 [W(t+s, 1, 1) - W(t,1,1)]g(s)ds \nonumber
\end{align}
\end{enumerate}
\end{lemma}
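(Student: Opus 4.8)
The idea is to exploit the martingale/supermartingale structure of $X$ by applying the optional sampling theorem along cleverly chosen stopping times, and then take the limit $h\searrow 0$.

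For part (i), fix $0<t<t_W$ and start the Brownian motion at $(t,1,1)$. Since $t/1^2 = t < t_W$, the point $(t,1,1)$ lies in the non-stop region, so $\tau_W > t$ a.s. The plan is to compare two stopping strategies. First I would look at $X$ stopped at the first exit time $\eta$ of the spatial interval $[1-\delta, 1+h]$ for small $\delta, h>0$; because this exit time is strictly smaller than $\tau_W$ for $\delta$ small (we need $t/(1-\delta)^2 < t_W$, which holds for small $\delta$ since $t<t_W$), the stopped process is a genuine martingale up to $\eta$, hence $W(t,1,1) = \mathbb{E}[X(\eta)]$. Splitting on whether $B$ hits $1+h$ first (probability $\approx \delta/(\delta+h)$, with the running max now at $1+h$) or hits $1-\delta$ first, and using the Lipschitz bound (a) to control $b^*$-increments, one gets an identity relating $W(t,1,1)$, $W(t,1-\delta,1)$, $W(t,1+h,1+h)$ plus error terms. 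Letting $\delta \searrow 0$ first isolates the statement that moving the running max up from $1$ to $1+h$ costs nothing to first order — i.e. $W_{b^*}(t,1,1)=0$. The uniform integrability of $X(t\wedge\tau_W\wedge\sigma^h)$ is what lets us pass the optional sampling / limit through at the $\sigma^h$ boundary. The cleanest route is probably: $W(t,1,1) = \mathbb{E}[X(\sigma^h \wedge \tau_W)]$ by optional sampling (using UI), then as $h\searrow0$, $\sigma^h\searrow$ (the first hitting time of $\{\pm1\}$) which is strictly positive and bounded below on the relevant event, and monotonicity/continuity forces the right conclusion. I would combine this with a one-sided estimate from the supermartingale property to squeeze $W_{b^*}(t,1,1)$ to $0$.

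For part (ii), again start at $(t,1,1)$ with $0<t<t_W$. Here the key stopping time is $\tau_W\wedge\sigma^h$: the Brownian motion runs until it either first exceeds $1+h$ or first enters the stopping region. By optional sampling and uniform integrability,
\begin{equation*}
W(t,1,1) = \mathbb{E}\big[X(\tau_W \wedge \sigma^h)\big].
\end{equation*}
Now I decompose the event according to whether $\sigma^h < \tau_W$ or not. On $\{\sigma^h < \tau_W\}$ the process reaches $(\sigma^h, 1+h, 1+h)$, and by part (i) applied at the appropriate times (or by a direct Lipschitz estimate together with (i)) this contributes, to first order in $h$, the same as if the running max had stayed at $1$. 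On $\{\sigma^h \ge \tau_W\}$ the process has first hit the stopping region before reaching level $1+h$; since before $\tau_W$ the running max is still $1$, the Brownian motion must have travelled down to some level $1-h'$ and then up again — precisely, $\tau_W$ here is governed by the excursion of $B$ away from its current max. This is where the function $g$ enters: starting from $1-h$, the density of the first hitting time of $\pm1$ is $f^h$, and $g(s) = \lim_{h\searrow0} f^h(s)/h$ describes the leading-order rate at which, per unit of downward displacement $h$, the process returns to the maximum after an elapsed time $s$. Writing $X$ at $\tau_W\wedge\sigma^h$ in terms of $W(t+s,1,1)$ (value upon return to the max at elapsed time $s$) versus $W(t,1-h,1)$ (value at the start of the excursion), dividing by $h$, and letting $h\searrow 0$, the $\mathbb{E}[X(\sigma^h\wedge\tau_W)]=W(t,1,1)$ identity rearranges into exactly
\begin{equation*}
W_b(t,1,1) = -\int_0^\infty [W(t+s,1,1) - W(t,1,1)]\, g(s)\, ds.
\end{equation*}

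**Main obstacle.** The hard part is making the $h\searrow0$ limit rigorous: I need to interchange the limit with the expectation defining $\mathbb{E}[X(\tau_W\wedge\sigma^h)]$, and to show that the $1/h$-scaled contribution from the excursion-and-return event converges to the integral against $g$ rather than to something degenerate. Two things must be controlled simultaneously — the convergence $f^h/h \to g$ (which is pointwise from the explicit series, but needs a dominating function, e.g. an integrable bound uniform in small $h$, to upgrade to an $L^1$/dominated-convergence statement against the bounded continuous integrand $s\mapsto W(t+s,1,1)-W(t,1,1)$, which by property (b) and the bounds \eqref{p2} grows at most like $\sqrt{s}$) — and the uniform integrability hypothesis on $X(t\wedge\tau_W\wedge\sigma^h)$, which is exactly what prevents mass from escaping as $h\searrow 0$ and is presumably why it was included in the statement. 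Tail estimates for $\rho^h$ (sub-Gaussian decay of the hitting time of a bounded interval) should supply the dominating function; combined with the $O(\sqrt{s})$ growth of the integrand and the Lipschitz property (a) for the $b^*$-error terms from part (i), the dominated convergence goes through. Everything else is bookkeeping with optional sampling and the strong Markov property.
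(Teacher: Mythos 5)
Your part (i) has a real error and your part (ii) is structurally off. For (i): the claim that the exit time $\eta$ of the spatial corridor $[1-\delta,1+h]$ satisfies $\eta<\tau_W$ is false. Since the process only enters $S_W$ through the boundary $\{|b|=b^*\}$ by setting a new running maximum $b^*\in[1,1+h)$ and having $u\geq (b^*)^2 t_W \geq t_W$, the only thing preventing a hit before $\eta$ is the inequality $u<t_W$ --- but the exit time $\eta$ is a random time that exceeds $t_W$ with positive probability, no matter how small $\delta$ and $h$ are; the quantity $t/(1-\delta)^2$ is irrelevant. Moreover, your identity relates $W(t,1,1)$, $W(t,1-\delta,1)$, $W(t,1+h,1+h)$, none of which is $W(t,1,1+h)$, and your ``cleanest route'' $W(t,1,1)=\E[X(\sigma^h\wedge\tau_W)]$ never brings $W(t,1,1+h)$ onto the stage at all; a one-sided supermartingale bound cannot rescue this because no monotonicity of $W$ in $b^*$ is hypothesised. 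What the paper does instead is fix the same stopping time $\tau^h=\sigma^h\wedge\tau_W$, condition on the two starting points $(t,1,1)$ and $(t,1,1+h)$ using the same Brownian increments, and observe that at time $\sigma^h$ the two running maxima coincide, so the two expectations of $W(\tau^h,\cdot,\cdot)$ agree on $\{\sigma^h=\tau^h\}$; on the complementary event the Lipschitz property bounds the mismatch by $Lh$, and $\P[\tau_W<\sigma^h]\to 0$ because $\sigma^h\searrow t<\tau_W$ a.s.\ --- this coupling is the missing ingredient.

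For (ii) you again start the process at $(t,1,1)$, and there is simply no parameter $h$ and no quantity $W(t,1-h,1)$ in that picture; trying to extract the density $f^h$ and its derivative $g$ from the excursion structure of a single process started at the maximum is not a proof. The correct (and much simpler) move is to start at $(t,1-h,1)$: there the process lies strictly in the interior $|B(u)|<1=B^*(u)$ until the first hitting time $\rho^h$ of $\{\pm 1\}$, so $\rho^h\leq\tau_W$ automatically and $X(u\wedge\tau_W)$ being a martingale yields the clean identity $W(t,1-h,1)=\int_0^\infty W(t+s,1,1)f^h(s)\,ds$. Differentiating in $h$ then requires the estimates you flagged but did not supply: the paper splits the integral at $\alpha$ and $K$, uses Dini's theorem for $f^h/h\to g$ uniformly on $[\alpha,K]$, controls the tail beyond $K$ by $\frac1h\int_K^\infty sf^h(s)\,ds\leq 2\P[\rho^h>K]$, and controls $[0,\alpha]$ by the bound $W(t+s,1,1)-W(t,1,1)\leq s/(2\sqrt t)$ from hypothesis (b) together with $\frac1h\int_0^\alpha s f^h(s)\,ds\to 2-\int_\alpha^\infty s g(s)\,ds\to 0$. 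Identifying this ``main obstacle'' without closing it leaves the argument incomplete.
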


Observe that (i) in the above Lemma would follow directly from Ito's formula if we assume that $W$ is
sufficiently differentiable by considering
\begin{align}\label{p5}
dX(t) =(W_t + \tfrac{1}{2} W_{bb}) dt + W_b dB(t)+ W_{b^*}dB^*(t)
\end{align}
which is the increment of a martingale. The process $dB^*(t)$ is non-decreasing and its variation is a.s.~singular with respect 
to Lebesgue measure. A necessary condition for $(W(t,B(t), B^*(t)))_{\tau_W \geq t}$ to be a martingale therefore is that 
$W_{b^*}$ vanishes a.s.~with respect to the variation  measure of $dB^*$. This indicates that $W_{b^*}(t,b,b^*)=0$ should 
hold true whenever $|b|=b^*$ and $(t,b,b^*)$ is in the non-stop region $NS$. In particular, we should have $W_{b^*} (t,1,1)=0$, 
for $t<t_W.$

\begin{proof}[Proof of \ref{lem:valuederivatives}]
(i) For $h >0$ as in \eqref{V9} define, conditionally on $B(t)=1$ and $B^*(t)=1,$ the stopping times 
\begin{align}
\sigma^h&:=\inf\{ u \geq t:|B(u)| =1+h\} \qquad \text{and}\label{4.11}\\
\tau^h&:=\sigma^h \wedge \tau_W.\label{4.12}
\end{align}
Recall that the random variable $\tau_W$ is a stopping time with respect to
the filtration $(\G^{(t)}(u))_{t \leq u}$.
Note that the process $(u,B(u),B^*(u))_{u \geq t}$, starting at $B(t) = 1$ and $B^*(t)=1+h$, also remains in 
$D \backslash S_W$ up to the stopping time $\tau_W$. To see this, we distinguish two cases:
\begin{enumerate}[(i)]
\item $t \leq u < \tau^h$: Here we have $B(u) < 1+h=B^*(u)$ and thus $(u,B(u),B^*(u)) \in D \backslash S_W$.
\item $\tau^h \leq u < \tau_W$: In this case $B^*(u)$ is the same for $B^*(t)=1$ and $B^*(t)=1+h$.
\end{enumerate}

As $t < t_0$ we have that $\tau$ is a.s.~strictly positive. This implies that 
\begin{align}\label{A3b}
\lim_{h \searrow 0} \mathbb{P} [\sigma^h = \tau^h] =1,
\end{align}
as $\lim_{h \searrow 0} \sigma^h = t$, almost surely.

We may write 
\begin{align}\label{A4}
W(&t,1,1+h) - W(t,1,1) \\
&=\E^{(t,1,1+h)} [W(\tau^h , B (\tau^h ), B^*(\tau^h) )] - \E^{(t,1,1)} [W(\tau^h, B(\tau^h), B^*(\tau^h))] \nonumber \\
&=\E^{(t,1,1+h)} [W(\tau^h , B (\tau^h ), 1+h )] - \E^{(t,1,1)} [W(\tau^h, B(\tau^h), B^*(\tau^h))]. \nonumber
\end{align}

Here we use that $B^*(\tau^h) = 1+h$ conditionally on $B^*(t) = 1+h$ as $\tau^h \leq \sigma^h$ by definition. Furthermore
we make use of the martingale property of $W(t, B(t), B^*(t))$ and the optional stopping theorem.
For the use of the latter we need the assumption 
that $(W(u\wedge \tau^h, B(u \wedge \tau^h), B^*(u \wedge \tau^h)))_{u \geq t}$ is uniformly integrable.

On the set $\{\sigma^h = \tau^h\}$ we have $B^*(\tau^h)=B^*(\sigma^h)=1+h$ for both initial conditions $B(t)=1, B^*(t)=1+h$ and $B(t)=1, B^*(t)=1$. Therefore, the value $W(\tau^h, B(\tau^h), B^*(\tau^h))$ is the same under both initial conditions. It follows that
\begin{align*}
\mathbb{E}^{(t,1,1+h)}[&W(\tau^h, B(\tau^h), 1+h) \mathbbm{1}_{\{\sigma^h=\tau^h\}}] \\
&- \mathbb{E}^{(t,1,1)}[W(\tau^h, B(\tau^h), B^*(\tau^h)) \mathbbm{1}_{\{\sigma^h=\tau^h\}}]=0.
\end{align*}

On the remaining set $\{\tau^h < \sigma^h\}$ we have $B^*(\tau^h)\in [1,1+h]$. 
Because $W$ is Lipschitz continuous in the variable $b^*$ with some constant $L$ we may estimate
\begin{align*}
\mathbb{E}^{(t,1,1+h)}[&W(\tau^h, B(\tau^h), 1+h) \mathbbm{1}_{\{\tau^h<\sigma^h\}}] \\
&- \mathbb{E}^{(t,1,1)}[W(\tau^h, B(\tau^h), B^*(\tau^h) \mathbbm{1}_{\{\tau^h<\sigma^h\}}]
\leq Lh\mathbb{P}[\tau^h < \sigma^h].
\end{align*}

Dividing \eqref{A4} by $h$ and passing to the limit we obtain from \eqref{A3b} that 
$$W_{b^*}(t,1,1):= \lim_{h \searrow 0} \frac{1}{h} [W(t,1,1+h)-W(t,1,1)]=0$$

(ii) %
As $X$ is a martingale before hitting $S_W$ we have for $\rho^h$ as above that
\[W(t,1-h,1) = \E^{(t,1-h,1)}[W(\rho^h,1,1)] = \int_0^{\infty}W(t+s,1,1)f^h(s) ds,\]
where the density $f^h$ is given by 
\begin{align}\label{form:dens}
f^h(s) := \frac{1}{\sqrt{2\pi s^3}} \sum_{n=-\infty}^{\infty} \left[(4n+h) e^{-\frac{(4n+h)^2}{2s}} +(4n+2-h) e^{-\frac{(4n+2-h)^2}{2s}}\right]. 
\end{align}
We can use this relation to calculate the derivative w.r.t.\ the second component:

\[ -W_b(t,1,1) := \lim_{h \searrow 0} \frac{1}{h} \int_0^{\infty}[W(t+s,1,1)-W(t,1,1)]f^h(s)ds \]

We split this integral into two parts at some point $\alpha > 0$ and observe that $f^h(s)$ is continuous and
$f^h(s) \searrow 0$ for $h \searrow 0$ pointwise at $s >0$ and thus by Dini's Theorem also uniformly (monotone) on any 
interval $[\alpha,K]$, for $0 < h < h_0(\alpha)$. Therefore we have\
\begin{align} \label{form: derivative}
\lim_{h \searrow 0} \frac{1}{h} &\int_{\alpha}^{K}[W(t+s,1,1)-W(t,1,1)]f^h(s)ds \\
&= \int_{\alpha}^{K}[W(t+s,1,1)-W(t,1,1)]g(s)ds \nonumber
\end{align}
for $g$ given by 
\begin{align}\label{form:derdens}
g(s) := \lim_{h \searrow 0} \mfrac{f^h(s)}{h} = \frac{1}{\sqrt{2\pi s^3}} \left[1+
2\cdot\sum_{n=1}^{\infty}(-1)^n\left(1-\mfrac{(2n)^2}{s}\right)e^{-\tfrac{(2n)^2}{2s}}\right].
\end{align}
As before, because $u \mapsto W(u,1,1) - \sqrt{u}$ is decreasing and $s \mapsto \sqrt{t+s} - \sqrt{t}$ is
concave, we have $W(t+s,1,1)-W(t,1,1) \leq \sqrt{t+s}-\sqrt{t} \leq s \frac{1}{2\sqrt{t}}$.
Therefore the integrand is dominated by
$\frac{sf^h(s)}{2\sqrt{t}h}$.

For $K>0$ we can then estimate
\begin{align*}
\frac{1}{h}\int_K^{\infty}sf^h(s)ds &= \frac{1}{h}\P[\rho^h > K]\E[(B_{\rho^h}-B_0)^2|\rho^h > K] \\
&= \P[\rho^h > K](2-h) \leq 2\P[\rho^h > K].
\end{align*}
This probability tends to $0$ uniformly in $K$, thus the integrals over $[K,\infty)$ can be neglected and we can replace
$K$ by $\infty$ in \eqref{form: derivative}.

For the other part of the integral we first observe that
\begin{align*}
\lim_{h \searrow 0} \frac{1}{h}&\int_0^{\alpha} sf^h(s)ds \\
&=\lim_{h \searrow 0} \frac{1}{h} \int_0^{\infty} sf^h(s) ds - \frac{1}{h}\int_{\alpha}^{\infty} s f^h(s) ds \\
&=\lim_{h \searrow 0} \frac{1}{h} \E[\rho^h] - \frac{1}{h}\int_{\alpha}^{\infty} s f^h(s) ds \\
&=\lim_{h \searrow 0} 2 - h - \frac{1}{h}\int_{\alpha}^{\infty} s f^h(s) ds=2 - \int_{\alpha}^{\infty} s\,g(s) ds.
\end{align*}
The last integral converges to $2$ for $\alpha \to 0$ by monotone convergence.

We conclude by setting $M \geq |W_t(u,1,1)|$ for $u \in [t,t+\alpha]$, making the estimate

\begin{align*}
\left| \frac{1}{h}\int_0^{\alpha}[W(t+s,1,1) - W(t,1,1)]f^h(s) ds\right| 
& \leq \frac{M}{h} \int_0^{\alpha}sf^h(s)ds \\ 
&\overset{h \to 0}{\to} M\left(2 - \int_{\alpha}^{\infty} s g(s) ds\right),
\end{align*}
and then taking the limit for $\alpha \to 0$.
\end{proof}

We can now apply this technical Lemma to our value function $V$ by checking that the assumptions of the
previous Lemma are satisfied by the value function $V$:

\begin{lemma}\label{lem:vderivatives}
Let $V: D \to \R$ be the value function for \eqref{V4}. Then,
\begin{enumerate}[(i)]
\item \begin{align}\label{V9a}
V_{b^*} (t,1,1):= \lim_{h \searrow 0} \frac{1}{h} [V(t,1,&1+h) - V(t,1,1)]=0 \\
& \text{ for } 0 < t < t_0 \nonumber
\end{align}
\item \begin{align}
V_{b}(t,1,1)&:= \lim_{h \searrow 0} \frac{1}{h} [V(t,1,1) - V(t,1-h,1]\label{p4a}\\
&\phantom{:}= -\int^\infty_0 [V(t+s, 1, 1) - V(t,1,1)]g(s)ds \nonumber
\end{align}
\end{enumerate}
\end{lemma}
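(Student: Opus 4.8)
The plan is to deduce Lemma \ref{lem:vderivatives} from Lemma \ref{lem:valuederivatives} by verifying, with $W = V$, $S_W = S$ and $t_W = t_0$, the hypotheses of the latter. The function $V$ is continuous by Lemma \ref{lem:valueprops}(i); Lipschitz continuity in $b^*$ (hypothesis (a)) follows from the two-sided bound $-C \le \frac{V(t,b,b^*) - V(t,b,b^{*\prime})}{b^* - b^{*\prime}} \le 0$ for $b^{*\prime} > b^*$ — monotonicity because enlarging $b^*$ only shrinks the reward $\sqrt{t+\tau} - C(b^* \vee (b+B(\tau))^*)$, and the lower bound $-C$ because the two running maxima differ by at most $b^{*\prime} - b^*$; thus the Lipschitz constant is $L = C$. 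Hypothesis (b), that $t \mapsto V(t,1,1) - \sqrt{t}$ is decreasing, is exactly Lemma \ref{lem:valueprops}(iii) specialised to $b = b^* = 1$. Finally, the supermartingale/martingale structure of $X(u) = V(u, B(u), B^*(u))$ and $X(u \wedge \tau_0)$ is provided by Lemma \ref{lem:valuemartingale}, and the description of the stopping region on the diagonal $\{|b| = b^*\}$ as $\{t/b^2 \ge t_0\}$ is Lemma \ref{lem:stopreg} together with Lemma \ref{lem:hollow} (which guarantees $S \subseteq \{|b| = b^*\}$, so $S = S_W$ with $t_W = t_0$).

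The one genuinely non-routine hypothesis is the uniform integrability of the stopped process $X(u \wedge \tau_0 \wedge \sigma^h)$, where $\sigma^h = \inf\{s \ge t : |B(s)| = 1+h\}$. Here I would use the explicit bounds \eqref{p2}: on the event that we have not yet stopped, $|X(u \wedge \tau_0 \wedge \sigma^h)| \le \sqrt{u \wedge \sigma^h} + C B^*(u \wedge \sigma^h) \le \sqrt{\sigma^h} + C(1+h)$. Since the Brownian motion is reflected (in absolute value) at level $1+h$ before $\sigma^h$, the running maximum $B^*$ is bounded by $1+h$ up to that time, so the spatial part is uniformly bounded; it remains to control $\sqrt{\sigma^h}$, and indeed $\sigma^h$ has moments of all orders (it is the exit time of a bounded interval for Brownian motion started in its interior), so $\sqrt{\sigma^h} \in L^1$ and dominates the whole family $\{X(u \wedge \tau_0 \wedge \sigma^h) : u \ge t\}$. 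This single integrable dominating random variable gives uniform integrability, which is precisely what is needed for the optional-stopping step \eqref{A4} in the proof of Lemma \ref{lem:valuederivatives}.

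Having checked all hypotheses, conclusions (i) and (ii) of Lemma \ref{lem:valuederivatives} applied to $W = V$ are literally the claimed identities \eqref{V9a} and \eqref{p4a}, completing the proof. I expect the only real subtlety to be the uniform-integrability verification; everything else is a direct citation of the structural lemmas already established. One should also note, for \eqref{p4a}, that the domination argument in the proof of Lemma \ref{lem:valuederivatives} — where the integrand is controlled by $s f^h(s)/(2\sqrt{t}\,h)$ using that $V(t+s,1,1) - V(t,1,1) \le \sqrt{t+s} - \sqrt{t}$ — uses exactly hypothesis (b) for $V$, so no extra work is required there either.

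\begin{proof}[Proof of Lemma \ref{lem:vderivatives}]
We apply Lemma \ref{lem:valuederivatives} with $W = V$, $t_W = t_0$ and $S_W = S$. By Lemma \ref{lem:hollow} the stopping region $S$ is contained in the diagonal $\{(t,b,b^*) \in D : |b| = b^*\}$, and by Lemma \ref{lem:stopreg} we have $(t,b,|b|) \in S$ if and only if $t/|b|^2 \ge t_0$; hence $S = \{(t,b,|b|) \in D : t/b^2 \ge t_0\}$ coincides with $S_W$ for $t_W = t_0$.

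We verify the hypotheses of Lemma \ref{lem:valuederivatives}. Continuity of $V$ is Lemma \ref{lem:valueprops}(i). For hypothesis (a), let $b^{*\prime} > b^* \ge |b|$. On the one hand $V(t,b,b^{*\prime}) \le V(t,b,b^*)$, since replacing $b^*$ by the larger $b^{*\prime}$ only decreases the reward $\sqrt{t+\tau} - C(b^* \vee (b + B(\tau))^*)$ in \eqref{eq:Vchar}. On the other hand, for any $\tau \in \T$,
\[
b^{*\prime} \vee (b + B(\tau))^* \le (b^* \vee (b+B(\tau))^*) + (b^{*\prime} - b^*),
\]
so taking expectations and suprema gives $V(t,b,b^*) - V(t,b,b^{*\prime}) \le C(b^{*\prime} - b^*)$. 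Thus $b^* \mapsto V(t,b,b^*)$ is Lipschitz with constant $C$. Hypothesis (b), that $V(t,1,1) - \sqrt{t}$ is decreasing, is Lemma \ref{lem:valueprops}(iii).

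It remains to check the supermartingale/martingale and uniform integrability conditions. Let $B$ be a standard Brownian motion and $\tau_0$ the first hitting time of $S$. By Lemma \ref{lem:valuemartingale}, $X(u) = V(u,B(u),B^*(u))$ is a supermartingale and $X(u \wedge \tau_0)$ is a martingale. Fix $h > 0$ and set $\sigma^h = \inf\{s \ge t : |B(s)| = 1+h\}$. Before time $\sigma^h$ the process $|B|$ stays below $1+h$, so $B^*(u \wedge \sigma^h) \le 1+h$, and by the bound \eqref{p2},
\[
\bigl| X(u \wedge \tau_0 \wedge \sigma^h) \bigr| \le \sqrt{u \wedge \tau_0 \wedge \sigma^h} + C B^*(u \wedge \tau_0 \wedge \sigma^h) \le \sqrt{\sigma^h} + C(1+h).
\]
Since $\sigma^h$ is the exit time of the bounded interval $[-(1+h), 1+h]$ by a Brownian motion started strictly inside it, $\sigma^h$ has finite moments of all orders; in particular $\sqrt{\sigma^h} + C(1+h)$ is an integrable random variable dominating the family $\{X(u \wedge \tau_0 \wedge \sigma^h) : u \ge t\}$. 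Hence this family is uniformly integrable.

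All hypotheses of Lemma \ref{lem:valuederivatives} being satisfied, its conclusions (i) and (ii) applied to $W = V$ are exactly \eqref{V9a} and \eqref{p4a}.
\end{proof}
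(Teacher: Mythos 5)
Your proof is correct and follows the same high-level plan as the paper: apply Lemma \ref{lem:valuederivatives} with $W=V$, $S_W=S$, $t_W=t_0$, verifying its hypotheses from the structural lemmas, with uniform integrability of the stopped process being the only non-routine check. The one genuine difference is in how you handle that check. You dominate $|X(u\wedge\tau_0\wedge\sigma^h)|$ directly via the two-sided bound \eqref{p2} by the single integrable variable $\sqrt{\sigma^h}+C(1+h)$, whereas the paper bounds the process from above by $V(t,0,1+h)+\tfrac14+\tau^h$ using monotonicity of $V$ in $t$, $|b|$, $b^*$ and simply asserts $V\ge 0$ for the lower bound. Your version is in fact more robust: $V$ need not be nonnegative (e.g. $V(t,1,1)\to\sqrt{t_0}-C<0$ as $t\nearrow t_0$), and the paper's chain also misstates the direction of monotonicity of $V$ in $b^*$ (it is decreasing, not increasing); your one-line estimate $|V(t,b,b^*)|\le\sqrt{t}+Cb^*$ sidesteps both points. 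You also supply an explicit Lipschitz argument giving the constant $L=C$, where the paper is content to say ``by definition'' --- a welcome elaboration, though for invoking Lemma \ref{lem:valuederivatives} any finite constant would do.
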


\begin{proof}
$V$ is continuous by Lemma \ref{lem:valueprops} and Lipschitz-continuous in $b^*$ by definition. We also have $V(t,1,1) - \sqrt{t}$ is
decreasing by Lemma \ref{lem:valueprops}. Furthermore $V(t,B(t),B^*(t))$ is a martingale up to hitting $S$ by 
Lemma \ref{lem:valuemartingale}. Thus, setting $W=V$, $S_W = S$ and $t_W = t_0$, it remains to check the required uniform integrability condition. We have
\begin{align*}
0 \leq & V(u\wedge \tau^h, B(u \wedge \tau^h), B^*(u \wedge \tau^h)) \\
& \leq V(t,B(u \wedge \tau^h), B^*(u \wedge \tau^h)) + (\sqrt{u \wedge \tau^h} - \sqrt{t}) \\
& \leq V(t,0,1+h) + \sqrt{\tau^h} \leq V(t,0,1+h) + \frac{1}{4} + \tau^h.
\end{align*}
where the first estimate follows because the function
$u \mapsto V(u,b,b^*) - \sqrt{u}$ is decreasing in $u$. The second inequality is due to the
fact that $V$ is decreasing in $|b|$ and increasing in $b^*$ as well as $B^*(u \wedge \tau^h) \leq 1+h$.
Now, $\tau^h \leq \sigma^h$ and $\sigma^h$ has exponential moments and is therefore integrable which
yields the desired uniform integrability.

Observe that $\tau^h$ is smaller than the first hitting time of 
the non-stop region $NS$
no matter whether we condition on $(t,1,1)$ or $(t,1,1+h)$ which warrants the use of Lemma \ref{lem:valuemartingale}.
\end{proof}

The subsequent lemma shows that, for $t > t_0$, the behavior of $V_b$ and $V_{b^*}$ follows a different
pattern than the one given by Lemma \ref{lem:vderivatives}. We find that 
\begin{equation}\label{V10}
V_{b^*}(t,1,1)+V_b(t,1,1)=-C
\end{equation}
where we have to interpret this equation properly.

\begin{lemma}\label{3.2}
For $t >t_0$ we have
\begin{align}
\lim_{h \searrow 0} \frac{1}{h} [V(t,1+h, 1+h) - V(t, 1, 1)]=-C.
\end{align}
\end{lemma}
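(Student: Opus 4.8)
The plan is to observe that for $t > t_0$ this is not really a differentiation statement at all: both endpoints of the difference quotient lie in the stopping region $S$, where $V$ has the explicit affine form $\sqrt{t} - Cb^*$, so the quotient equals $-C$ identically for all sufficiently small $h$.

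First I would recall from Lemma \ref{lem:stopreg} that $(t,b,|b|) \in S$ precisely when $t/|b|^2 \geq t_0$. Since $t > t_0$, the point $(t,1,1)$ satisfies $t/1^2 = t > t_0$ and hence lies in $S$; moreover, since $t_0(1+h)^2 \to t_0 < t$ as $h \searrow 0$, there is $h_0 > 0$ with $t/(1+h)^2 > t_0$ for every $0 < h < h_0$, so that $(t,1+h,1+h) \in S$ as well.

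Then I would invoke the definition \eqref{p2.1} of the stopping region, namely $V(t,b,b^*) = t^{1/2} - Cb^*$ on $S$, to compute, for $0 < h < h_0$,
\begin{align*}
V(t,1+h,1+h) - V(t,1,1) = \big(\sqrt{t} - C(1+h)\big) - \big(\sqrt{t} - C\big) = -Ch,
\end{align*}
so that dividing by $h$ and letting $h \searrow 0$ yields the asserted identity.

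There is no substantial obstacle here; the statement is a direct consequence of Lemma \ref{lem:stopreg} and the definition of the stopping region. The only point deserving a word is that the \emph{strict} inequality $t > t_0$ is what guarantees that the perturbed point $(t,1+h,1+h)$ still belongs to $S$ for small $h$ — this is precisely where the hypothesis is used, and it is the reason the analogous statement need not hold at $t = t_0$.
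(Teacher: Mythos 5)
Your proof is correct and matches the paper's one-line argument: for $t > t_0$, both $(t,1,1)$ and $(t,1+h,1+h)$ with $h$ small lie in the stopping region $S$ by Lemma \ref{lem:stopreg}, so $V(t,b^*,b^*) = \sqrt{t} - Cb^*$ there and the difference quotient is identically $-C$. You spell out the role of the strict inequality $t > t_0$ a bit more explicitly than the paper does, but the reasoning is the same.
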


\begin{proof}
For $t >t_0$ we have 
\begin{align*}
V(t,b^*, b^*) = t^{\frac{1}{2}} - Cb^*,
\end{align*}
for $b^*$ in a neighbourhood of $1$.
\end{proof}

To abbreviate notation we shall sometimes denote by $V(t)$ the function $V(t,1,1)$ (recall that we keep $C \geq \widehat{C}$ and the corresponding $t_0 = t_0(C)$ fixed). We thus obtain the following integro-differential equation for $V(t).$

\begin{lemma}
The function $V(t)$ satisfies the following equations
\begin{align}
&2tV'(t) = V(t) + C, \quad &t > t_0,\label{V11}\\
&2tV'(t) = V(t) + \int^\infty_0 [V(t+s) - V(t)] g(s)ds, \quad &t < t_0.\label{V11a}
\end{align}
\end{lemma}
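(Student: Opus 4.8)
The plan is to derive \eqref{V11} and \eqref{V11a} from the scaling PDE \eqref{V7a} restricted to the diagonal $\{b = b^* = 1\}$, using the one-sided derivative identities already established. Concretely, along the curve $a \mapsto (a^2 t, a, a)$ the scaling relation \eqref{p3} reads $V(a^2 t, a, a) = a V(t,1,1) = a V(t)$. Differentiating in $a$ at $a=1$ gives, at least formally, $2t V_t(t,1,1) + V_b(t,1,1) + V_{b^*}(t,1,1) = V(t)$, which is exactly \eqref{V7a} evaluated on the diagonal. The point is that on the diagonal we have genuine access to the relevant one-sided derivatives: $V_t(t,1,1) = V'(t)$ makes sense because $t \mapsto V(t)$ is a difference of a decreasing function and $\sqrt{t}$ hence locally Lipschitz, and is in fact differentiable wherever the integro-differential relations force it to be; and $V_b(t,1,1)$, $V_{b^*}(t,1,1)$ are the one-sided derivatives computed in Lemma \ref{lem:vderivatives} and Lemma \ref{3.2}.

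First I would treat the case $t > t_0$. Here Lemma \ref{3.2} gives $\lim_{h \searrow 0} \frac{1}{h}[V(t,1+h,1+h) - V(t,1,1)] = -C$; but the left-hand side is precisely the directional derivative of $V$ at $(t,1,1)$ along the diagonal in the $(b,b^*)$-plane, i.e.\ $V_b(t,1,1) + V_{b^*}(t,1,1) = -C$ in the sense of \eqref{V10}. Substituting into the diagonal scaling PDE yields $2t V'(t) = V(t) + C$, which is \eqref{V11}. To make this rigorous one should justify that $t \mapsto V(t)$ is differentiable for $t > t_0$: this is immediate from $V(t,1,1) = \sqrt t - C$ for $t > t_0$, which follows from Lemma \ref{lem:stopreg} together with the definition of the stopping region and $b = b^* = 1$, $t/b^2 = t \geq t_0$. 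Indeed one can simply verify \eqref{V11} directly from $V(t) = \sqrt t - C$: $2t V'(t) = 2t \cdot \frac{1}{2\sqrt t} = \sqrt t = V(t) + C$.

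Next the case $t < t_0$. Now Lemma \ref{lem:vderivatives}(i) gives $V_{b^*}(t,1,1) = 0$, and Lemma \ref{lem:vderivatives}(ii) gives $V_b(t,1,1) = -\int_0^\infty [V(t+s,1,1) - V(t,1,1)]\,g(s)\,ds = -\int_0^\infty [V(t+s) - V(t)]\,g(s)\,ds$. Feeding $V_b(t,1,1) + V_{b^*}(t,1,1) = -\int_0^\infty[V(t+s)-V(t)]g(s)\,ds$ into the diagonal scaling PDE gives $2t V'(t) = V(t) + \int_0^\infty [V(t+s) - V(t)]\,g(s)\,ds$, which is \eqref{V11a}. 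The remaining point is to confirm that the scaling PDE holds on the diagonal in the pointwise (not merely formal) sense for $t < t_0$. For this I would argue that $V(\cdot,1,1)$ is differentiable at $t$: rearranging the desired identity shows $V'(t)$ would have to equal $\frac{1}{2t}(V(t) + \int_0^\infty [V(t+s)-V(t)]g(s)\,ds)$; conversely, one obtains differentiability by taking the difference quotient of $V(a^2 t, a, a) = a V(t)$ in $a$ and controlling the three partial increments, the $b$ and $b^*$ increments being handled by Lemmas \ref{lem:vderivatives} and \ref{3.2} (one-sided, but the two one-sided limits of the diagonal increment agree) and the $t$-increment being pinned down by the continuity of $V$ plus the two-sided validity of the scaling relation.

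The main obstacle is the rigorous passage from the \emph{formal} scaling PDE \eqref{V7a} to a genuine pointwise identity along the diagonal, since $V$ is a priori only known to be continuous, Lipschitz in $b^*$, and monotone-plus-$\sqrt t$ in $t$; it is not assumed to be $C^1$. The resolution is that one never needs the full PDE away from the diagonal: differentiating the exact scaling identity $V(a^2 t, a, a) = a V(t)$ along the single curve $a \mapsto (a^2 t, a, a)$ reduces everything to the existence of a directional derivative of $V$ at $(t,1,1)$ along the direction $(2t, 1, 1)$, and that directional derivative decomposes cleanly into the temporal part $V'(t)$ and the spatial part $V_b(t,1,1) + V_{b^*}(t,1,1)$ whose existence and values are exactly the content of Lemmas \ref{lem:vderivatives} and \ref{3.2}. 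Hence the two equations \eqref{V11} and \eqref{V11a} follow, and in particular $V(\cdot,1,1)$ is differentiable on $(0,t_0) \cup (t_0,\infty)$.
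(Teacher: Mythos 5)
Your proposal is correct and follows essentially the same route as the paper: for $t > t_0$ the equation \eqref{V11} is verified directly from $V(t) = \sqrt{t} - C$, while for $t < t_0$ you differentiate the scaling identity $V(a^2 t, a, a) = a V(t)$ in $a$ at $a = 1$, decompose the resulting difference quotient into a temporal increment and the two spatial increments, and then invoke Lemma \ref{lem:vderivatives}(i)--(ii) to identify $V_{b^*}(t,1,1) = 0$ and $V_b(t,1,1) = -\int_0^\infty [V(t+s)-V(t)]g(s)\,ds$. This is exactly the paper's three-term decomposition of $V(a^2 t, a, a) - V(t,1,1)$ divided by $a-1$; your ``directional derivative along the diagonal'' phrasing is the same argument viewed infinitesimally.
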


\begin{proof}
The first assertion is obvious, as we have
\begin{align}
V(t)=t^{\frac{1}{2}} - C, \quad \text{for} \quad t > t_0.
\end{align}

The second equation follows, at least formally, from \eqref{V7a}, \eqref{V9a} and \eqref{p4a}.

To justify \eqref{V11a} in a more pedantic way, note that for $a >1$ we obtain from \eqref{p3}
\begin{align*}
a V(t,1,1) - V(t,1,1)&= V(a^2t, a, a) - V(t,1,1)\\
&=(V(a^2t, a,a) - V(t,a,a))\\
&+ (V(t,a,a) - V(t,1,a))\\
&+ (V(t,1,a) - V(t,1,1)).
\end{align*}

Dividing by $a-1$ and letting $a$ decrease to $1,$ we deduce \eqref{V11a} from Lemma \ref{lem:vderivatives}.
\end{proof}

Let us discuss the behaviour of the function $V(t)$ at $t=t_0$. As observed in the previous section, $V(t)$ is continuous so that we must have ``continuous pasting'' at $t_0.$ It is the immediate reflex -- at least it was so for the present authors -- to expect {\it smooth pasting} of $V(t)$ at $t=t_0$ i.e. $\lim_{t \searrow t_0} V'(t)=\lim_{t \nearrow t_0} V'(t).$ By \eqref{V11} and \eqref{V11a} this would result in determining $t_0$ by equating $C$ with $\int^\infty_0(V(t_0 + s) - V(t_0)) g(s)ds.$ To our big surprise this turned out \emph{not} to be the case; after some time of reconsidering we had to conclude that there is little reason why the smooth pasting principle should prevail in the present context. Here is one intuitive reason: for a fixed number $t_0>0$ we have that for almost all trajectories of a Brownian motion $B=(B(t))_{t \geq 0}$, starting at $B(0)=0$, there is no $t>0$ such that the two equalities $|B(t)|=B^*(t)=(\frac{t}{t_0})^{\frac{1}{2}}$ are simultaneously verified. By Lemma \ref{lem:stopreg} we conclude that a  discontinuity of the derivatives of $V(t, B(t), B^*(t))$ can only take place where these two equations are simultaneously satisfied. Roughly speaking: the Brownian motion $B$ ``{\it does not see}'' a kink of the function $V(t)$ at $t=t_0.$ 

As a matter of fact, this natural example of a case of \emph{non-smooth pasting} in the case of continuous martingales seems to us a remarkable feature of the present paper. The literature on \emph{non-smooth pasting} is generally revolving around non-continuous processes. Some examples of non-smooth pasting for processes with jumps can be found in \cite{AlKy05}, \cite{AsAvPi04}, \cite{BoLe02}, \cite{DaHo04} and \cite{PeSh00}.

\section{The Integro-Differential-Equation}\label{sec:oide}

Fix the parameters $C>0$ and $t_0 >0$. We consider the ordinary integro-differential equation for the function $U=U^{C,t_0}$ 
\begin{align}
U(t)&=t^{\frac{1}{2}}-C, \qquad &t \geq t_0,\label{V14}\\
2tU'(t)&= U(t) + \int^\infty_0 [U(t+s) - U(t)] g(s)ds, \quad &0 < t\leq t_0,\label{V14a}
\end{align}
where $g$ is given by \eqref{form:derdens}.

Here the fixed behaviour \eqref{V14} of $U(t)$, for $t \geq t_0$, is considered as the initial condition, and subsequently the OIDE (ordinary integro-differential equation) \eqref{V14a} is solved by letting $t$ decrease from $t_0$ to $0$. For $t=t_0$, the derivative $U'(t_0)$ in \eqref{V14a} is understood as the {\it left} limit of $U'(t)$ when $t$ increases to $t_0.$ 

It is standard to verify that, for $C>0, t_0>0,$ and $\epsilon >0$ the solution $U^{C,t_0}$ of \eqref{V14} is well-defined for $t \in [\epsilon, \infty)$ and depends smoothly on the parameters $C$ and $t_0.$ On the other hand, the $2t$ term on the left hand side of \eqref{V14a} indicates that only for special cases of $C$ and $t_0$ this solution can be extended to a continuous and finitely valued function $U(t),$ defined for all $t \in \, [0,\infty)$.

The evidence resulting from our numerical analysis of the solutions $(U^{C,t_0}(t))_{t>0}$, in dependence of $C$ and $t_0$, can be resumed as follows:

\begin{numerics}\label{ne}
$\quad$

\begin{enumerate}[(i)] 
\item There is a smallest number $\widecheck{C} >0$ as well as a unique number $\widecheck{t}_0 >0$ such that the OIDE \eqref{V14a} admits a solution $U^{\widecheck{C},\widecheck{t}_0}(t)$ which has a finite limit $\lim_{t \to 0} U^{\widecheck{C},\widecheck{t}_0}(t)$. This solution is monotone increasing and $U^{\widecheck{C}, \widecheck{t}_0}(t) \geq t^{\frac{1}{2}} - \widecheck{C}.$

\item For $0 < C < \widecheck{C}$ there does not exist $t_0 >0$ such that $U(t)=U^{C,t_0}(t)$ remains bounded from below as $t \searrow 0.$

\item For $C > \widecheck{C}$ there are precisely two values $t_1 = t_1(C), t_2=t_2 (C)$ depending on $C$ in a continuous and one-to-one way such that $U^{C,t_i}(t)$ has a finite limit, as $t \searrow 0,$ for $i=1,2$. These solutions satisfy $U^{C,t_i}(t) \geq t^{\frac{1}{2}}-C$. \\For $t_0 \in (t_1,t_2)$, the solutions of the OIDE \eqref{V14a} tend to $+\infty$, for $t \searrow 0$, while, for $t_0 \notin [t_1,t_2]$, the solutions tend to $-\infty$, for $t \searrow 0$. We have $t_1(C) < \widecheck{t}_0 < t_2(C)$ and $\lim_{C \to \widecheck{C}} t_1(C) = \lim_{C \to \widecheck{C}} t_2(C)=\widecheck{t}_0$.

The functions $U^{C, t_1}$ and $U^{C,t_2}$ are monotone increasing and 
\begin{equation}\label{form:strongineq}
U^{C,t_1}(t) < U^{\widecheck{C}, \widecheck{t}_0}(t)\,, \quad \text{for all} \quad t \in [0, \infty[.
\end{equation}
\end{enumerate}

Finally, we find the numerical values 
\begin{align}\label{Q1}
\widecheck{C}\approx 1.27267\dots \quad \text{and} \quad \widecheck{t}_0 \approx 0.9036\dots
\end{align}
\end{numerics}

We have not been able to provide a mathematically rigorous proof of the above assertions and only rely on the numerical evidence (which is based on Euler-type simulations in Python with variable step sizes). We therefore consider the above statements rather as hypotheses underlying our subsequent results and we shall carefully point out in the subsequent statements where we rely on this evidence.

For example, for $C=1.25 < \widecheck{C}$ which is case (ii) above we illustrate the situation by Figure \ref{fig:sub}.

\vskip10pt
\begin{figure}[ht]
\includegraphics[width=0.7\textwidth]{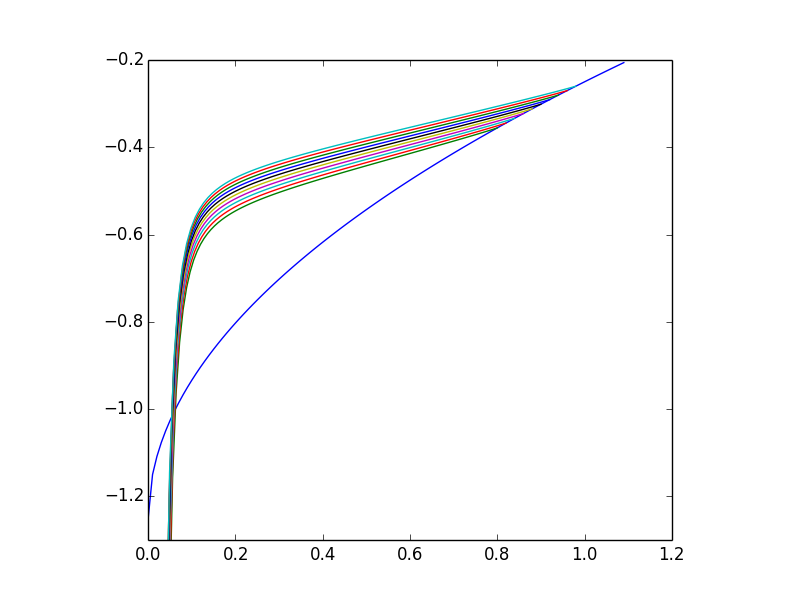}
\caption{The subcritical case $C < \widecheck{C}$: Numerical solutions for $C=1.25$ and various values for
the pasting position $t_0$ in the interval $[0.8,1]$. The graph underneath is $t^{1/2}-C$ through which all solutions cut in the subcritical case when they get close to $0$.}
\label{fig:sub}
\end{figure}

For $C = 1.274 > \widecheck{C}$, which is case (iii) above, we find $t_1 \approx 0.85\dots$ and $t_2 \approx 0.95\dots,$ as illustrated in Figure \ref{fig:super}.

\vskip10pt
\begin{figure}[ht]
\includegraphics[width=0.7\textwidth]{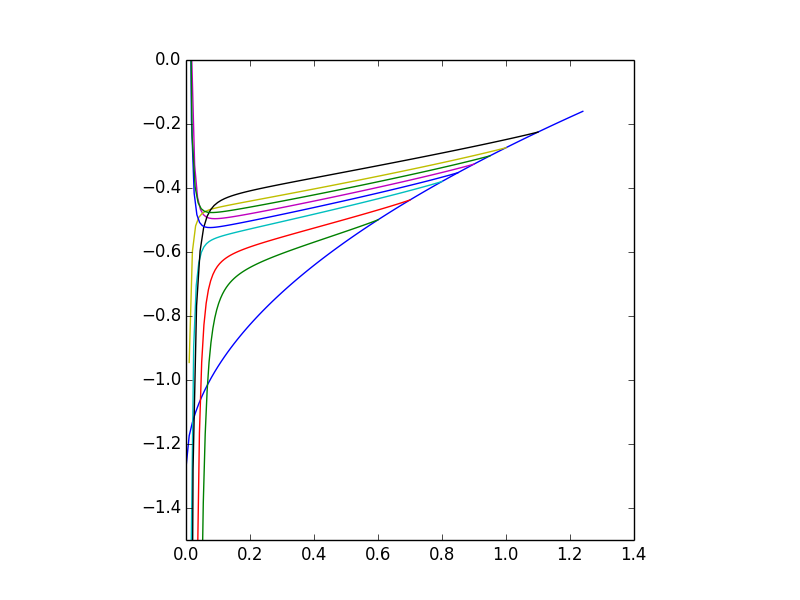}
\caption{The supercritical case $C > \widecheck{C}$: Numerical solutions for $C=1.274$ and and various values
for the pasting position $t_0$ in the interval $[0.6,1.2]$. We find that while solutions for $t_0$ in an interval $(t_1,t_2)$ with $t_1 \approx 0.85$ and $t_2 \approx 0.95$ stay above the graph of $t^{1/2}-C$ (in fact, they tend to $+\infty$, as $t \to 0$), the solutions fall to $-\infty$ for $t \to 0$ if $t_0 \notin [t_1,t_2]$.
At the transition of these two regimes lie the bounded solutions $U^{C,t_1}$ and $U^{C,t_2}$ (which are not explicitly
displayed in the above figure but are squeezed between the neighbouring solutions $U^{C,t}$).}
\label{fig:super}
\end{figure}
\vskip10pt

When $C$ decreases to the critical value $\widecheck{C} \approx 1.27267\dots$ the numerics suggest that the length $t_2 - t_1$ of the intervals $(t_1, t_2)$ decreases to zero and that these intervals shrink to a single point $\widecheck{t}_0 \in \, ]0, \infty[$ for which we find $\widecheck{t}_0 \approx 0.9036\dots$. It is convincing from the numerics that the limiting solution $U^{\widecheck{C}, \widecheck{t}_0}(t)$ then is well-defined for all $t \geq 0$ by letting $U^{\widecheck{C}, \widecheck{t}_0}(0):=\lim_{t \searrow 0} U^{\widecheck{C}, \widecheck{t}_0}(t).$ This function $U^{\widecheck{C}, \widecheck{t}_0}(t)$ is monotone increasing and such that $U^{\widecheck{C}, \widecheck{t}_0}(t)$ lies above the function $U^{\widecheck{C},0}(t)=t^{\frac{1}{2}} - \widecheck{C}.$ Clearly we expect that $U^{\widecheck{C}, \widecheck{t}_0}$ must be the ``right'' solution, which may
be identified with the value function $V$ defined in \eqref{V4} for the optimal constant $\widehat{C}=\widecheck{C}$ and, in particular, that $\widecheck{C} \approx 1.27267\dots$ equals the optimal constant $\widehat{C}$ in the Burkholder-Davis-Gundy inequality \eqref{V1}. We shall subsequently deduce this result more formally.

\section{Identifying the Value-Function}\label{sec:synth}

Admitting the Numerical Evidence \ref{ne} we shall show that the function $U^{\widecheck{C}, \widecheck{t}_0}(t)$, obtained above from the analysis of the OIDE \eqref{V14a}, indeed determines the value function 
$V(t,b,b^*)$ as defined in \eqref{V4} for the constant $\widehat{C}=\widecheck{C}$ and that this constant is indeed the optimal Burkholder-Davis-Gundy constant in inequality \eqref{V1}.

Starting from a solution $U(t)=U^{C,t_0}(t)$ of the OIDE \eqref{V14a} for parameters $C>0$ and $t_0 >0$ such that $U(t)$ extends continuously to a finite value $U(0)$ we may extend this solution (by slight abuse of notation) to a function $U(t,b,b^*)$, defined on $D$, by first letting 
\begin{align}\label{p12}
U(t,b,1):= \int^{\infty}_0 U(t+s) f^{1-|b|} (s)ds, \quad \text{for} \quad 0 \leq |b|< 1,
\end{align}
where $f^h(s)$ is defined in Section 3. For general $(t,b,b^*) \in D$ we use \eqref{p3} to define 
\begin{align}\label{p12a}
U(t,b,b^*)= b^* U\Big(\frac{t}{(b^*)^2}, \frac{b}{b^*},1\Big).
\end{align}

For later reference, we note that $U(t) \geq \sqrt{t} - C$ implies
\begin{align} \label{eqn:lowerestimate}
U(t,b,b^*) \geq \sqrt{t} - Cb^*, \text{for all } (t,b,b^*) \in D
\end{align}

\begin{lemma}\label{5.1}
Fix $C >0$ and $t_0 > 0$ such that $U(t)$ extends continuously to a finite $U(0)$ and admit the Numerical Evidence \ref{ne} (i) and (iii). Let $(B(u))_{u \geq t}$ be a Brownian motion starting at some time $t>0$ at $B(t)=b$ and $B^*(t)=b^*$. The process $U (u,B(u), B^*(u))_{u \geq t}$ is then a local super-martingale. It is a local martingale up to entering the stopping area $S := \{(t,b,b^*) : |b|=b^*, t/(b^*)^2 \geq t_0\}$.
\end{lemma}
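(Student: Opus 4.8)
The plan is to verify the (local) supermartingale property by an Itô/local-time argument on each of the three regions that the extended function $U(t,b,b^*)$ is built from, and then to patch these together along the interfaces $\{|b|=b^*\}$ and $\{t/(b^*)^2 = t_0\}$. By the scaling relation \eqref{p12a} it suffices to work with $b^*=1$, i.e.\ to study $U(u, B(u), 1)$ up to the first time $B^*$ increases past $1$, and then to invoke \eqref{p3} again after each increase of the running maximum; so the heart of the matter is the behaviour of $u \mapsto U(u, B(u), 1)$ while $|B(u)| < 1$, and the behaviour along the diagonal $|b|=b^*$.

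First I would record the regularity of the extended function. On the set $\{(t,b,1): |b|<1\}$, formula \eqref{p12} exhibits $U(t,b,1)$ as a spatial average of the one-dimensional solution $U(\cdot)$ against the hitting density $f^{1-|b|}$; since $U$ solves the OIDE \eqref{V14a} for $t<t_0$ and equals $\sqrt{t}-C$ for $t \geq t_0$, it is $C^1$ (indeed smooth away from $t_0$) and the average inherits enough smoothness in $(t,b)$ that Itô's formula applies on $\{|b|<1\}$. The key algebraic fact is that, by construction, $w(t,b):=U(t,b,1)$ satisfies the backward heat equation $w_t + \tfrac12 w_{bb} = 0$ on $\{|b|<1, \, t>0\}$: indeed $U(t,b,1) = \E^{(t,b,1)}[U(\rho,1,1)]$ where $\rho$ is the first hitting time of $\pm 1$, so $w(u \wedge \rho, B(u\wedge\rho))$ is a martingale by the tower property, which forces $w_t + \tfrac12 w_{bb}=0$ in the interior. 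Hence on $\{|b|<1\}$ the process $U(u,B(u),1)$ is a genuine local martingale, and a fortiori a local supermartingale.

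Next I would treat the diagonal. For $t/(b^*)^2 \geq t_0$ we have $U(t,b^*,b^*) = \sqrt{t} - Cb^*$ by \eqref{V14} and \eqref{p12a}, so on $S$ the process equals $\sqrt{u} - CB^*(u)$, whose drift $\tfrac{1}{2\sqrt u}\,du - C\,dB^*(u)$ is nonpositive because $dB^*$ is nondecreasing — giving the supermartingale (not martingale) property once the stopping region is entered. For $t/(b^*)^2 < t_0$, i.e.\ on the reflecting boundary $\{|b|=b^*\}$ in the non-stop region, the crucial point is that the local-time push at $|b|=b^*$ does not destroy the martingale property: the increment of $B^*$ occurs only on the set $\{|B(u)|=B^*(u)\}$, and there the relevant directional derivative is the $b^*$-derivative of $U$ along $|b|=b^*$. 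Using \eqref{p12}–\eqref{p12a} together with the OIDE \eqref{V14a} one checks, exactly as in the formal discussion around \eqref{V7a}, \eqref{V9a}, \eqref{p4a}, that $U_{b^*}(t,1,1)=0$ for $t<t_0$; this is the content that makes the $dB^*$ term vanish, so that $U(u,B(u),B^*(u))$ remains a local martingale up to hitting $S$. Concretely I would write Itô–Tanaka for $U(u,B(u),B^*(u))$ with the two curvilinear boundaries, collect the $du$–terms (which cancel by the PDE in the interior and the OIDE on the diagonal), the $dB(u)$–term (a local-martingale part), and the $dB^*(u)$–term (which has coefficient $U_{b^*}$ evaluated on $|b|=b^*$, hence $0$ before $S$ and $\leq 0$ after).

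The main obstacle I anticipate is the rigorous justification of the Itô–Tanaka expansion across the two non-smooth interfaces. The function $U$ is only $C^1$ (not $C^2$) across $t=t_0$ — indeed the whole point of the paper is the \emph{non-smooth pasting}, so $U'(t)$ jumps at $t_0$ — and along $|b|=b^*$ one is on the boundary of the domain where $B^*$ supplies a singular (local-time) driving term. I would handle this by the standard localisation: stop before $B^*$ increases, apply the interior result on $\{|b|<1\}$, then restart via scaling; and across $t=t_0$ use that the set of times $u$ with $(u,B(u),B^*(u))$ on the critical parabola $t=t_0(b^*)^2$ \emph{and} $|B(u)|=B^*(u)$ is a.s.\ empty (the intuition already flagged after Lemma~\ref{lem:stopreg}), so the kink is not charged by the occupation measure of the process and the Itô formula goes through with the one-sided derivative $U'(t_0-)$ as stated in the remark following \eqref{V14a}. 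Finiteness and local integrability needed to pass from ``local martingale with nonpositive drift'' to ``local supermartingale'' follow from the bound \eqref{eqn:lowerestimate} together with the growth control $U(t,b,b^*) \leq \sqrt t + C b^*$ inherited from the analogous bound \eqref{p2} on $V$, or directly from $U(t)\le\sqrt t$ which follows from \eqref{V14a} and monotonicity.
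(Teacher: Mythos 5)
Your overall route is the same as the paper's: (a) the heat equation $U_t+\tfrac12 U_{bb}=0$ holds in the interior $\{|b|<b^*\}$ because \eqref{p12} presents $U(t,b,1)$ as a hitting-time average of $U(\cdot,1,1)$; (b) $U_{b^*}(t,1,1)=0$ on the diagonal in the non-stop region, obtained from the OIDE and the scaling identity; (c) $U_{b^*}\leq 0$ on the diagonal in the stopping region, so the $dB^*$-term is nonpositive; (d) a careful Itô/Itô--Tanaka argument across the non-smooth boundaries (in the paper this is packaged as Lemma~\ref{lem:almostmartingale}). So I would credit you with the right strategy and the right key identities.

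Two points, however, need attention. First, your intermediate claim that ``on $S$ the process equals $\sqrt u - CB^*(u)$, whose drift $\tfrac{1}{2\sqrt u}\,du - C\,dB^*(u)$ is nonpositive because $dB^*$ is nondecreasing'' is false as stated: the term $\tfrac{1}{2\sqrt u}\,du$ is strictly \emph{positive} and $dB^*$ is carried by a Lebesgue-null set of times, so that drift is not nonpositive --- indeed $\sqrt u - CB^*(u)$ is not obviously a supermartingale on its own. Moreover $U(u,B(u),B^*(u))$ agrees with $\sqrt u - CB^*(u)$ only on the time-null set $\{|B(u)|=B^*(u),\ u/B^*(u)^2\geq t_0\}$; the correct drift of the relevant process is $(U_t+\tfrac12 U_{bb})\,du + U_{b^*}\,dB^* = U_{b^*}\,dB^*$, exactly as you say in your final Itô--Tanaka synthesis. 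That later paragraph supersedes the erroneous one, but the erroneous one should be deleted, not merely ignored. Second, you assert but do not establish $U_{b^*}\leq 0$ on the diagonal of the stopping region; the paper derives, for $t\geq t_0$,
\begin{align*}
U_{b^*}(t,1,1) = -U_b(t,1,1)-C = \int_0^\infty\bigl[(t+s)^{\frac12}-t^{\frac12}\bigr]g(s)\,ds - C,
\end{align*}
observes that this is monotone decreasing in $t$, and shows it is $\leq 0$ at $t=t_0$ using the fact (from the Numerical Evidence) that $U(t)\geq t^{\frac12}-C$, which forces $U'(t_0{-}) \leq \tfrac{1}{2\sqrt{t_0}}$ and hence, via the OIDE at $t_0{-}$, the required sign. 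That short verification is needed to close your argument.
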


For the proof we need the following Lemma to justify the use of Ito's formula for a function that is not
smooth everywhere but where the Brownian Motion hardly ever touches the set where it is not differentiable.

\begin{lemma}\label{lem:almostmartingale}
Let $W: D \to \R$ be a continuous function and $t_W > 0$ such that 
\begin{enumerate}[(a)]
\item the derivatives $W_t$ and $W_{bb}$ exist and are continuous on the interior of $D$, and
\item $W_t + \frac{1}{2}W_{bb} = 0$,
\item $W_{b^*} \leq 0$, and
\item $W_{b^*}(t,b,|b|) = 0$ for $\frac{t}{b^2} < t_W$.
\end{enumerate}
Define $S_W := \{(t,b,|b|) \in D: \frac{t}{b^2} \geq t_W\}$. For a standard Brownian Motion $B(t)$ let
$\tau_W$ be the first hitting time of $S$ for $(t,B(t),B^*(t))$. Then,
\begin{enumerate}[(i)]
\item $X(t) := W(t,B(t),B^*(t))$ is a local supermartingale, and
\item $X(t \wedge \tau_W)$ is a local martingale.
\end{enumerate}
\end{lemma}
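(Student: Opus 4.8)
The plan is to apply Ito's formula to the process $X(t) = W(t, B(t), B^*(t))$, treating carefully the fact that $W$ is only assumed smooth in $(t,b)$ and Lipschitz (with a one-sided derivative condition) in $b^*$. First I would argue that we may reduce to a bounded setting by localization: let $\sigma_n = \inf\{u \geq t : |B(u)| \geq n \text{ or } u \geq n\}$, so that on $[t, \sigma_n]$ the process stays in a compact subset of the interior of $D$ where $W_t$ and $W_{bb}$ are continuous and bounded, and where the (one-sided, nonpositive) $b^*$-derivative is controlled by the Lipschitz constant. Since we only claim the local (super)martingale property, it suffices to establish the assertion on each stochastic interval $[t, \sigma_n]$.

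Next I would write out the generalized Ito/Tanaka decomposition. The subtle point is that $u \mapsto B^*(u)$ is continuous, nondecreasing, and of bounded variation, and it increases only on the (Lebesgue-null, random) set $\{u : |B(u)| = B^*(u)\}$. Along that set $B(u) = \pm B^*(u)$, so any increment of $B^*$ sees $W$ evaluated at a boundary point $|b| = b^*$. Using assumptions (a) and the Lipschitz-in-$b^*$ property, Ito's formula gives, up to the localization,
\begin{align*}
dX(u) = \Big(W_t + \tfrac{1}{2} W_{bb}\Big)\,du + W_b\, dB(u) + W_{b^*}(u, B(u), B^*(u))\, dB^*(u).
\end{align*}
By assumption (b) the $du$-term vanishes identically, so
\begin{align*}
X(u) - X(t) = \int_t^u W_b(r, B(r), B^*(r))\, dB(r) + \int_t^u W_{b^*}(r, B(r), B^*(r))\, dB^*(r).
\end{align*}
The first integral is a local martingale. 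For the second integral, the integrator $dB^*$ is carried on $\{r : |B(r)| = B^*(r)\}$, so the integrand is effectively $W_{b^*}(r, \pm B^*(r), B^*(r))$, which by assumption (c) is $\leq 0$; hence the second term is a nonincreasing (adapted, continuous) process. This proves (i): $X$ is a local supermartingale.

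For (ii), I would observe that on the stochastic interval $[t, \tau_W]$ the point $(r, B(r), B^*(r))$ lies in $D \setminus S_W$ whenever $|B(r)| = B^*(r)$, i.e.\ whenever $\tfrac{r}{B(r)^2} < t_W$. By assumption (d) the integrand $W_{b^*}$ of the $dB^*$-integral then vanishes at precisely the times that contribute to that integral (up to the already-noted null set), so the drift-like term is identically zero on $[t, \tau_W]$ and $X(u \wedge \tau_W) - X(t)$ equals a stochastic integral against $dB$, hence a local martingale. One should be slightly careful that, while $\{r : |B(r)| = B^*(r)\}$ is Lebesgue-null, the measure $dB^*$ is nonetheless supported on it, so the vanishing of $W_{b^*}$ must be invoked $dB^*$-a.e.; this is exactly guaranteed by (d) together with the strict inequality $\tfrac{r}{B(r)^2} < t_W$ holding on $[t, \tau_W)$ at contact times (equality would put us in $S_W$, contradicting $r < \tau_W$).

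The main obstacle I anticipate is the rigorous justification of the Ito expansion for a function that is merely $C^{1,2}$ in $(t,b)$ and Lipschitz (not $C^1$) in $b^*$: one cannot cite the classical Ito formula directly. I would handle this either by a mollification/approximation argument — approximate $W$ by functions $W^\varepsilon$ that are smooth in all three variables, apply the classical Ito formula, and pass to the limit using the uniform Lipschitz bound in $b^*$, the local boundedness of $W_t$ and $W_{bb}$, and dominated convergence for the stochastic integrals on the localized intervals — or by appealing to the Ito-Tanaka-type formula for functions that are convex/Lipschitz in one coordinate (note $W_{b^*} \leq 0$ gives a sign but not concavity, so one argues via the one-sided derivative being well-defined $dB^*$-a.e.\ along the contact set, which holds because $B^*$ increases only at contact times and there $W$'s behaviour in $b^*$ is governed by the stated one-sided limit). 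Once this decomposition is in hand, the supermartingale and martingale-up-to-$\tau_W$ conclusions follow as above with no further difficulty.
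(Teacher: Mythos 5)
Your formal decomposition $dX(u)=(W_t+\tfrac12 W_{bb})\,du+W_b\,dB(u)+W_{b^*}\,dB^*(u)$ is exactly the paper's starting point (equation \eqref{p5}), and your deduction of (i) and (ii) from it, using (b), (c), (d) and the fact that $dB^*$ is carried on the contact set $\{|B|=B^*\}$, is sound. The issue is that you correctly flag the hard step---rigorously justifying that decomposition for a function which is $C^{1,2}$ in $(t,b)$ but only one-sidedly differentiable in $b^*$---and then leave two alternative routes, neither of which you carry through, and the one you develop furthest (mollification) has a genuine gap. Mollifying $W$ in the $b^*$-direction does preserve $W^\varepsilon_{b^*}\le 0$ (so (i) would survive the limit), but it does \emph{not} preserve the boundary condition (d): $W^\varepsilon_{b^*}(t,b,|b|)$ averages values of $W_{b^*}$ for $b^*$ slightly above $|b|$, which are in general strictly negative, so the mollified boundary derivative is strictly negative rather than zero. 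Recovering the martingale property (ii) would then require a quantitative estimate that these errors, integrated against $dB^*$, vanish in the limit---you do not supply one. The Ito--Tanaka alternative you mention also does not apply directly, as you yourself note that $W_{b^*}\le 0$ gives a sign but not concavity.

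The paper takes a genuinely different route that sidesteps the three-variable Ito formula entirely. After the same localization step, it introduces stopping times $\rho^\epsilon_n=\inf\{t\ge\rho^\epsilon_{n-1}+\epsilon:|B(t)|=B^*(t)\}$ and sets $A^\epsilon=\bigcup_n\llbracket\rho^\epsilon_n,\rho^\epsilon_n+\epsilon\rrbracket$. On the complementary intervals $\llbracket\rho^\epsilon_{n-1}+\epsilon,\rho^\epsilon_n\rrbracket$ the running maximum $B^*$ is \emph{constant}, so there $X(u)=W(u,B(u),c)$ for a fixed $c$ and the classical two-dimensional Ito formula together with (a) and (b) shows $X^\epsilon:=\int\mathbbm{1}_{(A^\epsilon)^c}\,dX$ is a (local) martingale. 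The remainder $Y^\epsilon=X-X^\epsilon$ is then shown (via dominated convergence for stochastic integrals, along a subsequence in $\epsilon$) to converge a.s.\ to $Y^0(t)=\int_0^t\mathbbm{1}_{\{|B|=B^*\}}W_{b^*}\,dB^*$, which is non-increasing by (c) and vanishes on $[t,\tau_W]$ by (d). In other words, the paper never differentiates in $b^*$ at all: it freezes $b^*$ on intervals away from the contact set, applies Ito only in $(t,b)$, and treats the contact-set contribution as a limit. This avoids precisely the regularity and boundary-preservation issues you would still need to resolve. If you want to salvage your route, you would need to show the boundary error from mollification tends to zero when integrated against $dB^*$ on $\llbracket 0,\tau_W\rrbracket$; otherwise, adopting the paper's time-partitioning argument is the cleaner fix.
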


\begin{proof}
This follows at least formally from the assumptions and Ito's formula
\begin{align}
d X(t) =(W_t + \tfrac{1}{2} W_{bb}) dt + W_b dB(t)+ W_{b^*}dB^*(t).
\end{align}
To address this in a more formal way, let $\epsilon >0$ and define the stopping times $(\rho^\epsilon_n)^\infty_{n=0}$ by $\rho^\epsilon_0=0$ and
$$\rho^\epsilon_n=\inf\{t:t \geq \rho^\epsilon_{n-1} + \epsilon \quad \text{and} \quad |B(t)|=B^*(t)\}.$$

We also denote by $A^\epsilon$ the union $\bigcup^\infty_{n=0} \llbracket \rho^\epsilon_n, \rho^\epsilon_n + \epsilon \rrbracket$ which is a predictable subset of $\Omega \times \mathbb{R}_+.$  Denoting by $A^0=\bigcap^\infty_{n=0} A^{{\frac{1}{n}}},$ the set $A^0$ simply equals $\{|B(t)|=B^*(t)\}.$ Fixing $T>0,$ the Lebesgue-measure of $\{\omega\} \times [0,T] \cap A^\epsilon$ tends to zero, for almost all $\omega \in \Omega.$

Fix a bounded stopping time $\tau$ such that $(B(t))_{0 \leq t \leq \tau}$ remains bounded. It follows that $W_t$ as well as $W_{bb}$ also remain bounded on $A^\epsilon \cap \llbracket0,\tau \rrbracket$ so that
$$X^\epsilon(t):= \int^t_0 \mathbbm{1}_{B^\epsilon} d X(s)$$
is a martingale and $B^\epsilon=(\Omega \times \mathbb{R}_+) \backslash A^\epsilon$ is the complement of $A^\epsilon.$ Indeed, it suffices to reason on the stochastic intervals $\llbracket \rho^\epsilon_{n-1} + \epsilon, \rho^\epsilon_n \rrbracket$ and to observe that $B^*(t)$ remains constant on these intervals.

Turning to the remaining part 
$$Y^\epsilon(t) := \int^t_0 \mathbbm{1}_{A^\epsilon} dX(s) = X(t)-X^\epsilon (t)$$
we shall show that along a sequence these processes tend almost surely to the non-increasing process
\begin{align*}
Y^0(t)&=\int^t_0 \mathbbm{1}_{A^0} dX(s)\\
&=\int^t_0\mathbbm{1}_{A^0} W_{b^*}(t,B(t), B^*(t)) d B^*(t).
\end{align*}
Indeed, the dominated convergence theorem for Ito-Integrals yields convergence in probability 
and thus subsequence convergence almost surely. Fixing this sequence of $\varepsilon$'s we can take the process to the
appropriate limit.
\end{proof}

\begin{proof}[Proof of Lemma \ref{5.1}]
It follows from definition \eqref{p12} that, for $(t,b,b^*) \in D$ such that $t > 0$ and $0 \leq |b|<b^*,$ the heat equation
\begin{align*}
U_t+ \frac{1}{2} U_{bb}=0
\end{align*}
is satisfied.

On the boundary, for $t < t_0$, we can apply the definition of $U$ to obtain
\begin{align*}
U_b(t,1,1) + U_{b^*}(t,1,1) &:= \lim_{h \searrow 0} \frac{1}{h} [U(t,1+h,1+h) - U(t,1,1)] \\
&=\lim_{h \searrow 0} \frac{1}{h} [U(t/(1+h)^2) - U(t)] + U(t/(1+h)^2) \\
&= -2tU'(t) + U(t) \\
&= -\int_0^\infty [U(t+s)-U(t)]g(s)ds
\end{align*}
The last equality is exactly the OIDE \eqref{V14a}.  
Applying Lemma \ref{lem:valuederivatives} (ii) one obtains that the last expression is equal to 
\[U_b(t,1,1) := -\lim_{h \searrow 0}  \frac{1}{h} [U(t,1-h,1) - U(t,1,1)], \] 
where all the assumptions of this Lemma are easily checked.
Clearly the left and right derivatives of $U_b$ agree. It follows that $U_{b^*}(t,1,1) = 0$ and more generally that 
$U_{b^*}(t,b,|b|) = 0$ for $t/b^2 < t_0$. For $t \geq t_0$ we can derive as in Lemma \ref{3.2} that
\[U_{b^*}(t,1,1) = -U_b(t,1,1) - C = \int_0^{\infty}[(t+s)^{\frac{1}{2}} - t^{\frac{1}{2}}]g(s)ds-C.\]
This expression is monotone decreasing in $t$, and is necessarily non-positive at $t_0$
so that $U(t,1,1) \geq t^{\frac{1}{2}}-C$ holds. We conclude that, for arbitrary $t > 0$, 
$U_{b^*} \leq 0$.

Having established that $U_{b^*} \leq 0$ and $U_{b^*}(t,b,|b|) = 0$ for $\frac{t}{b^2} < t_0$ we may derive,
at least formally, the assertion of the present lemma from \eqref{P1} and Ito's formula as in \eqref{p5}.

Now, we can conclude using Lemma \ref{lem:almostmartingale}
\end{proof}

Let us now observe the following relations between value functions to the optimal stopping problem
and solutions to the OIDE \eqref{V14a}. 

\begin{lemma}\label{lem:valueisoide}
Let $V^C$ be the value function as defined in \eqref{V4} for a constant $C > 0$ that satisfies 
the inequality \eqref{V1}. Take $t_0 = t_0(C) \in (0,\infty)$ to be the corresponding point separating
$S$ from $NS$ (see \eqref{p2.1}). Then $V^C(t):= V^C(t,1,1)$ satisfies the OIDE \eqref{V14a} for this choice
of $C$ and $t_0$.
\end{lemma}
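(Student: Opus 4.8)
The plan is to match $V^C$ against the two regimes that constitute the OIDE, leaning on the facts already established for the value function. Write $V:=V^C$ and $V(t):=V(t,1,1)$, and let $t_0=t_0(C)$ be the threshold of Lemma~\ref{lem:stopreg}. First I would dispose of the range $t\ge t_0$: since then $t/1^2\ge t_0$, Lemma~\ref{lem:stopreg} gives $(t,1,1)\in S$, and by the definition \eqref{p2.1} of the stopping region this means $V(t)=\sqrt t-C$, which is exactly the initial condition \eqref{V14}; as a byproduct $V$ solves $2tV'(t)=V(t)+C$ on $(t_0,\infty)$, i.e.\ \eqref{V11}.

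The substantive part is the range $0<t<t_0$, where \eqref{V14a} coincides with \eqref{V11a}. I would reproduce the derivation sketched after \eqref{V11a}: for $a>1$, use the scaling identity \eqref{p3} to write
\begin{align*}
aV(t,1,1)-V(t,1,1)&=\big[V(a^2t,a,a)-V(t,a,a)\big]+\big[V(t,a,a)-V(t,1,a)\big]\\
&\quad +\big[V(t,1,a)-V(t,1,1)\big],
\end{align*}
divide by $a-1$, and let $a\searrow1$. After rewriting the first bracket as $a\big(V(t,1,1)-V(t/a^2,1,1)\big)$ and the second as $a\big(V(t/a^2,1,1)-V(t/a^2,1/a,1)\big)$, the three quotients converge to $2tV'(t)$, to $V_b(t,1,1)$, and to $V_{b^*}(t,1,1)$ respectively; Lemma~\ref{lem:vderivatives} then supplies $V_{b^*}(t,1,1)=0$ and $V_b(t,1,1)=-\int_0^\infty[V(t+s)-V(t)]g(s)\,ds$, and rearranging the resulting identity $V(t)=2tV'(t)+V_b(t,1,1)$ gives \eqref{V14a}.

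It remains to close up at the endpoint $t=t_0$, where the OIDE reads \eqref{V14a} with $U'(t_0)$ interpreted as $\lim_{t\nearrow t_0}V'(t)$. For this I would note that $V(\cdot,1,1)$ is continuous (Lemma~\ref{lem:valueprops}(i)) and that $t\mapsto\int_0^\infty[V(t+s)-V(t)]g(s)\,ds$ is continuous up to $t_0$, by dominated convergence using exactly the bounds from the proof of Lemma~\ref{lem:valuederivatives} ($V(t+s)-V(t)\le\sqrt{t+s}-\sqrt t$ from above, a local Lipschitz bound near $s=0$, $\int_0^\infty s\,g(s)\,ds=2$, and negligibility of the tails); hence the right-hand side of \eqref{V14a} extends continuously to $t_0$ and so $2tV'(t)$ has a left limit there, as required. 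One should emphasise that there is deliberately no claim that this limit matches $V'(t_0^+)=1/(2\sqrt{t_0})$ — this is precisely the non-smooth pasting. Finally the bounds \eqref{p2} show $V(\cdot,1,1)$ is finite and continuous on all of $[0,\infty)$, so $U:=V(\cdot,1,1)$ really is a well-defined solution of \eqref{V14}--\eqref{V14a}.

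The main obstacle is the regularity input behind the second step: that $t\mapsto V(t,1,1)$ is differentiable on $(0,t_0)$, that the one-sided derivatives $V_b$ and $V_{b^*}$ behave as asserted (including $V_b$ being continuous in $t$, needed to pass to the limit in the second bracket), and that $\lim_{t\nearrow t_0}V'(t)$ exists. These come from parabolic regularity of $V$ in the interior of the non-stop region, where $V_t+\tfrac12V_{bb}=0$ (cf.\ Lemma~\ref{lem:valuemartingale}), together with the boundary analysis carried out in Lemma~\ref{lem:vderivatives}. Granted those ingredients, the present lemma is essentially bookkeeping over the two regimes, the only new point beyond \eqref{V11}--\eqref{V11a} being the passage to the endpoint $t_0$.
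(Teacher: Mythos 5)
Your proof is correct and follows essentially the same route as the paper's: the boundary derivatives come from Lemma~\ref{lem:vderivatives} and the scaling decomposition \eqref{p3} converts them into the OIDE. The only difference in emphasis is that the paper's proof of this lemma spends most of its effort re-verifying the uniform-integrability hypothesis of Lemma~\ref{lem:valuederivatives} via a stopping-time case distinction, and delegates the scaling computation to the earlier (unnumbered) lemma establishing \eqref{V11}--\eqref{V11a}, whereas you delegate the integrability check to Lemma~\ref{lem:vderivatives} and carry out the scaling computation explicitly.
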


\begin{proof}
For $t\geq t_0$, we have $V^C(t)=t^{\frac{1}{2}}-C$.

For $t < t_0$ denote by $\tau^h=\sigma^h \wedge \tau$ the stopping time as in \eqref{4.12} above, conditionally on $(t,1,1).$ Note that $(V^C(u, B(u), B^*(u)))_{t \leq u \leq \tau^h}$ then is a \emph{uniformly integrable} martingale. To see this,
we make a distinction for $u \leq \nu$ and $u > \nu$ where $\nu$ is the stopping time 
$\nu := \inf \{s : B^*(s)^2 t_0 \leq s\}$:
\begin{enumerate}
\item $u \leq \nu^h := \nu \wedge \tau^h$: The domain of $((u,B(u),B^*(u))$ is bounded by $|B(u)| \leq B^*(u) \leq 1+h$ and
$u \leq \nu^h \leq B^*(\nu^h)^2t_0 \leq (1+h)^2t_0$. Clearly $V^C$ is bounded on this domain.
\item $u > \nu^h$: Here the properties of $V^C$ given in Lemma \ref{lem:valuemartingale} allow us to see that one can rewrite
the process as
\[V^C(u,B(u),B^*(u)) = \E^{(u,B(u),B^*(u))}[\sigma^{\frac{1}{2}}] - CB^*(u)\]
where $\sigma$ is again the first time where $B(u)=B^*(u)$ holds. Note that this holds because $B^*(v)$ is now constant
for $u \leq v \leq \sigma$, and it is clearly the definition of a uniformly integrable martingale, 
provided the conditional expectation
is well defined, which it is by the estimate 
\[\E^{(u,B(u),B^*(u))}[\sigma^{\frac{1}{2}}] \leq u^{\frac{1}{2}}+B^*(u).\]
\end{enumerate} 

Hence the formula
\begin{align*}
V^C(t,1,1)=\E^{(t,1,1)}[V^C(\tau^h, B(\tau^h), B^*(\tau^h)]
\end{align*}
is justified.
This was the only assumption of Lemma \ref{lem:valuederivatives} which is not immediate
from the definition of $V^C$. Now the claim follows from Lemma \ref{lem:valuederivatives}.
\end{proof}

\begin{proposition} \label{5.2}
Admitting the Numerical Evidence \ref{ne} (i) and (ii), the constant $\widecheck{C}$ obtained in \eqref{Q1} equals the optimal constant $\widehat{C}$ for \eqref{V1}, and the function $U^{\widecheck{C}, \widecheck{t}_0} (t,b,b^*)$ obtained in \eqref{p12} and \eqref{p12a} equals the value function $V(t,b,b^*)$ as defined in \eqref{V4} for the constant $\widehat{C}=\widecheck{C}.$

The value $\widehat{t}_0$ associated to $\widehat{C}$ by Lemma \ref{lem:stopreg} equals the constant $\widecheck{t}_0$ in \eqref{Q1}.
\end{proposition}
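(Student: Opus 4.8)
The plan is to establish the two identifications --- $\widecheck{C} = \widehat{C}$ and $U^{\widecheck{C},\widecheck{t}_0} = V$ --- by a sandwiching argument that uses the supermartingale property of $U^{\widecheck{C},\widecheck{t}_0}$ from one side and the explicit near-optimal stopping rule from the other. First I would show $\widehat{C} \le \widecheck{C}$, i.e.\ that $C = \widecheck{C}$ satisfies the inequality \eqref{V1}. By Lemma \ref{5.1} (applied with the parameters $\widecheck{C}, \widecheck{t}_0$, which is legitimate since Numerical Evidence \ref{ne}(i) guarantees $U^{\widecheck{C},\widecheck{t}_0}$ extends continuously to a finite value at $0$), the process $u \mapsto U^{\widecheck{C},\widecheck{t}_0}(u, B(u), B^*(u))$ is a local supermartingale started, for $(t,b,b^*) = (0,0,0)$, at $U^{\widecheck{C},\widecheck{t}_0}(0,0,0) = U^{\widecheck{C},\widecheck{t}_0}(0) < \infty$. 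For a bounded stopping time $\tau$ one localizes, uses the lower bound \eqref{eqn:lowerestimate} $U^{\widecheck{C},\widecheck{t}_0}(t,b,b^*) \ge \sqrt{t} - \widecheck{C}b^*$ together with the uniform integrability coming from boundedness of $\tau$ (so $B^*(\tau) \in L^1$ and $\sqrt{\tau} \in L^1$) to pass to the limit, obtaining
\begin{align*}
\E[\sqrt{\tau} - \widecheck{C}B^*(\tau)] \le \E[U^{\widecheck{C},\widecheck{t}_0}(\tau, B(\tau), B^*(\tau))] \le U^{\widecheck{C},\widecheck{t}_0}(0) < \infty,
\end{align*}
which is exactly \eqref{V1} with a finite constant, hence $\widehat{C} \le \widecheck{C}$.

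Next I would prove the reverse inequality $\widehat{C} \ge \widecheck{C}$ using Numerical Evidence \ref{ne}(ii), which says that for $0 < C < \widecheck{C}$ no choice of $t_0$ yields a solution of \eqref{V14a} bounded below near $0$. Suppose toward a contradiction that some $C$ with $\widehat{C} \le C < \widecheck{C}$ satisfies \eqref{V1}. Then Lemma \ref{lem:valueprops} gives that $V^C$ is finite-valued with $\sqrt{t} - Cb^* \le V^C \le \sqrt{t} + Cb^*$, and Lemma \ref{lem:stopreg} produces the corresponding $t_0(C) \in (0,\infty)$. Lemma \ref{lem:valueisoide} then says $V^C(t) := V^C(t,1,1)$ solves the OIDE \eqref{V14a} for the parameters $C$ and $t_0(C)$, and in particular $V^C(t)$ stays bounded (indeed $0 \le V^C(t,1,1) \le \sqrt{t} + C$) as $t \searrow 0$ --- contradicting Numerical Evidence \ref{ne}(ii). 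Hence $\widehat{C} \ge \widecheck{C}$, and combining the two bounds, $\widehat{C} = \widecheck{C}$.

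It remains to identify the functions themselves, $V = V^{\widehat{C}} = U^{\widecheck{C},\widecheck{t}_0}$. By Lemma \ref{lem:valueisoide}, $V(t,1,1)$ solves \eqref{V14a} for $C = \widehat{C} = \widecheck{C}$ and $t_0 = t_0(\widehat{C})$; by Lemma \ref{lem:valueprops} it is bounded below near $0$. Numerical Evidence \ref{ne}(i) asserts that for $C = \widecheck{C}$ there is a \emph{unique} $t_0$ --- namely $\widecheck{t}_0$ --- for which the OIDE has a solution with a finite limit at $0$; since standard ODE theory gives uniqueness of the solution of \eqref{V14}--\eqref{V14a} once $C$ and $t_0$ are fixed, we conclude $t_0(\widehat{C}) = \widecheck{t}_0$ (which is also the asserted equality $\widehat{t}_0 = \widecheck{t}_0$) and $V(t,1,1) = U^{\widecheck{C},\widecheck{t}_0}(t)$ for all $t > 0$, hence for $t = 0$ by continuity. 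The extension to all of $D$ is then automatic: both $V$ and $U^{\widecheck{C},\widecheck{t}_0}$ satisfy the scaling relation \eqref{p3}/\eqref{p12a}, and on the slice $b^* = 1$, $|b| < 1$ the value $V(t,b,1)$ is obtained by optimally stopping at the first exit from $(-1,1)$ (Lemma \ref{lem:hollow}), which is precisely the defining formula \eqref{p12} for $U^{\widecheck{C},\widecheck{t}_0}(t,b,1)$; so they agree on the slice and therefore everywhere on $D$.

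The main obstacle I anticipate is the first step --- carefully justifying the optional-stopping/localization passage to the limit for the local supermartingale $U^{\widecheck{C},\widecheck{t}_0}(u, B(u), B^*(u))$. One must control the negative part uniformly in order to apply a Fatou-type argument along a localizing sequence: the bound $U^{\widecheck{C},\widecheck{t}_0}(u,b,b^*) \ge \sqrt{u} - \widecheck{C}b^* \ge -\widecheck{C}b^*$ shows the process is bounded below by $-\widecheck{C}B^*(u \wedge \tau)$, which is dominated (up to the fixed bound on $\tau$) by an integrable random variable since $\E[B^*(T)] < \infty$ for any deterministic $T$; this gives the uniform integrability of the negative parts needed to take $\liminf$ through the expectation. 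A secondary but routine point is checking that the ``local'' in Lemma \ref{5.1} can be upgraded via this same domination so that no genuine local-martingale pathology (escape to $+\infty$) occurs. The rest is a matter of assembling the already-proved lemmas and invoking the Numerical Evidence at the two designated places, exactly as flagged in the paper.
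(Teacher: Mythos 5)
Your proof is correct and follows essentially the same two-sided sandwich as the paper: one side via Lemma \ref{lem:valueisoide} plus Numerical Evidence \ref{ne}(ii) to force $\widehat{C}\ge\widecheck{C}$, the other via the local supermartingale property of Lemma \ref{5.1} plus the lower bound \eqref{eqn:lowerestimate} to force $\widehat{C}\le\widecheck{C}$. Worth noting: you are actually more explicit than the paper's own proof in the final identification step --- the paper only derives the inequality $U^{\widecheck{C},\widecheck{t}_0}\ge V^{\widecheck{C}}$ and leaves the equality $U = V$ and $\widehat{t}_0 = \widecheck{t}_0$ implicit, whereas you close the argument by invoking the uniqueness of $\widecheck{t}_0$ in \ref{ne}(i) together with uniqueness of solutions to \eqref{V14}--\eqref{V14a} for fixed $(C,t_0)$, then propagate to all of $D$ via \eqref{p12}--\eqref{p12a}. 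One small phrasing slip: on the slice $b^*=1$, $|b|<1$ the value $V(t,b,1)$ is \emph{not} obtained by ``optimally stopping at the first exit from $(-1,1)$''; rather, the martingale property of Lemma \ref{lem:valuemartingale} up to the first hitting of $S$ (which occurs no earlier than the first hit of $\pm 1$) gives $V(t,b,1)=\E^{(t,b,1)}[V(\rho^{1-|b|},1,1)]=\int_0^\infty V(t+s,1,1)\,f^{1-|b|}(s)\,ds$, which is what matches \eqref{p12}. The conclusion you draw is right; only the justification should cite the martingale property rather than optimality of that exit time.
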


\begin{proof}
To show $\widehat{C} \geq \widecheck{C},$ suppose that $C>0$ is a constant satisfying the Burkholder-Davis-Gundy inequality \eqref{V1}, i.e.~suppose that $C \geq \widehat{C}$. Then by Lemma \ref{lem:valueisoide} we have that $V^C(t):=V^C(t,1,1)$ satifies the OIDE \eqref{V14a} for this choice of $C$ and the corresponding $t_0(C)$ separating $S$ from $NS$. 

As $V^C(t)$ is increasing in $t$ and satisfies $V^C(0) \geq -C$ we conclude from the {\it Numerical Evidence \ref{ne}} (i) and (ii) that $C \geq \widecheck{C}$. This yields $\widehat{C} \geq \widecheck{C}.$

To show conversely that $\widehat{C} \leq \widecheck{C}$ consider the function $U^{\widecheck{C}, \widecheck{t}_0} (t,b,b^*).$ By Lemma \ref{5.1} the process $(U^{\widecheck{C}, \widecheck{t}_0} (t,B(t),B^*(t))_{t \geq 0}$ is a local supermartingale. Hence we have, conditionally on $(t,b,b^*) \in D$ and for each bounded stopping time $\tau \geq t$ and localizing sequence $(\tau_n)_{n=1}^{\infty}$.

\begin{align*}
U^{\widecheck{C}, \widecheck{t}_0} (t,b,b^*) &\geq \mathbb{E}^{(t,b,b^*)} [U^{\widecheck{C}, \widecheck{t}_0} (\tau \wedge \tau_n, B( \tau \wedge \tau_n), B^*(\tau \wedge \tau_n) )]\\
&\geq \mathbb{E}^{(t,b,b^*)} [(\tau \wedge \tau_n)^{\frac{1}{2}} - \widecheck{C} B^* (\tau \wedge \tau_n)]
\end{align*}
where the second inequality derives from \eqref{eqn:lowerestimate}.
In the limit for $n \to \infty$ this yields $U^{\widecheck{C}, \widecheck{t}_0} (t,b,b^*) \geq \mathbb{E}^{(t,b,b^*)} [\tau^{\frac{1}{2}} - \widecheck{C} B^* (\tau)]$. Hence $U^{\widecheck{C}, \widecheck{t}_0} (t,b,b^*)$ dominates the value function $V^{\widecheck{C}}(t,b,b^*)$ as defined in \eqref{V4} for the constant $\widecheck{C}.$ This shows $\widecheck{C} \geq \widehat{C}$ as well as $U^{\widecheck{C}, \widecheck{t}_0} \geq V^{\widecheck{C}} = V^{\widehat{C}}.$
\end{proof}

We can finally summarize these results to proof the main theorem.

\begin{proof}[Proof of Theorem \ref{thm:mainoide}]
Suppose that $C=\widehat{C}$ is the optimal constant for \eqref{V1} and let $\widehat{t}_0 \in \,(0,\infty)$ the corresponding critical value given by Lemma \ref{lem:stopreg}. Then $V=V(t)$ satisfies the OIDE \eqref{V14} and this solution is increasing in $t$ and satisfies $V(0) \geq -C.$

As shown in the previous section there is a minimal $C$ allowing for such a solution, for an appropriately chosen $t_0 \in \,(0,\infty).$ This value of $C$ therefore must coincide with the optimal value $\widehat{C}$ for the Burkholder-Davis-Gundy inequality 
\eqref{V1}.

Conversely if $C$ is chosen such that for some $t_0 \in (0,\infty)$ the OIDE \eqref{V14},\eqref{V14a} has a solution on $[0,T]$ then
the numerical evidence \ref{ne} (ii) gives that $C \geq \bar{C} = \hat{C}$ and so \eqref{V1} holds.
\end{proof}

\begin{remark}
It is interesting to consider,  for a fixed constant $C > \widehat{C}$, the relation between the value-function $V^C(t,b,b^*)$ defined in \eqref{V4} and the corresponding solutions of the OIDE \eqref{V14a}. In this case the numerical evidence \ref{ne} (iii) indicates that there are two bounded solutions $U^{C,t_1}(t)$ and $U^{C,t_2}(t).$ Which of the two is the ``good one'', i.e.~which one equals the value function $V^C(t,1,1)$?

To answer this question, first note that, for $C>\widehat{C}$, we clearly have the monotonicity relation $V^C(t,b,b^*)\leq V^{\widehat{C}}(t,b,b^*)$. It is also easy to see that $t_0(C) < t_0(\widehat{C})=\widehat{t}_0 = \widecheck{t}_0$, where $t_0(C)$ is associated to the value function $V^C(t,b,b^*)$ via Lemma \ref{lem:stopreg}. In other words, the stopping region $S$ for the function $t^{\frac{1}{2}} - CB^*(t)$ in \eqref{V4} is bigger than the stopping region $S$ for the function $t^{\frac{1}{2}} - \widehat{C}B^*(t).$

It follows from the numerical evidence that the value $t_1(C)$ for which we have $t_1(C) < \widehat{t}_0$ is the only candidate  for the ``good'' solution while for $t_2(C)$ for which we have $t_2(C) > \widehat{t}_0,$ we cannot have $U^{C,t_2(C)}(t)=V^C(t,1,1).$ 
We can conclude from Lemma \ref{lem:valueisoide} that the value function $V^C(t,1,1)$ indeed equals the solution
$U^{C,t_1(C)}(t)$ of the OIDE \eqref{V14a}.

The fact that $U^{C,t_2(C)}(t)$ cannot be the ``good'' solution has the following consequence which is interesting in its own right (compare \cite{Sh67}).
\end{remark}

\begin{proposition} \label{prop:infexp}
Admitting the Numerical Evidence \ref{ne} (iii) we have that, for $t_2 >\widehat{t}_0,$ the stopping time
\begin{align}\label{R2a}
\rho=\inf\Big\{s \geq 1:\frac{s}{(B^*(s))^2} \geq t_2\Big\}
\end{align}
satisfies
\begin{align}\label{R2aa}
\mathbb{E}[\rho^{\frac{1}{2}}]=\infty.
\end{align}
\end{proposition}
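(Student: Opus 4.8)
The plan is a proof by contradiction. By the Numerical Evidence~\ref{ne}~(iii) the given $t_2>\widehat{t}_0$ equals $t_2(C)$ for a unique $C>\widehat{C}$; fix this $C$, put $U:=U^{C,t_2}$ and extend it to $D$ via \eqref{p12}--\eqref{p12a}, so that by Lemma~\ref{5.1} the process $(U(u,B(u),B^*(u)))_{u\ge 1}$ is a local supermartingale and a local martingale up to $\tau_S:=\inf\{u\ge 1:(u,B(u),B^*(u))\in S\}$, where $S=\{(t,b,b^*):|b|=b^*,\ t/(b^*)^2\ge t_2\}$. Suppose, towards a contradiction, that $\E[\rho^{1/2}]<\infty$. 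I would first deduce $\E[\tau_S^{1/2}]<\infty$: as $S\subseteq\{t/(b^*)^2\ge t_2\}$ we have $\rho\le\tau_S$ and $\rho/(B^*(\rho))^2\ge t_2$, i.e.\ $B^*(\rho)\le\sqrt{\rho/t_2}$; for $u\ge\rho$ the running maximum stays equal to $B^*(\rho)$ as long as $|B(u)|<B^*(u)$, so $\tau_S$ is exactly the first time after $\rho$ at which $B$ leaves $(-B^*(\rho),B^*(\rho))$, with $B^*(\tau_S)=B^*(\rho)$. The Brownian exit-time estimate then gives $\E[(\tau_S-\rho)^{1/2}\mid\F(\rho)]\le c_0 B^*(\rho)\le c_0\sqrt{\rho/t_2}$ for a universal constant $c_0$, whence $\E[\tau_S^{1/2}]\le(1+c_0 t_2^{-1/2})\,\E[\rho^{1/2}]<\infty$; in particular $\tau_S<\infty$ a.s.\ and $B^*(\tau_S)$ is integrable.

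Next I would prove that $N(u):=U(u\wedge\tau_S,B(u\wedge\tau_S),B^*(u\wedge\tau_S))$, $u\ge 1$, is a uniformly integrable martingale. For this one uses the two-sided bound $\sqrt t-Cb^*\le U(t,b,b^*)\le\sqrt t+c_1 b^*$ on $D$: the lower estimate is \eqref{eqn:lowerestimate}, and the upper one follows from $U(\cdot,1,1)$ being increasing with $U(t,1,1)=\sqrt t-C$ for $t\ge t_2$, combined with the representation \eqref{p12} and the scaling \eqref{p12a}. Since $u\wedge\tau_S\le\tau_S$ and $B^*(u\wedge\tau_S)\le B^*(\tau_S)\le\sqrt{\rho/t_2}$, this gives the uniform bound $-C\sqrt{\rho/t_2}\le N(u)\le\sqrt{\tau_S}+c_1\sqrt{\rho/t_2}\in L^1$. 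Hence $N$ is a genuine martingale, and letting $u\to\infty$ (using $\tau_S<\infty$ a.s.\ and $U(t,b,b^*)=\sqrt t-Cb^*$ on $S$) yields $\E[U(1,B(1),B^*(1))]=\E[\tau_S^{1/2}-CB^*(\tau_S)]$.

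Finally I would derive the contradiction. The local supermartingale property together with \eqref{eqn:lowerestimate} gives, by a Fatou argument for local supermartingales bounded below, $U(t,b,b^*)\ge\E^{(t,b,b^*)}[\tau^{1/2}-CB^*(\tau)]$ for every bounded $\tau\ge t$, hence $U\ge V^C$ pointwise, so $\E[U(1,B(1),B^*(1))]\ge\E[V^C(1,B(1),B^*(1))]$. Conversely each $\tau_S\wedge n$ is a bounded stopping time $\ge 1$, so $V^C(1,B(1),B^*(1))\ge\E^{(1,B(1),B^*(1))}[(\tau_S\wedge n)^{1/2}-CB^*(\tau_S\wedge n)]$; taking expectations and letting $n\to\infty$ (monotone convergence, using $\sqrt{\tau_S},B^*(\tau_S)\in L^1$) gives $\E[V^C(1,B(1),B^*(1))]\ge\E[\tau_S^{1/2}-CB^*(\tau_S)]=\E[U(1,B(1),B^*(1))]$ by the previous paragraph. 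Thus the nonnegative random variable $U(1,B(1),B^*(1))-V^C(1,B(1),B^*(1))$ has zero mean and so vanishes a.s.; since the law of $(B(1),B^*(1))$ charges every non-empty open subset of the interior $\{|b|<b^*\}$, which is dense in $D\cap\{t=1\}$, and both $U$ and $V^C$ are continuous and satisfy the scaling relation \eqref{p3}, it follows that $U\equiv V^C$ on $D$. But by the preceding Remark $V^C(\cdot,1,1)=U^{C,t_1(C)}(\cdot)$, which equals $\sqrt{\cdot}-C$ on the non-degenerate interval $[t_1(C),t_2)$; so $U(t,1,1)=\sqrt t-C$ there, and therefore also $U(t+s,1,1)=\sqrt{t+s}-C$ for all $t\in[t_1(C),t_2)$ and $s>0$. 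Substituting into the OIDE \eqref{V14a} (valid for $U=U^{C,t_2}$ on $(0,t_2]$) forces $C=\int_0^\infty(\sqrt{t+s}-\sqrt t)\,g(s)\,ds$ for every $t\in[t_1(C),t_2)$, which is impossible because the right-hand side is strictly decreasing in $t$. Hence $\E[\rho^{1/2}]=\infty$. The delicate step is the uniform integrability of $N$ in the second paragraph: it rests on the sharp two-sided control of $U$ and on the fact that past time $\rho$ the running maximum is pinned at $\sqrt{\rho/t_2}$, so that the only remaining source of growth of $N$ is $\sqrt{u\wedge\tau_S}\le\sqrt{\tau_S}$, which is integrable exactly by the reduction in the first paragraph; the other ingredients --- the exit-time estimate, the Fatou-type optional stopping, the support argument, and the final OIDE computation --- are routine.
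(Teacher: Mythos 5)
Your proof is correct and follows the paper's overall strategy: reduce $\rho$ to the first entry $\tau_S$ into the stopping region, show that $U^{C,t_2}$ is a uniformly integrable martingale up to $\tau_S$ under the hypothesis $\E[\rho^{1/2}]<\infty$, compare with $V^C$, and derive a contradiction. Within that frame, you make two genuine variations worth pointing out. For the uniform integrability step, the paper obtains $|U^{C,t_2}(\sigma,B(\sigma),B^*(\sigma))|\le M B^*(\tau_S)$ by a compactness argument and then invokes the \emph{reverse} BDG inequality to conclude $B^*(\tau_S)\in L^1$; you instead observe that $B^*$ is pinned at $B^*(\rho)\le\sqrt{\rho/t_2}$ after time $\rho$, so $B^*(\tau_S)=B^*(\rho)\in L^1$ follows directly from $\E[\rho^{1/2}]<\infty$ and the two-sided bound $\sqrt{t}-Cb^*\le U\le\sqrt{t}+c_1b^*$ finishes it -- a small but real simplification that avoids BDG altogether. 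For the endgame, the paper deduces $V^C\ge U^{C,t_2}$ pointwise and then cites from the Numerical Evidence and the preceding Remark that $V^C=U^{C,t_1}\le U^{C,t_2}$ with $U^{C,t_1}\neq U^{C,t_2}$; you instead prove $U^{C,t_2}\equiv V^C$ on $D$ (via the expectation-equality/support-of-law/scaling argument, which is a touch more circuitous than the paper's direct pointwise conditioning but still valid), and then substitute $V^C(\cdot,1,1)=U^{C,t_1}(\cdot)=\sqrt{\cdot}-C$ on $[t_1,t_2)$ into the OIDE \eqref{V14a}, forcing $C=\int_0^\infty(\sqrt{t+s}-\sqrt{t})\,g(s)\,ds$ on a non-degenerate interval, which the strict monotonicity of the right-hand side rules out. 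This last step is a genuine improvement: it gives a self-contained analytic justification for $U^{C,t_1}\neq U^{C,t_2}$, a fact the paper takes from the Numerical Evidence without separate argument.
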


\begin{proof}
Define the stopping time $\tau$ by 
\begin{align} \label{form:stopentry}
\tau := \inf \left\{s \geq 1: \frac{s}{B^*(s))^2} \geq t_2 \text{ and } |B(s)|=B^*(s) \right\}.
\end{align}
Clearly $\tau \geq \rho$, as we may equivalently define
\begin{align} \label{form:stopentryb}
\tau := \inf \{s \geq \rho : |B(s)| = B^*(s)\}.
\end{align}
We claim that $\E[\rho^{\frac{1}{2}}] < \infty$ if and only if $\E[\tau^{\frac{1}{2}}]< \infty$. Indeed, it follows from
\eqref{form:stopentryb} that the law of $\tau-\rho$, conditionally on $(\rho,B(\rho),B^*(\rho))$, is that of the first hitting time
$\sigma$ of the level $B^*(\rho)$ by the absolute value of a Brownian motion $(W(u))_{u \geq 0}$ starting at
$W(0)=B(\rho)$. We may (very crudely) estimate $\E[\sigma] \leq B^*(\rho)^2$.

Noting that at time $\rho$ we have $B^*(\rho)^2 \leq \frac{\rho}{t_2}$ we may estimate
\begin{align*}
\E[\tau-\rho | \rho,B(\rho),B^*(\rho)] \leq \frac{\rho}{t_2}.
\end{align*}
Hence we obtain
\begin{align*}
\E[\tau^{\frac{1}{2}}] &= \E[\rho^{\frac{1}{2}}] + \E[\E[\tau^{\frac{1}{2}}-\rho^{\frac{1}{2}}|\rho,B(\rho),B^*(\rho)]]\\
& \leq \E[\rho^{\frac{1}{2}}] + \E[\E[(\tau-\rho)^{\frac{1}{2}}|\rho,B(\rho),B^*(\rho)]]\\
&\leq \E[\rho^{\frac{1}{2}}] + \E[\E[\tau-\rho|\rho,B(\rho),B^*(\rho)]^{\frac{1}{2}}]\\
&\leq \left(1+\frac{1}{\sqrt{t_2}}\right)\E[\rho^{\frac{1}{2}}],
\end{align*}
which readily shows that $\E[\rho^{\frac{1}{2}}] < \infty$. This implies that $\E[\tau^{\frac{1}{2}}] < \infty$.

So let us suppose that $\E[\tau^{\frac{1}{2}}] < \infty$ and work towards a contradiction.

Define the stopping region $S(t_2)$ relative to $t_2$ as 
\[S(t_2) = \left\{(t,b,|b|) \in D : \frac{t}{b^2} \geq t_2\right\}.\]
and the corresponding non-stopping region by $NS(t_2) = D\backslash S(t_2)$.

We condition on some fixed $(1,b,b^*) \in NS(t_2)$. Note that $\tau$ is the first time when $(t,B(t),B^*(t))_{t \geq 1}$
leaves $NS(t_2)$.

Admitting the Numerical Evidence \ref{ne} (iii), associate to $t_2 >\widehat{t}_0$ the constant $C >\widehat{C}$ such that $U^{C,t_2}(t)$ is a solution of the OIDE \eqref{V14a} which remains bounded as $t \searrow0.$ We write $U^{C,t_2}(t,b,b^*)$ for its extension defined in \eqref{p12a}. In contrast, we denote by $V^C(t,b,b^*)$ the value function as defined in \eqref{V4} for the constant $C$.

The process  $(U^{C,t_2}(t,B(t),B^*(t)))_{1 \leq t \leq \tau}$ is a local martingale by Lemma \ref{5.1}, where the present
$t_2$ corresponds to $t_0$ in the statement of this lemma.

In addition we show that this local martingale is a uniformly integrable martingale up to time $\tau$, 
i.e. the family of random variables
$U^{C,t_2}(\sigma,B(\sigma),B^*(\sigma))$, where $\sigma$ ranges in the stopping times $1\leq \sigma \leq \tau$,
is uniformly integrable. Recall the scaling relation
\[U^{C,t_2}(\sigma,B(\sigma),B^*(\sigma)) = B^*(\sigma)U^{C,t_2}\left(\frac{\sigma}{B^*(\sigma)^2},\frac{B(\sigma)}{B^*(\sigma))},1\right),\]
and note that $\frac{\sigma}{B^*(\sigma)^2}$ remains in the interval $[0,t_2]$ so that, by compactness,
the term $|U^{C,t_2}(\frac{\sigma}{B^*(\sigma)^2},\frac{B(\sigma)}{B^*(\sigma))},1)|$ remains bounded by some constant
$M > 0$. Therefore
\[|U^{C,t_2}(\sigma,B(\sigma),B^*(\sigma))| \leq MB^*(\sigma) \leq MB^*(\tau).\]
If $\mathbb{E}[\tau^{\frac{1}{2}}] < \infty$ we infer from the Burkholder-Davis-Gundy inequality (this time the reverse inequality to \eqref{V1}) that the random variable $B^*(\tau)$ is integrable. Hence the family of random variables $U^{C,t_2}(\sigma, B(\sigma), B^*(\sigma))$ is dominated by the integrable random variable $M B^*(\tau)$ which shows that the local martingale $U^{C,t_2}(t,B(t),B^*(t))_{1 \leq t \leq \tau}$ is of class D and is thus a uniformly integrable martingale.

Hence, conditionally on $(1,b,b^*) \in NS(t_2)$ we obtain
\begin{align}\label{form:condef}
U^{C,t_2}(1,b,b^*) = \E^{(1,b,b^*)}[U^{C,t_2}(\tau,B(\tau),B^*(\tau))].
\end{align}

We now pass to the process $(V^C(t,B(t),B^*(t))_{t \geq 1}$ again conditionally on $(1,b,b^*) \in NS(t_2)$. By Lemma \ref{lem:valuemartingale} we know that this process is a supermartingale. Repeating the above argument, we obtain that this supermartingale is uniformly integrable up to time $\tau$. Hence
\begin{align}\label{form:subdef}
V^C(1,b,b^*) \geq \E^{(1,b,b^*)}[V^C(\tau,B(\tau),B^*(\tau))].
\end{align}

Noting that at time $\tau$ we arrived in the stopping region $S(t_2)$ we obtain
\[U^{C,t_2}(\tau,B(\tau),B^*(\tau)) = V^C(\tau,B(\tau),B^*(\tau)) = \tau^{\frac{1}{2}} - CB^*(\tau).\]
Hence \eqref{form:condef} and \eqref{form:subdef} yield
\[V^C(1,b,b^*) \geq U^{C,t_2}(1,b,b^*),\]
for all $(1,b,b^*) \in D$. As we have seen that $V^C = U^{C,t_1(C)} \leq U^{C,t_2(C)}$, and $U^{C,t_1(C)}$ is
not equal to $U^{C,t_2(C)}$, we arrive at the desired contradiction.
\end{proof}

The above result is complemented by the following estimate in the reverse direction.

\begin{proposition}\label{prop:finexp}
Admitting the Numerical Evidence \ref{ne} (iii), we have, for $t_1 < \widehat{t}_0$, that the stopping time
\begin{align}\label{01}
\rho=\inf\Big\{s \geq 1:\frac{s}{(B^*(s))^2} \geq t_1\Big\}
\end{align}
satisfies
\begin{align}\label{01a}
\mathbb{E}[\rho^{\frac{1}{2}}] < \infty.
\end{align}
\end{proposition}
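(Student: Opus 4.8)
The plan is to run the dichotomy of Proposition \ref{prop:infexp} in the opposite direction, now with the \emph{bounded} OIDE solution attached to $t_1$ in the role that $U^{C,t_2}$ played there. Set $\tau:=\inf\{s\ge\rho:|B(s)|=B^*(s)\}$. Since $|B|<B^*$ on $[\rho,\tau)$, the running maximum is constant there, so $\tau$ is the first entry of $(s,B(s),B^*(s))_{s\ge1}$ into the strict stopping region $S(t_1):=\{(s,b,|b|)\in D:\ s/b^2\ge t_1\}$, and $B^*(\tau)=B^*(\rho)$. On $\{\rho>1\}$ we have $\rho/B^*(\rho)^2=t_1$ and, conditionally on $(\rho,B(\rho),B^*(\rho))$, $\tau-\rho$ is the exit time of $(-B^*(\rho),B^*(\rho))$ by a Brownian motion started at $B(\rho)$; by scaling $\tau-\rho=B^*(\rho)^2\zeta=(\rho/t_1)\zeta$, where $\zeta$ is the exit time of $(-1,1)$ by a Brownian motion started at $y_0:=B(\rho)/B^*(\rho)\in(-1,1)$. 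Hence on $\{\rho>1\}$,
\[\tau=\rho(1+\zeta/t_1),\qquad B^*(\tau)=\sqrt{\rho/t_1},\qquad \tau/B^*(\tau)^2=t_1+\zeta\ge t_1.\]
(Here $\rho<\infty$, hence $\tau<\infty$, almost surely, because $s/B^*(s)^2$ is unbounded in $s$ by Chung's law of the iterated logarithm.)

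By Numerical Evidence \ref{ne}(iii) choose the constant $C>\widehat C$ with $t_1=t_1(C)$. By the Remark preceding Proposition \ref{prop:infexp} the corresponding bounded solution is the ``good'' one: $U^{C,t_1}=V^C$, the value function for $C$, with separating value $t_0(C)=t_1$. By Lemma \ref{lem:valuemartingale} the process $M_t:=V^C(t\wedge\tau,B(t\wedge\tau),B^*(t\wedge\tau))$, $t\ge1$, is a martingale, and since $V^C(s,b,b^*)=\sqrt s-Cb^*$ on $S(t_1)$, the identities above give, on $\{\rho>1\}$,
\[M_\tau=\tau^{1/2}-CB^*(\tau)=\sqrt\rho\,\big(\sqrt{1+\zeta/t_1}-C/\sqrt{t_1}\,\big).\]
The point of using $\tau$ rather than $\rho$ is that $\zeta$ is genuine randomness added after time $\rho$: writing $\widetilde\psi(y):=\E[\,|\sqrt{1+\zeta_y/t_1}-C/\sqrt{t_1}|\,]$ with $\zeta_y$ the exit time of $(-1,1)$ from $y$, one gets $\E[\,|M_\tau|\mid\rho,B(\rho),B^*(\rho)\,]=\sqrt\rho\;\widetilde\psi(y_0)$ on $\{\rho>1\}$, and $\widetilde\psi$ is continuous on $[-1,1]$ and strictly positive everywhere: on $(-1,1)$ because $\zeta_y$ is non-degenerate, and at the endpoints because $\zeta_y\to0$ there, so that $\widetilde\psi(\pm1)=|1-C/\sqrt{t_1}|=C/\sqrt{t_1}-1>0$ (using $C>\widehat C>1>\sqrt{t_1}$, as $t_1<\widehat t_0<1$). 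Hence $c:=\min_{[-1,1]}\widetilde\psi>0$, so $\E[|M_\tau|]\ge c\,\E[\sqrt\rho\,\mathbbm{1}_{\{\rho>1\}}]$ while $\E[\sqrt\rho\,\mathbbm{1}_{\{\rho=1\}}]\le1$; \emph{provided} the martingale $M$ is of class~D up to $\tau$ (so that $M_\tau\in L^1$), this yields $\E[\sqrt\rho]<\infty$, which is the claim.

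The main obstacle is precisely this proviso: one must show that $M$ is of class~D up to $\tau$. In Proposition \ref{prop:infexp} the analogous uniform integrability was inherited from the standing hypothesis $\E[\tau^{1/2}]<\infty$; here it has to be earned, and the crude estimate $|M_{t\wedge\tau}|\le\kappa(\sqrt{t\wedge\tau}+B^*(t\wedge\tau))$ --- from the scaling relation and the boundedness of $V^C(\cdot,\cdot,1)$ on $\{|y|\le1,\ 0\le r\le t_1\}$ --- is circular, since both terms on the right are themselves controlled only by $\E[\tau^{1/2}]$. I therefore expect one must localise the running maximum: with $\nu_K:=\inf\{s:B^*(s)=K\}$, apply optional stopping to $M$ at $\tau\wedge\nu_K$ --- which is legitimate because $\E[(\tau\wedge\nu_K)^{1/2}]<\infty$ (on $\{\tau\le\nu_K\}$ one has $\rho=t_1B^*(\rho)^2\le t_1K^2$) and because $|V^C(\nu_K,\pm K,K)|\le K\sup_{[0,t_1]}|U^{C,t_1}|$ on $\{\tau>\nu_K\}$ --- and then establish the decay estimate $K\,\P[\tau>\nu_K]\to0$, i.e.\ that $B^*$ reaching a high level $K$ before the path enters $S(t_1)$ has probability $o(1/K)$; letting $K\to\infty$ this upgrades $M$ to class~D and forces $\E[|M_\tau|]<\infty$. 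Proving this tail estimate is the delicate step, and it is exactly here that the strict inequality $t_1<\widehat t_0$ and the boundedness of $U^{C,t_1}$ at $t=0$ --- which fails for the non-admissible pasting values $t_0\in(t_1(C),t_2(C))$, where the OIDE solutions explode --- must be used in an essential way, presumably through the sub-/super-harmonic comparison afforded by $U^{C,t_1}$ itself.
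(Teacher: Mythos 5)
Your proposal has a genuine gap, which you yourself flag: you reduce the claim to the assertion that $M_t:=V^C(t\wedge\tau,B(t\wedge\tau),B^*(t\wedge\tau))$ is of class $D$ up to $\tau$, but you do not prove it, and the localisation-plus-tail-estimate programme you sketch ($K\,\P[\tau>\nu_K]\to0$) is left open. That estimate is not merely ``delicate''---without it the argument does not close, since the inequality $\E[|M_\tau|]\ge c\,\E[\sqrt\rho\,\mathbbm{1}_{\{\rho>1\}}]$ only finishes the proof if $\E[|M_\tau|]<\infty$, which is exactly what class $D$ would supply. Trying to extract integrability of $M_\tau$ from a single local martingale whose terminal value you want to bound is inherently circular, as you note.

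The paper avoids this circularity altogether by working with \emph{two} local martingales simultaneously. It compares $(V^{\widehat C}(u,B(u),B^*(u)))$ and $(U^{C,t_1}(u,B(u),B^*(u)))$ along a common localising sequence $\tau_n\uparrow\tau$. From the strict separation $V^{\widehat C}(t)\ge U^{C,t_1}(t)+\alpha$ for all $t\ge0$ (Numerical Evidence (iii), \eqref{form:strongineq}, using compactness on $[0,\widehat t_0]$), the scaling identity, and the local martingale identity at each finite $\tau_n$, one obtains
\begin{equation*}
V^{\widehat C}(1,b,b^*)-U^{C,t_1}(1,b,b^*) \;\ge\; \alpha\,\E^{(1,b,b^*)}[B^*(\tau_n)],
\end{equation*}
and then Fatou (applied to the nonnegative process $B^*$) passes to the limit $n\to\infty$ \emph{without} any uniform integrability hypothesis. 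Since the left-hand side is also uniformly bounded by some $\beta<\infty$ (the difference equals $C-\widehat C$ for $t\ge\widehat t_0$ and is continuous on $[0,\widehat t_0]$), one gets $\E^{(1,b,b^*)}[B^*(\tau)]\le(\beta/\alpha)\,b^*$, hence $\E[\rho^{1/2}]<\infty$. In short: the paper replaces the unproven class-$D$ assertion for a single process by a two-sided comparison whose left side is bounded \emph{a priori} and whose right side is controlled by Fatou. Your use of the strict positivity of $\widetilde\psi$ is a nice observation, but it is not needed once the comparison argument is in place, and it does not substitute for the missing uniform integrability. If you want to salvage your route, you would have to turn the tail estimate $K\,\P[\tau>\nu_K]\to0$ into a theorem, and the most natural tool for doing that is precisely the comparison with $V^{\widehat C}$ that the paper uses---so you would in effect be reproducing the paper's argument.
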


\begin{proof}
Similarly as in the proof of the previous proposition, we define
\[\tau := \inf \{s \geq 1: \frac{s}{(B^*(s))^2} \geq t_1 \text{ and } |B(s)|=B^*(s) \}.\]
We shall show that
\begin{equation}\label{form:finite}
\E[\tau^{\frac{1}{2}}] < \infty
\end{equation}
which will imply \eqref{01a}.

We condition on $(1,b,b^*)$ in the non-stop region $NS(t_1)$ defined as in the preceding proof, and will show
\begin{equation}\label{form:linbound}
\E^{(1,b,b^*)}[\tau^{\frac{1}{2}}] \leq Kb^*
\end{equation}
for some constant $K > 0$, which will imply \eqref{form:finite} by integrating
over the values $B(1)=b$ and $B^*(1)=b^*$.

We associate to $t_1 < \widehat{t}_0$ the corresponding $C>\widehat{C}$ such that the solution $U^{C,t_1}(t)$ of the OIDE \eqref{V14a} remains bounded (Numerical Evidence \ref{ne} (iii)).

Using the Numerical Evidence \ref{ne} (iii) there is some $\alpha >0$ such that the solutions $U^{C,t_1}(t)$ of the OIDE \eqref{V14a} and the value function $V^{\widehat{C}}(t) = U^{\widecheck{C},\widecheck{t}_0}(t)$ for the optimal constant $\widehat{C}=\widecheck{C}$ are separated by some $\alpha >0$, i.e.
\begin{align}\label{02a}
V^{\widehat{C}}(t) \geq U^{C,t_1}(t) + \alpha, \quad \text{for all} \quad t \geq 0.
\end{align}
Indeed, for $t \geq \widehat{t}_0$ we have $U^{C,t_1} = t^{\frac{1}{2}}-C$ and $U^{\widehat{C},\widehat{t}_0}(t)=t^{\frac{1}{2}}-\widehat{C}$. For $t \in [0,\widehat{t}_0]$ we have $U^{\widehat{C},\widehat{t}_0}(t) < U^{C,t_0}(t)$ by \eqref{form:strongineq}
so that by compactness we obtain a separating constant $\alpha>0$.

More generally, we obtain from \eqref{p12}
\begin{align}\label{02aa}
V^{\widehat{C}}(t,b,1) \geq U^{C,t_1}(t,b,1) + \alpha, \quad \text{for all} \quad (t,b,1) \in D.
\end{align}

Similarly as in the above proof we consider, conditionally on $(1,b,b^*)$, the processes 
\[(U^{C,t_1}(u,B(u),B^*(u)))_{1 \leq u \leq \tau} \text{ and } (V^{\widehat{C}}(u,B(u),B^*(u)))_{1 \leq u \leq \tau}.\]

Both are local martingales up to time $\tau$. Let $(\tau_n)_{n=1}^{\infty}$ be a sequence of localizing, bounded stopping times, $\tau_n \geq 1$, increasing to $\tau$.

\begin{align*}
V^{\widehat{C}}(1,&b,b^*)=\E^{(1,b,b^*)}[V^{\widehat{C}}(\tau_n, B(\tau_n), B^*(\tau_n)] \\
&= \E^{(1,b,b^*)}\left[ B^*(\tau_n) V^{\widehat{C}}\left(\tfrac{\tau_n}{B^*(\tau_n)^2}, \tfrac{B(\tau_n)}{B^*(\tau_n)},1\right)\right] \\
&\geq \E^{(1,b,b^*)}\left[ B^*(\tau_n) U^{C,t_1}\left(\tfrac{\tau_n}{B^*(\tau_n)^2}, \tfrac{B(\tau_n)}{B^*(\tau_n)},1\right)\right]
+\alpha\E^{(1,b,b^*)}[ B^*(\tau_n)] \\
&= U^{C,t_1}(1,b,b^*) +\alpha\E^{(1,b,b^*)}[ B^*(\tau_n)].
\end{align*}

Hence, letting $n \to \infty$, for each $(1,b,b^*)$
\[V^{\widehat{C}}(1,b,b^*) - U^{C,t_1}(1,b,b^*) \geq \alpha\E^{(1,b,b^*)}[ B^*(\tau)].\]
Using the scaling relation again, we get
\begin{align}\label{eqn:expectationbound}
V^{\widehat{C}}\left(\frac{1}{(b^*)^2},\frac{b}{b^*},1\right) - U^{C,t_1}\left(\frac{1}{(b^*)^2},\frac{b}{b^*},1\right) \geq \frac{\alpha}{b^*}\E^{(1,b,b^*)}[ B^*(\tau)].
\end{align}

Observe that we can also find a bound $\beta$ such that 
\begin{align*}
\beta \geq V^{\widehat{C}}(t) - U^{C,t_1}(t), \quad \text{for all} \quad t \geq 0
\end{align*}
as the difference equals $C - \widehat{C}$ for $t \geq \widehat{t_0}$ and is bounded for the compact interval
$[0,\widehat{t_0}]$. This directly yields $\beta$ as a bound on
$V^{\widehat{C}}(t,b,1) - U^{C,t_1}(t,b,1)$ which shows that
the left hand side of \eqref{eqn:expectationbound} remains uniformly bounded.
This yields \eqref{form:linbound} with $K = \beta/\alpha$ and finishes the proof.
\end{proof}

\begin{remark}
As regards the limiting case when we define $\rho$ in \eqref{01} by replacing $t_1$ by the critical value $\widehat{t}_0,$ we conjecture that we obtain $\mathbb{E} [\rho^{\frac{1}{2}}]=\infty.$ But we were not able to prove this result.
\end{remark}

\section{A pointwise version of one of Davis' inequalities}

The value function $V$ allows to derive a {\it pointwise} version of the Burkholder-Davis-Gundy inequality \eqref{V1}, which holds true in an almost sure sense rather than in expectation as stated in \eqref{V1}. This line of argument, inspired by the idea of robust superhedging from mathematical finance, is well-known (see e.g. \cite{BeSi15} and \cite{BeNu14}).

\begin{theorem}
Denote by $V=V^{\widehat{C}}$ the value function \eqref{V4} associated to the optimal constant $\widehat{C}$ and consider
the Brownian motion $B=(B(t))_{t\geq 0}$ with its (right continuous, saturated)  natural filtration $(\F(t))_{t \geq 0}$.

There is a predictable process $H(t)$ satisfying $\E[\int_0^T H^2(t) dt] < \infty$, for each $T > 0$, given a.s.\ by
\begin{equation} \label{form:hedge}
H(t) = V_b(t,B(t),B^*(t))
\end{equation}
for Lebesgue almost all $t>0$, such that, for every bounded stopping time $\tau$,
\begin{equation}\label{form:hedging}
\tau^{\frac{1}{2}}-CB^*(\tau) \leq \int_0^\tau H(t) dB(t), \qquad a.s..
\end{equation}
\end{theorem}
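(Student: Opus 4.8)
The plan is to exploit the supermartingale property of the value process together with It\^o's representation. First I would set $M(t) := V(t, B(t), B^*(t))$ and recall from Lemma~\ref{5.1} (applied with $C = \widehat{C}$, $t_0 = \widehat{t}_0$, so that $U^{\widecheck{C},\widecheck{t}_0} = V$ by Proposition~\ref{5.2}) that $M$ is a local supermartingale with the It\^o dynamics
\begin{align*}
dM(t) = \big(V_t + \tfrac{1}{2} V_{bb}\big)(t,B(t),B^*(t))\, dt + V_b(t,B(t),B^*(t))\, dB(t) + V_{b^*}(t,B(t),B^*(t))\, dB^*(t).
\end{align*}
On the non-stop region the heat equation $V_t + \tfrac12 V_{bb} = 0$ holds and $V_{b^*}$ vanishes on the part of $\{|b| = b^*\}$ that the Brownian motion actually charges (by Lemma~\ref{lem:vderivatives}(i) and the singularity of $dB^*$); on the stop region $V(t,b,b^*) = \sqrt{t} - \widehat{C} b^*$, whose drift $\tfrac{1}{2\sqrt t} - 0$ together with the $dB^*$ term $-\widehat{C}\, dB^*(t) \le 0$ makes $M$ a genuine supermartingale there. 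So the finite-variation part of $M$ is non-increasing, and we may write $M(t) = M(0) + \int_0^t H(s)\, dB(s) - A(t)$ with $A$ non-decreasing, $A(0)=0$, and $H(s) = V_b(s, B(s), B^*(s))$ for Lebesgue-a.e.\ $s$. The predictability of $H$ and the local square-integrability follow from the continuity of $V_b$ on the interior of $D$ plus the scaling bound $|V_b| \le$ const on $\{t/(b^*)^2 \le t_0\}$ and $|V_b| \le \widehat C$ on the stop region; I would record $\E[\int_0^T H^2 dt] < \infty$ by localizing and using that $B^*$ has moments.

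Next I would turn the supermartingale inequality into the pointwise bound. For a bounded stopping time $\tau$,
\begin{align*}
\tau^{\frac{1}{2}} - \widehat{C} B^*(\tau) = V(\tau, B(\tau), B^*(\tau)) \le M(0) + \int_0^\tau H(t)\, dB(t),
\end{align*}
where the equality is because $(\tau, B(\tau), B^*(\tau))$ lies in the stop region precisely when the left side equals $V$ there, and in the non-stop region $V \ge \sqrt t - \widehat C b^*$ by \eqref{eqn:lowerestimate}; the inequality uses $M(\tau) = M(0) + \int_0^\tau H\, dB - A(\tau) \le M(0) + \int_0^\tau H\, dB$. Finally $M(0) = V(0,0,0) = 0$: this follows from the scaling relation \eqref{p3}, which gives $V(0,0,0) = aV(0,0,0)$ for all $a>0$, forcing $V(0,0,0)=0$ (it is finite by Lemma~\ref{lem:valueprops}). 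This yields \eqref{form:hedging} exactly.

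The main obstacle I expect is making the It\^o decomposition rigorous despite $V$ not being $C^{1,2}$ everywhere --- it is only $C^{1,2}$ on the interior of the non-stop region and equals an affine-in-$(\sqrt t, b^*)$ function on the stop region, with a possible kink in the $t$-direction at $t = t_0$ along the surface $|b| = b^*$. The paper already provides the tool for this: Lemma~\ref{lem:almostmartingale} handles the vanishing of the $dB^*$ contribution by showing the Brownian motion spends Lebesgue-negligible time on $\{|b|=b^*\}$, and the ``non-smooth pasting'' discussion shows the $t_0$-kink is never seen by $B$ (the two equalities $|B(t)| = B^*(t) = (t/t_0)^{1/2}$ a.s.\ never hold simultaneously), so no local-time term appears there either. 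Concretely I would localize by the stopping times $\rho^\epsilon_n$ from Lemma~\ref{lem:almostmartingale}, apply the classical It\^o formula on each good interval, pass $\epsilon \to 0$ to identify the limiting non-increasing process $A$, and thereby extract $H$; care is needed that the integrand $V_b$ is well-defined in the limit (a.e.) and that the approximating stochastic integrals converge, which is exactly the dominated-convergence-for-It\^o-integrals argument already used in the proof of Lemma~\ref{lem:almostmartingale}.
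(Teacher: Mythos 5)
Your proposal is correct in essence and arrives at the same decomposition as the paper, but it is organized in the reverse order, which creates a subtle but real gap in the $L^2$ step. The paper first invokes Doob--Meyer on the supermartingale $X = V(t,B(t),B^*(t))$ to get $X = M - A$ \emph{without any smoothness of $V$}, then proves $M$ is a genuine square-integrable martingale (Lemma~\ref{lem:squint} in the appendix, using a localization along the hitting times of $\pm 2^n$ combined with Meyer's inequality $\|M\|_2^2 \le 18\|X\|_\infty^2$ for uniformly bounded supermartingales), then gets the predictable $L^2$ integrand $H$ from the martingale representation theorem, and only at the end uses It\^o (justified via Lemma~\ref{lem:almostmartingale}) to \emph{identify} $H$ with $V_b$ Lebesgue-a.e. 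You instead try to read off $H = V_b$ directly from It\^o and then bound $\E[\int_0^T H^2\,dt]$ from a claimed uniform bound on $V_b$. That bound is plausible (by $0$-homogeneity of $V_b$ plus compactness on $\{t/(b^*)^2 \le t_0,\, b^*=1\}$ and decay of $V_b(t,b,1)$ as $t \to \infty$), but you only assert it on the stop boundary $|b|=b^*$ and on the pre-$t_0$ region, leaving the bulk region $\{t/(b^*)^2 > t_0,\ |b| < b^*\}$ unaddressed; and in any case proving such a bound is more work than the paper's route, which needs only the elementary two-sided bound $\sqrt{t} - Cb^* \le V \le \sqrt{t} + Cb^*$ and Meyer's theorem. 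Your explicit derivation of \eqref{form:hedging} (via $\sqrt\tau - CB^*(\tau) \le V(\tau,\cdot) = M(0) + \int_0^\tau H\,dB - A(\tau) \le \int_0^\tau H\,dB$ and $M(0) = V(0,0,0)=0$ by scaling) is correct and in fact spells out a step the paper leaves implicit. So: same strategy, a different order of attack; the paper's order is the one that makes the $L^2$ claim essentially free.
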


Before giving the proof we observe the well-known fact that \eqref{form:hedging} trivially implies \eqref{V4} by taking expectations on both sides of \eqref{form:hedging}.

\begin{proof}
Lemma \ref{lem:valuemartingale} states that the continuous process 
\[X(t) = V(t,B(t),B^*(t))\]
is a super-martingale,
starting at $X(0) = V(0,0,0)=0$. sBy Doob-Meyer we may decompose $X$ as
\begin{equation}\label{form:decomp}
X=M-A
\end{equation}
where $M$ is a continuous local martingale and $A$ is a continuous non-decreasing predictable process, and $M(0)=A(0)=0$.

In fact, $M$ is a square integrable martingale as we will show in Lemma \ref{lem:squint} in the appendix.

By martingale representation we may find a predictable process $H$ with $\E[\int_0^TH(t)^2dt] <\infty$, 
for each $T > 0$ such that
\[M(t) = \int_0^t H(u) dB(u). \]

By applying Ito to both sides of \eqref{form:decomp} we obtain the relation \eqref{form:hedge} which must
hold true, for $\P$-almost each $\omega$ and for Lebesgue almost all $t$ (the null set depending on $\omega$).
The formal application of Ito's formula can be justified using the result in Lemma \ref{lem:almostmartingale}.
\end{proof}

\section{BDG-Inequalities for general \texorpdfstring{$0 < p < 2$}{p between 0 and 2}}

The above procedure can be easily modified to obtain similar results for the inequalities
$\E[\tau^{\frac{p}{2}}] \leq C_p\E[(B^*(\tau))^p]$ with $0 < p < 2$. Lemma \ref{lem:hollow} and \ref{lem:stopreg} stay essentially the
same, with a different scaling given by
\[V(a^2t,ab,ab^*) = a^pV(t,b,b^*).\]
This leads to the PDE
\[2tV_t+bV_b+b^*V_{b^*}=pV\]
and the OIDE
\[2tV_t(t) = pV(t) + \int_0^{\infty} [V(t+s) - V(t)]g(s)ds\]
for $0 \leq t \leq t_0$ and the starting condition $V(t) = t^{\frac{p}{2}} - C$ for $t \geq t_0$.

In principle a similar analysis as in the present paper should provide explicit numerical values $\widehat{C}(p)$ and
$\widehat{t}_0(p)$, in dependence of $0 < p < 2$. We leave this task to future research.

On the other hand, for $p>2$ the present method does not seem to apply and some new idea is needed.

\section{Relation to the Burkholder constant \texorpdfstring{$\widehat{C}=\sqrt{3}$}{square root of 3}}

In this section we consider martingales also allowing for jumps and we focus (w.l.o.g.) on martingales
$(M_n)_{n=0}^N$ defined on a finite probability space $\Omega$ (see Lemma \ref{8.2} below). The BDG
inequality \eqref{V1} reads in this context as
\begin{equation}\label{form:bdgdiscrete}
\E[[M,M]_N^{\frac{1}{2}}] \leq C\E[M_N^*],
\end{equation}
where $[M,M]_n = \sum_{j=1}^{n}(M_j-M_{j-1})^2$ denotes the quadratic variation process.
It was shown by D. Burkholder \cite{Bu02} that in this context the sharp constant $\widehat{C}$ equals
$\widehat{C}=\sqrt{3}$.

One may ask for a deeper reason why we obtain a different sharp constant in \eqref{V1} for continuous
martingales as for martingales also having jumps. One reason is that the value function $V$ fails
to have a certain concavity property. 

Fix a point $d=(t,b,b^*) \in D$ as well as $\alpha>0,\beta >0$. Define the points
$d_{\alpha},d_{\beta} \in D$ by

\begin{align*}
d_{\alpha} &= (t+\alpha^2,b+\alpha,\max(b^*,|b+\alpha|)),\\
d_{\beta} &= (t+\beta^2,b-\beta,\max(b^*,|b-\beta|)).
\end{align*}

We also define $p=\frac{\beta}{\alpha+\beta}$ and $q=\frac{\alpha}{\alpha+\beta}$.

\begin{proposition} \label{prop:badpair}
There exist $x \in D$ as well as $\alpha>0$, $\beta>0$ such that
\begin{equation}\label{form:badpair}
V(d) < pV(d_{\alpha})+qV(d_{\beta}).
\end{equation}
\end{proposition}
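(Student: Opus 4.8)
The plan is to exhibit a concrete pair of ``successor'' points at which the chord lies strictly above the graph of $V$, and the natural place to look is near the critical surface where the non-smooth pasting occurs. Recall that along the diagonal slice $V(t)=V(t,1,1)$ the function satisfies the OIDE \eqref{V14a} for $t<t_0$ and equals $t^{1/2}-C$ for $t\ge t_0$, and that Proposition~\ref{prop:mainoide}'s analysis (together with the computation in Lemma~\ref{3.2}) tells us the left and right derivatives of $V$ disagree at $t_0$: from \eqref{V11} we get $\lim_{t\searrow t_0}2t_0 V'(t_0)=V(t_0)+C=t_0^{1/2}$, whereas from \eqref{V11a} we get $\lim_{t\nearrow t_0}2t_0V'(t_0)=V(t_0)+\int_0^\infty[V(t_0+s)-V(t_0)]g(s)\,ds$, and the numerics (non-smooth pasting) show these differ, with the left derivative strictly larger. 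So $V(\cdot,1,1)$ has a concave kink at $t_0$, i.e.\ $V'(t_0^-)>V'(t_0^+)$.

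First I would choose the base point $x=d$ on the critical parabola, say $d=(t_0,1,1)$ (or, by scaling \eqref{p3}, any $d=(t_0 b^2,b,b)$). For small $\alpha,\beta>0$ the successor points are $d_\alpha=(t_0+\alpha^2,1+\alpha,1+\alpha)$ and $d_\beta=(t_0+\beta^2,1-\beta,1-\beta)$; note both have $|b|=b^*$ because we moved the Brownian coordinate up to (or symmetrically through) the running maximum, so on both of them we may use the one-variable description $V(d_\alpha)=V(t_0+\alpha^2,1+\alpha,1+\alpha)=(1+\alpha)V\!\big(\tfrac{t_0+\alpha^2}{(1+\alpha)^2},1,1\big)$ and likewise for $d_\beta$. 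The weights are $p=\beta/(\alpha+\beta)$, $q=\alpha/(\alpha+\beta)$, so that $p d_\alpha+q d_\beta$ has first coordinate $t_0+p\alpha^2+q\beta^2$ and second coordinate $1$; the point here is that $t_0+p\alpha^2+q\beta^2>t_0$, so along the Brownian martingale the average of the two successor states sits strictly on the $t>t_0$ side of the kink while the starting point sits exactly at the kink.

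The key computation is a Taylor expansion in $\alpha,\beta$. Write $\phi(t):=V(t,1,1)$ and also expand the scaling factor: $V(d_\alpha)=(1+\alpha)\phi\big(t_0-2t_0\alpha+O(\alpha^2)\big)$. Because $\phi$ is only one-sided differentiable at $t_0$ and the argument $t_0-2t_0\alpha$ is to the \emph{left} of $t_0$, we get $V(d_\alpha)=\phi(t_0)+\alpha\phi(t_0)-2t_0\alpha\,\phi'(t_0^-)+o(\alpha)$; similarly the argument $\tfrac{t_0+\beta^2}{(1-\beta)^2}=t_0+2t_0\beta+O(\beta^2)$ lies to the \emph{right}, giving $V(d_\beta)=\phi(t_0)-\beta\phi(t_0)-2t_0\beta\,\phi'(t_0^+)+o(\beta)$. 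Plugging into $pV(d_\alpha)+qV(d_\beta)-V(d)$ and using $p\alpha=q\beta=\tfrac{\alpha\beta}{\alpha+\beta}$, the $\phi(t_0)$ terms and the $\pm\alpha\phi(t_0),\pm\beta\phi(t_0)$ terms cancel to leading order (by the martingale/harmonicity structure of $V$ in $b$), and one is left with a leading term proportional to $-2t_0\,\tfrac{\alpha\beta}{\alpha+\beta}\big(\phi'(t_0^-)-\phi'(t_0^+)\big)+\text{(lower order)}$. The bracket is \emph{positive} by the concave kink, so for a suitable choice of the ratio $\alpha/\beta$ and $\alpha,\beta$ small enough the whole expression is strictly positive, which is exactly \eqref{form:badpair}.

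The main obstacle is controlling the error terms in this expansion carefully enough to be sure the kink term dominates: one must verify that the two ``harmonic in $b$'' second-order contributions (coming from the $\tfrac12 V_{bb}$ curvature and from the second-order part of the scaling factor, both of order $\alpha^2$, $\beta^2$, $\alpha\beta$) do not conspire to cancel or overwhelm the first-order kink term of order $\tfrac{\alpha\beta}{\alpha+\beta}$. Since $\tfrac{\alpha\beta}{\alpha+\beta}$ is of the same order as $\alpha,\beta$ when $\alpha\asymp\beta$ but the curvature terms are genuinely quadratic, shrinking $\alpha,\beta$ proportionally makes the kink term dominate; making this rigorous requires a clean one-sided Taylor estimate for $\phi$ at $t_0$ (which follows from \eqref{V11}, \eqref{V11a} giving $C^1$-regularity of $\phi$ on each side up to $t_0$) and a uniform second-order bound for $V$ in the $b$-direction on a neighbourhood (which follows from the heat-equation regularity $V_t+\tfrac12V_{bb}=0$ established in the proof of Lemma~\ref{5.1}). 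Once these estimates are in place the sign of the leading term is decisive and the proposition follows.
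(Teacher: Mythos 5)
Your approach is genuinely different from the paper's, but it contains a decisive error.  The paper's proof is an \emph{indirect} argument: it invokes Burkholder's result that $\sqrt{3}$ is the sharp BDG constant for martingales with jumps, applies Lemma~\ref{8.2} to reduce to dyadic martingales, and then observes that if $(V([M,M]_n,M_n,M_n^*))_n$ were a supermartingale for \emph{every} dyadic martingale, one would obtain the constant $\widehat{C}\approx 1.27\dots <\sqrt{3}$ for jump martingales as well --- a contradiction.  One step of a dyadic martingale at which the supermartingale property fails is precisely a triple $(d,d_\alpha,d_\beta)$ violating \eqref{form:badpair}.  The authors explicitly note that a direct numerical/analytic search near the kink (your strategy) would also prove the proposition, but say ``this is not our point''; the indirect proof is chosen because it ties the non-concavity to the continuous-vs.-jump discrepancy.

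Now the gap in your argument.  Your successor point $d_\beta$ is wrong: from $d=(t_0,1,1)$ and the definition $d_\beta=(t+\beta^2,\,b-\beta,\,\max(b^*,|b-\beta|))$, the running maximum cannot decrease, so for small $\beta$ one has $d_\beta=(t_0+\beta^2,\,1-\beta,\,1)$, \emph{not} $(t_0+\beta^2,\,1-\beta,\,1-\beta)$.  The point $d_\beta$ therefore does \emph{not} lie on the diagonal $|b|=b^*$, and you cannot reduce $V(d_\beta)$ to the one-variable function $\phi(t)=V(t,1,1)$ via the scaling relation \eqref{p3}; you would instead need the off-diagonal representation $V(t_0+\beta^2,1-\beta,1)=\int_0^\infty\phi(t_0+\beta^2+s)\,f^\beta(s)\,ds$, whose expansion in $\beta$ is not what you wrote (in particular $\mathbb{E}[(\rho^\beta)^2]=O(\beta)$, so the Jensen-type correction enters at the same order as your would-be leading term).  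There is also a sign error in the direction of the kink: since $\phi(t)\geq t^{1/2}-C$ with equality for $t\geq t_0$ (Numerical Evidence \ref{ne}(i)), the one-sided derivatives satisfy $\phi'(t_0^-)\leq\phi'(t_0^+)$; the kink is \emph{convex}, not concave as you assert, and one also checks that your intermediate expansion of $V(d_\beta)$ carries the wrong sign on the $\phi'(t_0^+)$ term.  These two sign mistakes happen to cancel, so your final claimed sign is right, but this is fortuitous, and the fundamental problem --- that $d_\beta$ is off the diagonal --- means the Taylor computation you wrote down does not establish the proposition.
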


Of course, we could verify the above proposition in a trivial way by numerically analyzing the function $V(t,b,b^*)$
and detecting explicitly some $d, \alpha$ and $\beta$. It is also clear where we should search for such a
``bad'' triple $(x,\alpha,\beta)$, namely in a neighborhood of the ``kink'' related to the ``non-smooth pasting'' (Figure \ref{fig:sub} and \ref{fig:super})
which displays a strong form of non-concavity.

But this is not our point. The purpose of the above statement is to show how the non-concavity \eqref{form:badpair} of the
value function $V$ is related to the difference between the case of continuous martingales and the case of martingales with jumps.

Also note that the equations $V_t+\frac{1}{2}V_{bb}=0$ in the interior of $D$ and $V_{b^*}=0$ on the non-stopping boundary
of $D$  (i.e. \eqref{V9a} and \eqref{p5} above), imply that in the (properly interpreted) case of \emph{infinitesimal} increments $\alpha$ and $\beta$ we do have a
``$\leq$'' in \eqref{form:badpair} above. This is the message of Lemma \ref{lem:valuemartingale}.

\begin{proof}[Proof of Proposition \ref{prop:badpair}]
Admitting the subsequent lemma, we consider a dyadic martingale $(M_n)_{n=0}^N$ starting at $M_0=0$.

Let us fix some notation: The underlying probability space is given by 
\[\Omega = \{(\omega_1,\dots,\omega_N) : \omega_n \in \{-1,1\}\}\] and the filtration $(\F_n)_{n=0}^N$ is given by $\F_n = \sigma(\omega_1,\dots,\omega_n)$. Consider the process
\[(X_n)_{n=1}^N = (V([M,M]_n,M_n,M_n^*))_{n=0}^N\] 
where $V=V^{\widehat{C}}$ is the value function \eqref{V4} associated
to the optimal constant $\widehat{C} \approx 1,27267\dots$ for continuous processes.

It may happen that $(X_n)_{n=0}^N$ is a super-martingale. In this case
\begin{align*}
0 = X_0 & \geq \E[V([M,M]_N,M_N,M_N^*)] \\
& \geq \E[[M,M]_N^{\frac{1}{2}} - \widehat{C}M_N^*],
\end{align*}
so that we obtain the inequality
\[\E[[M,M]_N^{\frac{1}{2}}] \leq \widehat{C}\E[M_N^*]. \]

However, we know that $\widehat{C} \approx 1,27267 \dots$ is smaller than the sharp constant $\widehat{C} = \sqrt{3}$ for
martingales with jumps so that there must exist some dyadic martingale $(M_n)_{n=1}^N$ such that the corresponding process
$(X_n)_{n=1}^N$ fails to be a supermartingale.

This means that there is some $0 \leq n \leq N-1$ and $\omega^{(n)} = (\omega_1,\dots,\omega_n)$ such that -- with slight abuse
of notation -- we find
\[d := ([M,M]_n(\omega^{(n)}),M_n(\omega^{(n)}),M_n^*(\omega^{(n)})) \]
as well as
\begin{align*}
d_{\alpha} &= ([M,M]_{n+1}(\omega^{(n)},1),M_{n+1}(\omega^{(n)},1),M_{n+1}^*(\omega^{(n)},1)),\\
d_{\beta} &= ([M,M]_{n+1}(\omega^{(n)},-1),M_{n+1}(\omega^{(n)},-1),M_{n+1}^*(\omega^{(n)},-1)).
\end{align*}
such that inequality \eqref{form:badpair} holds true.
\end{proof}

For the following Lemma recall that a martingale is \emph{dyadic} if the increment 
$M_{n+1}-M_n$ can attain at most two values, conditionally
on $\sigma(M_1,\dots,M_n)$,.

\begin{lemma}\label{8.2}
For a constant $C>0$ the following are equivalent:
\begin{enumerate}[(i)]
\item Every dyadic martingale $(M_n)_{n=0}^N$ satisfies \eqref{form:bdgdiscrete}.
\item Every martingale $(M_n)_{n=0}^N$ defined on
a finite probability space satisfies \eqref{form:bdgdiscrete}.
\item Every $L^2$ bounded martingale $(M_t)_{0 \leq t \leq T}$ satisfies \eqref{form:bdgdiscrete}.
\end{enumerate}
\end{lemma}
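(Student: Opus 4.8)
The plan is to establish the chain of implications $(iii) \Rightarrow (ii) \Rightarrow (i) \Rightarrow (iii)$, with the first two being essentially trivial and the last one — a discretization/approximation argument — carrying the real content. First I would observe that $(iii) \Rightarrow (ii)$ is immediate: a martingale $(M_n)_{n=0}^N$ on a finite probability space can be viewed as a discrete-time, hence $L^2$-bounded, martingale (embed the time set $\{0,1,\dots,N\}$ into $[0,T]$ by a piecewise-constant interpolation, noting that both $[M,M]_N$ and $M_N^*$ are unchanged by this reparametrization, since the quadratic variation of a pure-jump discrete martingale is exactly $\sum (M_j - M_{j-1})^2$). Likewise $(ii) \Rightarrow (i)$ is trivial since dyadic martingales form a subclass of finite-probability-space martingales.

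The substantive step is $(i) \Rightarrow (iii)$: deduce the inequality for all $L^2$-bounded continuous-time martingales (in particular continuous ones) from the dyadic case. Here I would proceed in two stages. First, reduce a general $L^2$-bounded martingale $(M_t)_{0\le t\le T}$ to a finite-probability-space martingale by conditioning on a finite sub-$\sigma$-algebra: sample $M$ along a finite deterministic time grid $0 = t_0 < t_1 < \dots < t_k = T$, and then replace each conditional law of the increment $M_{t_{j+1}} - M_{t_j}$ given $\F_{t_j}$ by a finitely supported approximation with the same conditional mean, controlling both $\E[[M,M]_T^{1/2}]$ and $\E[M_T^*]$ via $L^2$-convergence and the Burkholder-Davis-Gundy inequality in its classical (non-sharp) two-sided form to get uniform integrability of the maximal functions. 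The bracket process only increases under refinement of the grid by the conditional Jensen/orthogonality of martingale increments, so $\E[[M,M]_T^{1/2}]$ on the grid converges up to $\E[[M,M]_T^{1/2}]$ for the continuous martingale as the mesh tends to zero. Second, reduce an arbitrary finite-probability-space martingale to a dyadic one: this is the standard trick of splitting a multinomial increment into a sequence of binary increments — if $M_{n+1} - M_n$ takes values $v_1,\dots,v_m$ with conditional probabilities $p_1,\dots,p_m$, insert intermediate times at which the martingale makes binary moves realizing the same terminal conditional distribution while keeping the conditional expectation a martingale at each sub-step. One must check that this binary refinement does not change $[M,M]_N$ — and it does not, because the sum of squared binary increments telescopes to the squared total increment precisely when the intermediate steps are themselves martingale increments, i.e. $\E[(\sum \text{sub-increments})^2] = \sum \E[(\text{sub-increment})^2]$; actually a cleaner route is to note the binary tree can be built so that each non-terminal node has a deterministic value and only the terminal split is random at each original step, making the pathwise bracket contribution equal to $(M_{n+1}-M_n)^2$ — nor does it change $M_N^*$, since intermediate values lie between consecutive realized values. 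Hence $(i)$ applied to the dyadic refinement gives \eqref{form:bdgdiscrete} for the original finite martingale, and combining with the first stage gives \eqref{form:bdgdiscrete} for all $L^2$-bounded martingales.

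The main obstacle I anticipate is the bracket-process bookkeeping in the binarization step: one has to be careful that replacing an $m$-ary branching by a cascade of binary branchings leaves $\sum_j (M_j - M_{j-1})^2$ pathwise equal (or at least equal in expectation, which suffices since \eqref{form:bdgdiscrete} only involves $\E[[M,M]_N^{1/2}]$ — but even the square-root makes pathwise equality the safe choice), and that the running maximum is genuinely unaffected. The standard fix, which I would spell out, is to arrange each binary split so that one of the two children inherits the parent's value exactly (a ``do nothing or jump'' step), so that along any path the nonzero squared increments are in bijection with the original nonzero squared increments; then both $[M,M]_N$ and $M_N^*$ are literally unchanged path by path. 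The time-discretization half is routine given the classical BDG inequality, so I would keep it brief, citing the monotone convergence of $\E[[M,M]_{t\text{-grid}}^{1/2}]$ and uniform integrability of $(M^*)$ under the weak constant $3/2$ already available from \cite{BeSi15}.
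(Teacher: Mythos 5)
Your overall plan matches the paper's: $(iii)\Rightarrow(ii)\Rightarrow(i)$ trivially, a time-discretization/finite approximation for $(ii)\Rightarrow(iii)$ (you approximate conditional laws, the paper projects onto finite sub-$\sigma$-algebras, either works), and a binarization for $(i)\Rightarrow(ii)$. The binarization is where the content lies, and the fix you propose for the bookkeeping does not work. A ``do nothing or jump'' binary step in which one child inherits the parent's value is not a nondegenerate martingale step: if the conditional law is supported on $\{v,w\}$ with conditional mean $v$, then either $w=v$ or the weight on $w$ is zero. The paper's own construction, $\tilde M_j - \tilde M_{j-1} := M_1\mathbbm{1}_{A_j}$ for a balanced partition, does produce a martingale with the correct law of $([M,M],M^*)$, but it is \emph{not} dyadic in the sense defined in the paper: on the event that no earlier $A_i$ has fired, the conditional law of $M_1\mathbbm{1}_{A_j}$ given $\sigma(\tilde M_1,\dots,\tilde M_{j-1})$ has the three atoms $0$, $a_j$, $b_j$ (the two nonzero values of $M_1$ on $A_j$).

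In fact, the stronger assertion both you and the paper rely on — that some dyadic $\tilde M$ has $([\tilde M,\tilde M],\tilde M^*)$ equal in law (or pathwise equal) to $([M,M],M^*)$ — is simply false. Take $M_0=0$ and $M_1\in\{-1,0,1\}$ with probabilities $\frac14,\frac12,\frac14$, so that $(M_1^*,[M,M]_1)$ equals $(0,0)$ with probability $\frac12$ and $(1,1)$ with probability $\frac12$. For a dyadic $\tilde M$ with $\tilde M_0=0$, the filtration $\sigma(\tilde M_1,\dots,\tilde M_{j-1})$ remains trivial until the first nondegenerate step; at that step the two-point conditional law must straddle $0$, hence $\tilde M^*>0$ almost surely thereafter. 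So no dyadic $\tilde M$ can have $\tilde M^*=0$ with probability $\frac12$, and the desired $\tilde M$ does not exist. A correct proof of $(i)\Rightarrow(ii)$ must therefore either settle for a one-sided comparison (find dyadic $\tilde M$ with $\E[[\tilde M,\tilde M]^{1/2}]\geq\E[[M,M]^{1/2}]$ and $\E[\tilde M^*]\leq\E[M^*]$), or bypass the pathwise embedding altogether and argue, as Burkholder does, that the extremal ratio is attained in the limit along dyadic martingales; neither your proposal nor the paper supplies such an argument.
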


\begin{proof}
The equivalence (ii) $\Leftrightarrow$ (iii) is standard but for the convenience of the
reader we will recall the argument for the non-trivial implication (ii) $\Rightarrow$ (iii).

First, we can reduce the problem to discrete $L^2$-martingales: Fix an
$L^2$-bounded martingale $M = (M_t)_{0 \leq t \leq 1}$, based on a filtered
probability space $(\Omega,\F,(\F_t)_{0 \leq t \leq T},\P)$ and consider the
martingales $(M_{k2^{-n}})_{k=0}^{2^n}$ for $n \in \N$. If they fulfill \eqref{form:bdgdiscrete},
then letting $n \to \infty$ yields that $M$ satisfies \eqref{form:bdgdiscrete}.

Now, fix an $L^2$-bounded martingale $M = (M_n)_{0 \leq n \leq N}$ on a filtered
probability space $(\Omega,\F,(\F_n)_{0 \leq n \leq N},\P)$. Consider the net of
finite subfiltrations $(\F_n)_{0 \leq n \leq N}$ of this filtration and their associated
martingales $M^\G = (M^\G_n)_{n=0}^{N}$ (i.e. $M^\G_n = \E[M_n | \G_n]$). By (ii), every $M^\G$
satisfies \eqref{form:bdgdiscrete}. The $L^2$ limit of $M^\G$ is $M$ and (iii) follows.

The implication (ii) $\Rightarrow$ (i) is trivial.

To show (i) $\Rightarrow$ (ii) first observe that without loss of generality
we can assume $M_0$ to be deterministic. First, we can translate the martingale
such that it has mean $0$. Second, if $M_0$ is random, then define a martingale
$(M'_n)_{n=0}^{N+1}$ with $M'_0 := 0$ and $M'_i := M_{i-1}$ for $i > 0$. Then
$[M',M']_{N+1}=[M,M]_N$ and $M^{'*}_{N+1}=M^*_N$.

Now, suppose first that $(M_n)_{n=0}^1$ is just a one step martingale on a
finite probability space $\Omega$ with $M_0 = 0$.
We then have that $M_1$ is a finitely valued random variable with $\E[M_1]=M_0=0$.

By possibly passing to a bigger (still finite) $\Omega$ we may find a partition $(A_1, \dots, A_p)$
of $\Omega$ such that $M_1$ takes at most $2$ values on each $A_j$ and
\[\E[M_1\mathbbm{1}_{A_j}] = 0, \quad j=1,\dots,p. \]
We now define a dyadic martingale $(\tilde{M}_j)_{j=0}^p$ by $\tilde{M}_0 = 0$ and
\[\tilde{M}_j - \tilde{M}_{j-1} := M_1\mathbbm{1}_{A_j}, \quad j=1,\dots,p. \]
Clearly the variables $([M,M]_1,M^*_1)$ and $([\tilde{M},\tilde{M}]_p,\tilde{M}^*_p)$ are equal in law.
This shows that, at least for $N=1$, we may associate to every finitely values martingale $(M_n)_{n=0}^1$
a dyadic martingale $(\tilde{M}_j)_{j=0}^p$ such that \eqref{form:bdgdiscrete} holds true for $M$ if and only if it does
so for $\tilde{M}$.

It is rather obvious how to continue the above construction in an inductive way so that we may associate to each finitely
valued martingale $(M_n)_{n=0}^N$ a dyadic martingale $(\tilde{M}_j)_{j=0}^p$ such that
$([M,M]_N,M^*_N)$ and $([\tilde{M},\tilde{M}]_p,\tilde{M}^*_p)$ are equal in law.
This readily shows (i) $\Rightarrow$ (ii).
\end{proof}

\appendix

\section{The Martingale Property of the Value Process and Square Integrability}

\begin{proof}[Proof of Lemma \ref{lem:valuemartingale}]

We denote the function appearing on the right side of \eqref{p2} by $v(t,b^*)$:
\begin{align*}
v(t,b^*):= t^{\frac{1}{2}}- C b^*, \qquad t,b^* \geq 0.
\end{align*}
For fixed $(t,b,b^*) \in D$ and $t \leq T$ we denote by $V^T(t,b,b^*)$ the value function defined similarly as in \eqref{V4}, but where we only allow for stopping times $\tau \in \mathcal{T}(t)$ which are bounded by $T$. Clearly $V^T(t,b,b^*)$ increases to $V(t,b,b^*),$ as $T \to \infty$, pointwise for $(t,b,b^*) \in D.$ Also note that $V(T,b,b^*)=v(T,b^*).$ Fix $(t,b,b^*)$ and a bounded stopping time $t \leq \sigma \leq \tau$. We have to show that
\begin{align}
V(t,b,b^*)= \mathbb{E}^{(t,b,b^*)}[ V(\sigma, B(\sigma), B^*(\sigma) ) ]
\end{align}

By the monotone convergence theorem it will suffice to show that
\begin{align}\label{P1}
\lim_{T\to\infty} V^T(t,b,b^*)= \lim_{T\to \infty} \mathbb{E}^{(t,b,b^*)}[V^T(\sigma^T, B(\sigma^T), B^*(\sigma^T)]
\end{align}
for $\sigma^T := \sigma \wedge \tau^T$ where $\tau^T$ is the stopping time defined conditionally on $(t,b,b^*)$ by
\begin{align*}
\tau^T=\inf \{u \geq t:V^T(u,B(u), B^*(u))=v(u,B^*(u))\}.
\end{align*}
We then have that $\tau^T$ is bounded by $T$ and increases a.s.~to $\tau$. The crucial property is
\begin{align*}
V^T(t,b,b^*)= \mathbb{E}^{(t,b,b^*)}[v(\tau^T, B^*(\tau^T))],
\end{align*}
and, more generally, for any stopping time $t\leq \rho \leq \tau^T,$
\begin{align*}
V^T(\rho, B(\rho), B^*(\rho) ) = \mathbb{E}[v(\tau^T, B^*(\tau^T))| \mathcal{F} (\rho)].
\end{align*}
This classical result can be found in \cite[Theorem 2.2]{PeSh06}.
Putting this together and taking $\rho = \sigma^T$, we obtain \eqref{P1}.

The proof of the supermartingale property which still holds true, after time $\tau$ is identical with an inequality instead of an equality.

\end{proof}

We can even show that the value process is bounded in $L^2$ up to some fixed time $T$:

\begin{lemma}\label{lem:squint}
The supermartingale $(X(t))_{0 \leq t \leq T}$ given by
\[X(t) := V(t,B(t),B^*(t)) \]
is uniformly bounded from above and bounded in $L^2$. Furthermore the martingale component of
its Doob-Meyer decomposition $X(t) = M(t) - A(t)$ is also bounded in $L^2$ and we obtain the
following quantitative estimates for every stopping times $\sigma$ with $0 \leq \sigma \leq T$:
\begin{enumerate}[(i)]
\item $X(\sigma) \leq T^{\frac{1}{2}},$
\item $\E[X(\sigma)^2] \leq KT$,
\item $\E[M(\sigma)^2] \leq \E[M(T)^2] < \infty$.
\end{enumerate}
\end{lemma}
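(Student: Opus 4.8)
The plan is to prove the three estimates in the stated order, since (ii) rests on (i) and (iii) combines both with It\^o's formula. For (i) I would first establish the pointwise bound $V(t,b,b^*)\le\sqrt t$ on all of $D$; then $X(\sigma)=V(\sigma,B(\sigma),B^*(\sigma))\le\sqrt\sigma\le\sqrt T$, which is (i) and the uniform upper bound. Using that $V$ is non-increasing in $|b|$ (as noted in the proof of Lemma~\ref{lem:vderivatives}) it suffices to bound $V(t,0,b^*)$, and by the scaling relation \eqref{p3} this reduces to $V(s,0,1)\le\sqrt s$; the latter follows from \eqref{eq:Vchar} via $1\vee B^*(\tau)\ge B^*(\tau)$, the subadditivity $\sqrt{s+\tau}\le\sqrt s+\sqrt\tau$, and the Burkholder--Davis--Gundy inequality $\E[\sqrt\tau]\le C\,\E[B^*(\tau)]$ (valid since $C\ge\widehat{C}$), which together give $V(s,0,1)\le\sqrt s+\sup_\tau\E[\sqrt\tau-CB^*(\tau)]\le\sqrt s$. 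For (ii), combining (i) with the lower bound $V(t,b,b^*)\ge\sqrt t-Cb^*$ from \eqref{p2} yields $-CB^*(\sigma)\le X(\sigma)\le\sqrt T$, hence $X(\sigma)^2\le T+C^2B^*(\sigma)^2\le T+C^2B^*(T)^2$, and Doob's $L^2$ maximal inequality gives $\E[B^*(T)^2]\le 4\,\E[B(T)^2]=4T$, so $\E[X(\sigma)^2]\le(1+4C^2)T$.

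For (iii) I would argue as follows. Since $|X(\sigma)|\le\sqrt T+CB^*(T)\in L^1$ for every stopping time $\sigma\le T$, the supermartingale $X$ is of class (D) on $[0,T]$, so in the Doob--Meyer decomposition $X=M-A$ the component $M$ is a uniformly integrable continuous martingale with $M(0)=0$ and $A$ is integrable; then $\E[A(T)]=-\E[X(T)]\le 2C\sqrt T<\infty$. Applying It\^o's formula to $X^2=(M-A)^2$ (a routine one-dimensional application, as $X$ is already a continuous semimartingale and $[X]=\langle M\rangle$) gives $\langle M\rangle_t=X(t)^2-2\int_0^tX\,dM+2\int_0^tX\,dA$. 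I would localise along $\tau_n:=\inf\{s:|B(s)|\ge n\text{ or }\langle M\rangle_s\ge n\}$, which makes $M^{\tau_n}$ square-integrable and keeps $X$ bounded on $[0,T\wedge\tau_n]$, so that $\E[\int_0^{T\wedge\tau_n}X\,dM]=0$; and since $X\le\sqrt T$ and $A$ is nondecreasing, $\E[\int_0^{T\wedge\tau_n}X\,dA]\le\sqrt T\,\E[A(T)]$. Together with (ii) this bounds $\E[\langle M\rangle_{T\wedge\tau_n}]$ uniformly in $n$, so by monotone convergence $\E[\langle M\rangle_T]<\infty$. Hence $M$ is a genuine $L^2$-bounded martingale on $[0,T]$ with $\E[M(T)^2]=\E[\langle M\rangle_T]<\infty$, and $\E[M(\sigma)^2]=\E[\langle M\rangle_\sigma]\le\E[\langle M\rangle_T]=\E[M(T)^2]$ for every stopping time $\sigma\le T$.

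The main obstacle is (iii): to get $\E[A(T)]<\infty$ one must first know that $M$ is truly a uniformly integrable martingale rather than merely a local one, which is why the class-(D) property has to be checked at the outset; and the It\^o/localisation argument has to be set up so that the stochastic-integral term vanishes in expectation, which is what forces the localising sequence to control both $B^*$ and $\langle M\rangle$ simultaneously. Parts (i) and (ii) are comparatively routine; the only slightly non-obvious input there is the monotonicity of $V$ in $|b|$.
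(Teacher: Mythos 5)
Your proof is correct, but it takes a genuinely different route from the paper's, most notably in part (iii). For (i) you derive $V(t,b,b^*)\le\sqrt t$ directly from the BDG inequality via \eqref{eq:Vchar} and scaling, whereas the paper extracts the same bound from the monotonicity estimate in Lemma~\ref{lem:Vdec} (used inside Lemma~\ref{lem:stopreg}); both work, and yours is arguably the more transparent deduction. For (ii) the two arguments coincide. The real divergence is (iii): the paper decomposes $X=\sum_n X_n$ along the dyadic exit times $\sigma_n=\inf\{t:|B(t)|=2^n\}\wedge T$, applies Meyer's $L^2$ bound for uniformly bounded supermartingales (Theorem~\ref{A.2}) to each bounded piece $X_n$, and then sums the resulting $L^2$ norms against the exponentially decaying tail $\P[\sigma_{n-1}<T]$, exploiting orthogonality of the martingale increments. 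You instead observe that $X$ is of class~(D) on $[0,T]$ (since $|X(\sigma)|\le\sqrt T+CB^*(T)\in L^1$), so $M$ is a true UI martingale and $\E[A(T)]=-\E[X(T)]<\infty$, and you then apply It\^o to $X^2=(M-A)^2$ with a localising sequence controlling both $B^*$ and $\langle M\rangle$ to kill the stochastic-integral term in expectation; combined with $X\le\sqrt T$ and part~(ii), this bounds $\E[\langle M\rangle_T]$ and hence $\E[M(T)^2]$. Your route avoids Meyer's inequality and the dyadic bookkeeping entirely, and actually gives a cleaner explicit constant; the one place where care is needed, and where you handled it correctly, is that the one-sided bound $X\le\sqrt T$ suffices for $\int X\,dA$ since $dA\ge 0$, while the negative part of $X$ is only controlled after stopping (which is why the localising sequence must also control $B^*$). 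The paper's route has the mild virtue of quoting a general supermartingale $L^2$ estimate as a black box, but your argument is more self-contained.
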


\begin{proof}
We first observe that
$V(t,b,|b|) - (t^{\frac{1}{2}} - C|b|) \leq V(0,b,|b|) -(-C|b|) \leq C|b|$ holds as a consequence
of the proof of Lemma \ref{lem:stopreg} which gives us the estimate $V(t,b,|b|) \leq t^{\frac{1}{2}}$. Letting $t\to 0$
this also imples $V(t,0,0) \leq t^{\frac{1}{2}}$ and we can show (i) by
\[V(t,b,b^*)\leq V(t,0,b^*) \leq V(t,0,0) \leq t^{\frac{1}{2}}.\]
$V$ is monotone increasing in $t$ and monotone decreasing in $|b|$ and $b^*$. So we can observe
for the positive part of $X$ that
\[\E[(X(\sigma)_+)^2] \leq (V(T,0,0)_+)^2 \leq T. \]
For the negative part $X(t)_-$ we can use $V(t,b,b^*) \geq t^{\frac{1}{2}} -Cb^* \geq -Cb^*$ and estimate
\[\E[(X(\sigma)_-)^2] \leq C^2\E[B^*(T)^2]. \]
In summary we have
\[\E[X(\sigma)^2] \leq T+C^2\E[B^*(T)^2]. \]

To show the last assertion, we now split $X$ into a sum of bounded processes in the following way. Define
the stopping times $(\sigma_n)_{n=0}^{\infty}$ by
\[\sigma_n = \inf\{t : |B(t)| = 2^n\} \wedge T, \]
and define the processes $X_n$, obtained by starting $X$ at time $\sigma_{n-1}$ and stopping it
at time $\sigma_n$:
\[X_n(t) = \presuper{\sigma_{n-1}}{X}^{\sigma_n}(t) = (X_{t\wedge \sigma_n} - X_{\sigma_{n-1}})\mathbbm{1}_{\llbracket \sigma_{n-1},T\rrbracket}(t). \]
Of course, we have $X = \sum_{n=1}^{\infty} X_n$ and the trajectories of $(X_n(t))_{0 \leq t \leq T}$ are only different from zero on the set $(\{\sigma_{n-1} < T\})_{n=1}^{\infty}$. 

The probability of these events can be estimated by
\begin{align}\label{app1}
\P[\sigma_{n-1} < T] \leq c_1 e^{-c_2 2^{2n}},
\end{align}
for some constants $c_1 = c_1(T)$ and $c_2 = c_2(T)$.

Using a classical inequality on uniformly bounded supermartingales (apparently due to P. Meyer \cite{Me72}) we obtain that each $M_n$
is a square integrable martingale whose norm can be estimated by
\begin{align}\label{app2}
||M_n(T)||^2_{L^2(\P)} \leq c_32^{c_4 n}\P[\sigma_{n-1} < T]
\end{align}
for some constants $c_3, c_4$ depending only on $T$.

Combining \eqref{app1} and \eqref{app2}, we deduce that
\[||M(T)||^2_{L^2(\P)} = \sum_{n=1}^{\infty}||M_n(T)||^2_{L^2(\P)} < \infty.\]
\end{proof}

For the convenience of the reader we spell out the message of Meyer's Theorem \cite[Theorem 46]{Me72} in
the present context.

\begin{theorem}[Meyer] \label{A.2}
Let $X = (X(t))_{0 \leq t \leq T}$ be a uniformly bounded supermartingale
\[ ||X||_{\infty} := \sup_{0 \leq t \leq T} ||X(t)||_{L^{\infty}} \leq c < \infty.\]
Denoting by $X = M-A$ its Doob-Meyer decomposition we get that $M$ is a square integrable
martingale whose norm can be estimated by $||M||_2^2 := ||M(T)||_{L^2}^2 \leq 18c^2$
\end{theorem}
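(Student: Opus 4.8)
The plan is to prove the quantitative estimate by applying Ito's formula to the process $X^2$ --- in discrete time this is nothing but the orthogonality of martingale increments, $\E[M(T)^2]=\sum_k\E[(M(t_k)-M(t_{k-1}))^2]$ --- and then to bound the resulting terms using only that $|X|\le c$, which in particular forces the compensator $A$ to have small total mass. Since $X$ is uniformly bounded it is of class (D), so the Doob--Meyer decomposition $X(t)=X(0)+M(t)-A(t)$ is available with $M(0)=A(0)=0$, $M$ a uniformly integrable martingale and $A$ predictable and non-decreasing. In the situation relevant for this paper the supermartingale $X=V(t,B(t),B^*(t))$, and hence $M$ and $A$, is continuous, and I shall carry out the argument in that case; the general right-continuous case requires in addition an estimate of the jump cross terms and is exactly the content of \cite{Me72}.

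Since a priori $M$ is only a uniformly integrable martingale, I would first localise with $\tau_n:=\inf\{t\ge 0:|M(t)|\ge n\}$, so that the stopped process $M^{\tau_n}$ is bounded and in particular square integrable. Taking expectations in $X(T\wedge\tau_n)=X(0)+M(T\wedge\tau_n)-A(T\wedge\tau_n)$ gives $\E[A(T\wedge\tau_n)]=\E[X(0)]-\E[X(T\wedge\tau_n)]\le 2c$, using $\E[X(0)]\le c$ and $\E[X(T\wedge\tau_n)]\ge -c$. Applying Ito's formula to $(X^{\tau_n})^2$ and taking expectations --- the integral against $dM^{\tau_n}$ contributing nothing because $X$ is bounded and $M^{\tau_n}\in L^2$ --- yields
\[\E\big[\langle M\rangle_{T\wedge\tau_n}\big]=\E\big[X(T\wedge\tau_n)^2\big]-\E\big[X(0)^2\big]+2\,\E\Big[\int_0^{T\wedge\tau_n}X(s)\,dA(s)\Big].\]
Here the last integral is at most $c\,A(T\wedge\tau_n)$ since $|X(s)|\le c$ and $A$ is non-decreasing, while $\E[X(T\wedge\tau_n)^2]\le c^2$ and $-\E[X(0)^2]\le 0$; combining this with $\E[A(T\wedge\tau_n)]\le 2c$ gives $\E[\langle M\rangle_{T\wedge\tau_n}]\le c^2+4c^2=5c^2$, uniformly in $n$. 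Letting $n\to\infty$ and using monotone convergence gives $\E[\langle M\rangle_T]\le 5c^2<\infty$, hence $M$ is genuinely square integrable and $\E[M(T)^2]=\E[\langle M\rangle_T]\le 5c^2$. The constant $18c^2$ in the statement is simply what the bookkeeping of \cite{Me72} delivers, and in particular also covers the jump contributions in the general case.

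The one genuine subtlety is the apparent circularity in using the identity for $\E[\langle M\rangle_T]$ before $M$ is known to lie in $L^2$; this is precisely what the ``localise, estimate uniformly in $n$, pass to the limit'' bootstrap resolves. The only other point --- irrelevant for the continuous setting used here, but responsible for the weaker constant in general --- is that when $X$ has jumps one has $[M]_T=[X]_T+\sum_{s\le T}\Delta A(s)\big(\Delta X(s)+\Delta M(s)\big)$, and this cross term has to be controlled separately, e.g.\ by Kunita--Watanabe, or simply imported from \cite{Me72}.
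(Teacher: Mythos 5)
Your proof is correct and takes a genuinely different route from the one in the paper. The paper reduces to a supermartingale $X=(X(n))_{n=0}^N$ in finite discrete time, bounds $\Vert\Delta A(n)\Vert_{L^\infty}\le 2c$, telescopes $A(N)^2\le 2\sum_n \Delta A(n)\,(A(N)-A(n-1))$, conditions on $\mathcal{F}(n-1)$ and uses $\E[A(N)-A(n-1)\mid\mathcal{F}(n-1)]=\E[X(n-1)-X(N)\mid\mathcal{F}(n-1)]\le 2c$ together with $\E[A(N)]\le 2c$ to get $\E[A(N)^2]\le 8c^2$; the bound $\E[M(N)^2]\le 2\E[X(N)^2]+2\E[A(N)^2]\le 18c^2$ then follows from $M=X+A$. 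You instead work in continuous time, apply It\^o to $X^2$, and estimate $\E[\langle M\rangle_T]=\E[X(T)^2]-\E[X(0)^2]+2\E[\int_0^T X\,dA]$ directly, using the same two ingredients $|X|\le c$ and $\E[A(T)]\le 2c$, and you correctly handle the circularity by localising before invoking the vanishing expectation of $\int X\,dM$. The two routes have complementary virtues: yours is shorter and gives the sharper constant $5c^2$ (absorb $X(0)$ into $M$ or not; it costs at most $c^2$ either way), but as you yourself note it does not immediately deliver the general c\`adl\`ag case, because there $\langle M\rangle$ and $[M]$ differ and the It\^o expansion of $X^2$ acquires jump cross terms. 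The paper's discrete-time telescoping sidesteps all of that and, via the standard skeleton approximation, yields the general statement at the cost of the weaker constant $18c^2$. For the application in Lemma \ref{lem:squint} the process is continuous, so your tighter continuous-time argument would in fact suffice there.

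One small bookkeeping remark: the paper's ``identity'' $A(N)^2 = 2\sum_{n}\Delta A(n)\sum_{j\ge n}\Delta A(j)$ is really an inequality $A(N)^2 \le 2\sum_{n}\Delta A(n)(A(N)-A(n-1))$ (the right side exceeds $A(N)^2$ by $\sum_n(\Delta A(n))^2\ge 0$); since the inequality points in the useful direction, the conclusion is unaffected, and your It\^o route avoids this detour entirely.
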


\begin{proof}
By standard approximation results it will suffice to show the result for a super-martingale $X = (X(n))_{n=0}^N$
in finite discrete time. Note that in this case we have $A(0) = 0$ and
\[A(n+1)-A(n) = \E[X(n+1)-X(n)|\F(n)], \quad n=1,\dots,N \]
so that
\[||\Delta A(n)||_{L^{\infty}} \leq 2c.\]
We may telescope $A(N) = \sum_{n=1}^N(A(n)-A(n-1))$ to obtain
\begin{align*}
A(N)^2 &= 2 \sum_{n=1}^N(A(n)-A(n-1))\sum_{j=n}^N(A(j)-A(j-1)) \\
& = 2 \sum_{n=1}^N (A(n)-A(n-1))(A(N)-A(n-1)).
\end{align*}
By taking expectations we get
\begin{align*}
\E[A(N)^2] &= 2 \sum_{n=1}^N \E[\E[(A(n)-A(n-1))(A(N)-A(n-1))|\F(n-1)]] \\
&=2 \sum_{n=1}^N\E[ (A(n)-A(n-1))\E[A(N)-A(n-1)|\F(n-1)]].
\end{align*}
The final term is uniformly bounded as 
\[\E[A(N)-A(n-1)|\F(n-1)] = \E[X(N)-X(n-1)|\F(n-1)] \leq 2c. \]
This yields
\begin{align*}
\E[A(N)^2] &\leq 4c \sum_{n=1}^N \E[A(n)-A(n-1)] \\
&=4c \E[X(N)-X(0)] \leq 8c^2.
\end{align*}
To obtain a bound for $||M||_2$ we use the relation $M=X+A$
and $||X(N)||_{L^\infty} \leq c$ to get
\begin{align*}
\E[M(N)^2] \leq 2\E[A(N)^2] + 2c^2 \leq 18c^2.
\end{align*}
\end{proof}

\section{Some facts on the stopping time of first leaving a corridor}

We discuss the first exit time of the interval $[-h,2+h]$ for some $h>0$ for a standard Brownian motion $B$ started at $B(0)=0$.
\[\sigma^h := \inf \{t : |B(t)-1|=1+h\}.\]
This stopping time has a well-known density and a well-known Laplace-Transform $\L$ (see e.g.\ \cite[Section 2.2.8.C]{KaSh88} given by 
\[\L(\theta) = \frac{\cosh(\sqrt{2\theta})}{\cosh((1+h)\sqrt{2\theta})}.\]
One can calculate the expected value of the stopping time by noting that $\E[\sigma^h] = \E[B_{\sigma^h}^2]$
so that $\E[\sigma^h] =h(2+h)$. The Jensen-inequality directly implies that the fractional moments of order less than 
$1$ also exist and one has the estimate $\E[(\sigma^h)^{\frac{1}{2}}] \leq (h(2+h))^{\frac{1}{2}}$.
This motivates to conjecture that $\frac{\E[(\sigma^h)^{\frac{1}{2}}]}{h} \to \infty$ for $h \to 0$. We can get an expression for this moment using
the Laplace-transform above by the formula 
$\E[(\sigma^h)^{\frac{1}{2}}] = \frac{-1}{\Gamma(\frac{1}{2})}\int_{0}^{\infty} \theta^{-\frac{1}{2}}\L'(\theta) \,d\theta$, which
we can evaluate to 
\[\E[(\sigma^h)^{\frac{1}{2}}] = \sqrt{\mfrac{2}{\pi}}\left[\int_0^{\infty} \mfrac{\sinh(hu)}{u\cosh((1+h)u)^2} \,du + 
h\int_0^{\infty} \mfrac{\cosh(u)\tanh((1+h)u)}{u\cosh((1+h)u)} \,du\right] \]
where a substitution $u=\sqrt{2\theta}$ was used. It can be shown that both of these integrals are in fact finite
for positive $h$, but we are only interested in the above limiting behavior of this expression. To see this first note that
the integrands of both integrals are always positive. Furthermore the hyperbolic tangent converges to $1$. Therefore we can fix a constant $K$ such that for $h < 1$ we have that $\tanh((1+h)u) \geq \frac{1}{2}$ for $u \geq K$.
Putting this together we make the following estimate:
\begin{align*}
\frac{\E[(\sigma^h)^{\frac{1}{2}}]}{h} &\geq \frac{1}{\sqrt{2\pi}} \int_K^{\infty} \frac{\cosh(u)}{u\cosh((1+h)u)}\,du \\
& \geq \frac{1}{2\sqrt{2\pi}}\int_K^{\infty} \frac{e^{-hu}}{u} \, du = \frac{1}{2\sqrt{2\pi}}\int_{hK}^{\infty} \frac{e^{-u}}{u} \,du.
\end{align*}
The last expression now obviously diverges for $h \to 0$. The following Lemma which we will need later on uses the above observations:

\begin{lemma} \label{lem:superlinear}
Let $K>0$ be an arbitrary constant. There exist $t,h>0$ such that
\[\E[(t+\sigma^h)^{\frac{1}{2}}-t^{\frac{1}{2}}] \geq Kh.\]
\end{lemma}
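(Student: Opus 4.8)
The plan is to deduce the statement directly from the super-linear growth of $h \mapsto \E[(\sigma^h)^{1/2}]$ established in the discussion immediately preceding the lemma, combined with a soft monotone-convergence argument in the variable $t$.

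First I would record the relevant monotonicity in $t$. For each fixed $s>0$ the map $t \mapsto \sqrt{t+s}-\sqrt{t}$ is nonincreasing on $(0,\infty)$, since its derivative $\tfrac{1}{2}(t+s)^{-1/2}-\tfrac{1}{2}t^{-1/2}$ is strictly negative, and it increases to $\sqrt{s}$ as $t\searrow 0$. Applying this pathwise with $s=\sigma^h$ and invoking the monotone convergence theorem yields
\[
\lim_{t \searrow 0}\E\big[\sqrt{t+\sigma^h}-\sqrt{t}\big] \;=\; \sup_{t>0}\E\big[\sqrt{t+\sigma^h}-\sqrt{t}\big] \;=\; \E\big[(\sigma^h)^{1/2}\big],
\]
a relation valid whether the right-hand side is finite or infinite (it is in fact finite for $h>0$, as noted above).

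Next, fix $K>0$. By the estimate derived just before the lemma, $\E[(\sigma^h)^{1/2}]/h \to \infty$ as $h \searrow 0$, so there is $h_0>0$ with $\E[(\sigma^{h_0})^{1/2}] \geq 2Kh_0$. For this fixed $h_0$, the previous display lets us pick $t_0>0$ small enough that $\E[\sqrt{t_0+\sigma^{h_0}}-\sqrt{t_0}] \geq \tfrac{1}{2}\E[(\sigma^{h_0})^{1/2}] \geq Kh_0$. The pair $(t_0,h_0)$ is then the one asserted to exist.

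The only substantive ingredient here is the divergence $\E[(\sigma^h)^{1/2}]/h \to \infty$, which has already been verified via the explicit Laplace-transform computation preceding the statement; once that is granted, the lemma follows with only routine bookkeeping, so I do not anticipate a genuine obstacle. If one prefers to avoid quoting the explicit Laplace-transform identity, an alternative would be to bound $\E[(\sigma^h)^{1/2}]$ from below directly — e.g.\ by restricting to the event that $B$ hits the near level $-h$ before the far level $2+h$, on which $\sqrt{\sigma^h}$ is comparable to the exit time of a small symmetric interval — but the computation already in the text makes this unnecessary.
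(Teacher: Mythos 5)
Your argument is correct and takes essentially the same route as the paper's: both rely on the divergence $\E[(\sigma^h)^{1/2}]/h \to \infty$ established just before the lemma, and then on the monotone convergence $\E[\sqrt{t+\sigma^h}-\sqrt{t}] \nearrow \E[(\sigma^h)^{1/2}]$ as $t \searrow 0$. The only difference is cosmetic bookkeeping: the paper picks $h$ so that $\E[(\sigma^h)^{1/2}] \geq Kh+1$ and absorbs the loss from finite $t$ into the additive slack of $1$, whereas you pick $h_0$ with $\E[(\sigma^{h_0})^{1/2}] \geq 2Kh_0$ and retain at least half of it; both variants are trivially equivalent.
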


\begin{proof}
By the above observations we can choose $h$ small enough to obtain $\E[(\sigma^h)^{\frac{1}{2}}] \geq Kh+1$.
We also have $\E[(\sigma^h)^{\frac{1}{2}}]-\E[(t+\sigma^h)^{\frac{1}{2}}-t^{\frac{1}{2}}] < 1$ for small enough
$t$ by monotone convergence of $(t+s)^{\frac{1}{2}}-t^{\frac{1}{2}}$ to $s^{\frac{1}{2}}$ for $t \to 0$.
\end{proof}

We can now proceed to show the following facts about $t_0$ to prove the final statement of Lemma \ref{lem:stopreg}.

\begin{lemma} \label{lem:Vdec}
Let $C \geq \widehat{C}$ and $V(t,b,b^*)$ the corresponding value function. The map $t \mapsto V(t,1,1)-(t^{\frac{1}{2}} - C)$ is
decreasing and if $V(1,1,1) > 1-C$ it follows that $V(1,1,1) - (1 - C) < V(0,1,1) - (-C)$.
\end{lemma}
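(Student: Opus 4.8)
The first assertion needs nothing new: Lemma~\ref{lem:valueprops}(iii) already says that $t\mapsto V(t,1,1)-\sqrt t$ is decreasing, and adding the constant $C$ does not affect monotonicity, so $t\mapsto V(t,1,1)-(\sqrt t-C)$ is decreasing. In particular $V(0,1,1)+C\geq V(1,1,1)-1+C$, so under the hypothesis $V(1,1,1)>1-C$ we also get $V(0,1,1)>-C$; thus both $(0,1,1)$ and $(1,1,1)$ lie in $NS$, and by Lemma~\ref{lem:stopreg} this forces $t_0>1$. These remarks will be used freely below.

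For the strict inequality the plan is as follows. First I would record the soft bound $V(1,1,1)\leq V(0,1,1)+1$: comparing the two suprema in \eqref{eq:Vchar} term by term, for each $\tau\in\T$ the integrands differ exactly by $\sqrt{1+\tau}-\sqrt\tau\leq 1$, so the bound follows upon taking expectations and suprema. The whole issue is to turn ``$\leq$'' into ``$<$''. The lever is that $\sqrt{t+1}-\sqrt t=\bigl(\sqrt{t+1}+\sqrt t\bigr)^{-1}<1$ strictly for every $t>0$, with equality only at $t=0$, so any genuine ``motion away from time $0$'' generates a strict gain; I would extract this gain through the (super)martingale property of the value process established in Lemma~\ref{lem:valuemartingale}.

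Concretely, let $\tilde B$ be a Brownian motion from $0$, set $b(u):=1+\tilde B(u)$ and $b^*(u):=1\vee\sup_{0\leq v\leq u}|1+\tilde B(v)|$, and let $P(u):=(u,b(u),b^*(u))$ be the state process issued from $(0,1,1)$, so that $P(u)+(1,0,0)$ is the state process issued from $(1,1,1)$. Since $(1,1,1)\in NS$, let $\tau$ be the first time $P(u)+(1,0,0)$ enters $S$. Then $\tau>0$ a.s., because $(u+1)/b(u)^2\to 1<t_0$ as $u\downarrow 0$, so the diffusion cannot reach $S$ instantaneously. By Lemma~\ref{lem:valuemartingale}, $u\mapsto V\bigl(P(u\wedge\tau)+(1,0,0)\bigr)$ is a martingale and $u\mapsto V(P(u))$ is a supermartingale. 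Fixing $T>0$ and putting $\rho:=\tau\wedge T$, optional stopping yields $V(1,1,1)=\E\bigl[V(P(\rho)+(1,0,0))\bigr]$ and $\E[V(P(\rho))]\leq V(0,1,1)$; the integrability needed here comes from \eqref{p2}, which on $\{u\leq T\}$ traps $V$ between $-Cb^*(T)$ and $\sqrt T+Cb^*(T)$, with $b^*(T)\in L^1$. Finally, Lemma~\ref{lem:valueprops}(iii) gives $V(t+1,b,b^*)\leq V(t,b,b^*)+(\sqrt{t+1}-\sqrt t)$ at every state, and since $\rho>0$ a.s.\ the gap $\sqrt{\rho+1}-\sqrt\rho$ is strictly below $1$ a.s.; hence $V\bigl(P(\rho)+(1,0,0)\bigr)<V(P(\rho))+1$ a.s., and, the difference being a nonnegative integrable random variable that is a.s.\ strictly positive, taking expectations gives $V(1,1,1)<\E[V(P(\rho))]+1\leq V(0,1,1)+1$, i.e.\ $V(1,1,1)-(1-C)<V(0,1,1)-(0-C)$.

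The main obstacle is precisely the passage from ``$\leq$'' to ``$<$'': the termwise comparison in \eqref{eq:Vchar} is too crude to detect it, and one has to let the diffusion actually sample times $u>0$, i.e.\ show $\tau>0$ a.s. This is the one place where the hypothesis $(1,1,1)\in NS$, equivalently $t_0>1$, is indispensable. The remaining points --- the validity of the optional-stopping identities and the attendant integrability --- are routine once \eqref{p2} is invoked.
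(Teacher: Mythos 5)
Your argument is correct, and it follows a genuinely different route from the paper's. The paper proves the strict inequality ``by hand'': it picks a near-optimal bounded stopping time $\nu\geq 1$ for the problem started at $(1,1,1)$, establishes the dispersion estimate $\P[\nu>1+2\varepsilon]>(2\varepsilon)^{1/2}$ (arguing by contradiction that otherwise an optimizing sequence would converge to $1$ a.s., forcing $V(1,1,1)=1-C$), shifts $\nu$ back by one time unit, and then does a quantitative concavity computation with $\sqrt{1+s}-\sqrt{s}$ to squeeze out a positive gap of order $\varepsilon$. You instead invoke the martingale structure from Lemma~\ref{lem:valuemartingale} directly: coupling the value process issued from $(0,1,1)$ (supermartingale) with the one issued from $(1,1,1)$ (martingale up to $\tau$), stopping at the bounded time $\rho=\tau\wedge T>0$ a.s., and then exploiting the \emph{pointwise} strict inequality $V(\rho+1,b(\rho),b^*(\rho))<V(\rho,b(\rho),b^*(\rho))+1$ coming from Lemma~\ref{lem:valueprops}(iii) and the fact that $\sqrt{\rho+1}-\sqrt\rho<1$ strictly once $\rho>0$. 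Integrability is handled cleanly via \eqref{p2}, and your observation that $(u+1)/b^*(u)^2\to 1<t_0$ (you wrote $b(u)^2$, which is the same at a hitting time of $S$) indeed guarantees $\tau>0$ a.s. The two routes are of comparable length, but yours replaces the $\varepsilon$-bookkeeping and the ``dispersion'' clause by a single application of optional stopping plus a pointwise strict concavity gap, which is arguably more transparent; the paper's approach has the minor advantage of not relying on the machinery of Lemma~\ref{lem:valuemartingale} at this point, using only the raw definition \eqref{eq:Vchar} and elementary manipulations of stopping times.
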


\begin{proof}
We need a quantitative version of Lemma \ref{lem:valueprops} (ii) which already shows that $t \mapsto V(t,1,1) - (t^{\frac{1}{2}} - C)$ is decreasing. 
First choose some $\nu \geq 1$ to be a bounded 
stopping time which achieves 
\begin{equation}
V(1,1,1)-\E^{(1,1,1)}[\nu^{\frac{1}{2}} - C B^*(\nu)] < \varepsilon \label{form:Vapprox}
\end{equation}
such that
\begin{equation}
\P[\nu > 1+2\varepsilon] > (2\varepsilon)^{\frac{1}{2}} \label{form:nuproper}
\end{equation}
for arbitrary $\varepsilon > 0$. It is clear by definition of $V$ that there exists a stopping time which satisfies \eqref{form:Vapprox}. Suppose
there is no appropriate stopping time such that \eqref{form:nuproper} is satisfied, then there is an optimizing sequence of
bounded stopping times which converge to $1$ in probability and thus a subsequence which converges almost surely.
This would imply that $V(1,1,1) = 1-C$ which contradicts the assumptions of the Lemma.

Now, we can consider the stopping time $\nu$ as a randomized stopping time
with respect to the filtration $(\G^{(1)}(u))_{u \geq 1}$. The shifted stopping time $\nu':=\nu-1$ is then a randomized stopping time
with respect to $(\G^{(0)}(u))_{u \geq 0}$. We can now estimate

\begin{align*}
[V(0,1,1)&-(-C)] - [V(1,1,1)-(1-C)] \\
& \geq \E^{(0,1,1)}[\nu'^{\frac{1}{2}} - C B^*(\nu')] - \E^{(1,1,1)}[\nu^{\frac{1}{2}} - C B^*(\nu)] + 1 - \varepsilon \\
& = \E[(\nu')^{\frac{1}{2}} - (1+\nu')^{\frac{1}{2}} + 1] - \varepsilon \\
& \geq \E[((\nu')^{\frac{1}{2}} - (1+\nu')^{\frac{1}{2}} + 1)\mathbbm{1}_{\nu' > 2\varepsilon}] - \varepsilon \\
& \geq [(2\varepsilon)^{\frac{1}{2}} - ((1+2\varepsilon)^{\frac{1}{2}} - 1)](2\varepsilon)^{\frac{1}{2}}  - \varepsilon \\
& > \varepsilon - \sqrt{2}\varepsilon^{\frac{3}{2}} > 0.
\end{align*}
To get from the second to the third line, we used that by definition $\nu = \nu'+1$ and $\E^{(0,1,1)}[B^*(\nu')] = \E^{(1,1,1)}[B^*(\nu)]$.
We dropped the superscript to emphasize that we now view $\nu'$ as a stopping time with respect to
the filtration $(\G^{(0)}(u))_{u \geq 0}$. To obtain the fourth and the fifth line in the derivation, we observe that the
map $t \mapsto t^{\frac{1}{2}} - (1+t)^{\frac{1}{2}} + 1$ is non-negative and non-decreasing, and use 
\eqref{form:nuproper}. The sixth line can be derived
by noting that $s \mapsto (1+s)^{\frac{1}{2}} - 1$ is concave and thus lies completely under its tangent at $s=0$.
The last inequality holds for $\varepsilon$ small enough.
In the same way one can actually show that as long as the spread $V(t,1,1) - (t^{\frac{1}{2}} - C)$ is strictly
positive, it is also strictly decreasing.
\end{proof}

\begin{lemma} \label{lem:t0int}
Let $C \geq \widehat{C}$ and $t_0 = t_0(C)$ the critical point separating $S$ from $NS$. Then
\begin{enumerate}
\item $t_0 >0$ and
\item $t_0 < \infty$.
\end{enumerate}
\end{lemma}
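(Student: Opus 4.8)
The statement splits into $t_0>0$ and $t_0<\infty$, which I would prove by completely different arguments. The $t_0>0$ half is essentially a one-line application of Lemma \ref{lem:superlinear}; the $t_0<\infty$ half is where the work lies.

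\textbf{$t_0>0$.} The plan is to exhibit a diagonal point $(t_*,1,1)$ with $t_*>0$ lying in the non-stop region $NS$; by Lemma \ref{lem:stopreg} this forces $t_0>t_*$. Apply Lemma \ref{lem:superlinear} with $K:=2C$ to get $t_*,h>0$ with $\mathbb{E}\bigl[(t_*+\sigma^h)^{1/2}-t_*^{1/2}\bigr]\ge 2Ch$, where $\sigma^h$ is the first exit time of $(-h,2+h)$ by a Brownian motion from $0$. Translating by $1$ and reflecting $B\mapsto -B$, $\sigma^h$ has the same law as the first exit time $\tau$ of the interval $(-1-h,1+h)$ by a Brownian motion started at $1$ — i.e.\ by the Brownian motion appearing in \eqref{V4} when we start from $(t_*,1,1)$. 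The point of this interval is that the process stays inside $(-1-h,1+h)$ up to $\tau$, so the running maximum of $|B|$ — equal to $1$ at time $t_*$ — grows to at most $1+h$, i.e.\ $B^*(t_*+\tau)\le 1+h$. Using $\tau$ as a competitor in \eqref{V4},
\[
V(t_*,1,1)\ \ge\ \mathbb{E}^{(t_*,1,1)}\!\bigl[(t_*+\tau)^{1/2}-C B^*(t_*+\tau)\bigr]\ \ge\ \mathbb{E}\bigl[(t_*+\sigma^h)^{1/2}\bigr]-C(1+h)\ \ge\ t_*^{1/2}-C+Ch\ >\ t_*^{1/2}-C,
\]
so $(t_*,1,1)\in NS$ and $t_0>t_*>0$.

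\textbf{$t_0<\infty$.} Argue by contradiction: suppose $t_0=\infty$. Then every $(t,1,1)$, $t>0$, is in $NS$, so $V(t):=V(t,1,1)$ solves the integro-differential equation \eqref{V14a} on all of $(0,\infty)$; by \eqref{p2} the spread $s(t):=V(t)-(t^{1/2}-C)$ satisfies $0\le s(t)\le 2C$, and by Lemma \ref{lem:valueprops}(iii) it is decreasing, so $L:=\lim_{t\to\infty}s(t)$ exists in $[0,2C]$. Substituting $V(t)=t^{1/2}-C+s(t)$ into \eqref{V14a} yields
\[
2t\,s'(t)=s(t)-C+\Phi(t)+\int_0^\infty\bigl(s(t+u)-s(t)\bigr)g(u)\,du,\qquad \Phi(t):=\int_0^\infty\bigl((t+u)^{1/2}-t^{1/2}\bigr)g(u)\,du.
\]
Since $\int_0^\infty u\,g(u)\,du=2$ one has $\Phi(t)\le t^{-1/2}\to 0$, and — splitting the nonlocal term near $0$ (where $s$ is $C^1$, by bootstrapping the equation) from the rest (where $g$ is integrable and $|s(t+u)-s(t)|\le s(t)-L\to0$) — that term also tends to $0$. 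Hence $2t\,s'(t)\to L-C$; as $s$ is bounded, non-negative and decreasing, this forces $L=C$ ($L<C$ would send $s$ to $-\infty$, $L>C$ contradicts $s'\le 0$). Now $L=C$ means $V(t)-t^{1/2}=s(t)-C\downarrow 0$ and, being decreasing, is $\ge 0$ for all $t\ge 0$; by the scaling relation \eqref{p3}, $V(t,b^*,b^*)\ge t^{1/2}$ on the whole diagonal, and then, applying the supermartingale inequality of Lemma \ref{lem:valuemartingale} at (a localization of) the exit time of $(-b^*,b^*)$, $V(t,b,b^*)\ge t^{1/2}$ for every $(t,b,b^*)\in D$. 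Finally, under $t_0=\infty$ the Brownian trajectory started from $(1,1,1)\in NS$ never enters $S$: along it $B^*(u)\ge 1>0$, so either $|B(u)|<B^*(u)$ (in $NS$ by Lemma \ref{lem:hollow}) or $(u,B(u),B^*(u))$ is a diagonal point with $u/B^*(u)^2<\infty=t_0$ (in $NS$ by Lemma \ref{lem:stopreg}). Thus by Lemma \ref{lem:valuemartingale} the process $(V(u,B(u),B^*(u)))_{u\ge1}$ is a genuine martingale, whence $V(1,1,1)=\mathbb{E}^{(1,1,1)}[V(u,B(u),B^*(u))]\ge \mathbb{E}[u^{1/2}]=u^{1/2}$ for all $u\ge1$, i.e.\ $V(1,1,1)=\infty$, contradicting Lemma \ref{lem:valueprops}(ii). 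Hence $t_0<\infty$.

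\textbf{Main obstacle.} The delicate part is the step $L=C$: rigorously passing the limit $t\to\infty$ inside the nonlocal term of \eqref{V14a}. This requires the local $C^1$-regularity of $s$ (via bootstrapping, which also gives $\sup_{[t,t+1]}|s'|\to0$), the precise decay $\Phi(t)=t^{-1/2}+O(t^{-3/2})$, and the integrability $\int^\infty g<\infty$; one then reads off $2t\,s'(t)\to L-C$ and converts it into the sign constraint $L=C$ using only $0\le s\le 2C$ and $s'\le0$. This asymptotic analysis is exactly the place where the preparatory Lemma \ref{lem:Vdec} — strict decrease of the spread wherever it is positive — is the natural alternative tool for excluding the degenerate behaviours of $s$ at infinity.
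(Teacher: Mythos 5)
Your proof splits naturally into the two halves, and my assessment of them is quite different.

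\textbf{$t_0>0$.} This is essentially the paper's argument: both proofs invoke Lemma~\ref{lem:superlinear} for the exit time of a narrow corridor of width $2h$ around the level $1$ and conclude that the gain $Ch$ in the running-max penalty is eventually beaten by $\E[(t+\sigma^h)^{1/2}-t^{1/2}]$. The paper phrases it as a contradiction with the supermartingale property under the hypothesis $t_0=0$; you phrase it directly by exhibiting a competitor stopping time in \eqref{V4} and concluding $(t_*,1,1)\in NS$. Your version is, if anything, a touch cleaner (no need to invoke Lemma~\ref{lem:valuemartingale}), but the computation is the same.

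\textbf{$t_0<\infty$.} Here you take a genuinely different route. The paper does \emph{not} pass through the OIDE at all; instead it uses Lemma~\ref{lem:Vdec} to produce a strictly positive deficit $\alpha:=V(1,1,1)-1+C<C$, fixes $h>\alpha/(C-\alpha)$, and then combines the martingale property with scaling \eqref{p3} and the monotonicity of the spread $t\mapsto V(t,b,|b|)-(t^{1/2}-C|b|)$ twice, obtaining an inequality of the form $0<-h(C-\alpha)+\alpha+\E^{(t,1,1)}[(\sigma^h)^{1/2}-t^{1/2}]$, whose last term vanishes as $t\to\infty$. Your argument instead analyzes the OIDE \eqref{V14a} (valid on all of $(0,\infty)$ when $t_0=\infty$) near $t=\infty$, pins down the limit $L$ of the spread $s(t)$, and feeds $V(t)\ge t^{1/2}$ into a martingale-blow-up at $(1,1,1)$. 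Both are valid contradiction arguments; the paper's is more elementary and avoids any regularity considerations for $V'$, while yours exposes nicely the fixed-point role of $L=C$ and the connection with the analytic structure of Section~\ref{sec:oide}.

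One improvement you can make: the ``main obstacle'' you flag is not actually there, because you only need $L\ge C$, not the full asymptotic $2t\,s'(t)\to L-C$. Since $s$ is decreasing and $g\ge 0$, the nonlocal term $I(t):=\int_0^\infty(s(t+u)-s(t))g(u)\,du$ is $\le 0$ unconditionally, so the OIDE yields the one-sided bound
\begin{equation*}
2t\,s'(t)\ \le\ s(t)-C+\Phi(t),
\end{equation*}
with no bootstrapping of $s'$, no asymptotics for $I$, and no uniform $C^1$ estimates. If $L<C$, the right side is eventually $\le (L-C)/2<0$, and integrating $s'(u)\le (L-C)/(4u)$ forces $s\to-\infty$, contradicting $s\ge 0$. (Ruling out $L>C$ is also unnecessary.) Hence $L\ge C$, so $s\ge C$, so $V(t)\ge t^{1/2}$, and the rest of your argument goes through. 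With this simplification the regularity you were worried about --- local $C^1$-bounds for $s$, the precise second-order decay of $\Phi$, the integrability of $g$ away from $0$ --- is no longer needed; the only ingredients are $\Phi(t)\le t^{-1/2}$, the sign of $I$, and the monotonicity of $s$. This makes the OIDE route comparable in length to the paper's Lemma~\ref{lem:Vdec} argument, though still requiring that the OIDE derivation of Section~\ref{sec:valuecont} is available up to $t=\infty$.

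A small detail to spell out in your last step: the inequality $V(t,b,b^*)\ge t^{1/2}$ for interior points uses optional stopping at the exit time $\rho$ of $(-b^*,b^*)$, which is a.s.\ finite but unbounded; you need a localization plus Fatou with the uniform lower bound $V\ge -Cb^*$ (valid on that excursion since $B^*\equiv b^*$ there). This is routine but worth a sentence, since otherwise the supermartingale inequality only applies at bounded stopping times.
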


\begin{proof}
(1): Assume $t_0 = 0$. This means that we actually have 
\[V(t,b^*,b^*) = t^{\frac{1}{2}} - Cb^*\]
for all $t > 0$.
We then obtain for arbitrary $t,h >0$ that
\[\E^{(t,1,1)}[(\sigma^h)^{\frac{1}{2}}-C(1+h)] \leq (t)^{\frac{1}{2}} -C\]
by the supermartingale property of the value-process. This is a contradiction to Lemma \ref{lem:superlinear} for small 
enough $t$ and $h$.

(2): Assume $t_0 = \infty$. This means that $V(t,1,1) > t^{\frac{1}{2}} - C$ everywhere. As $C \geq \widehat{C}$
we also have $V(0,1,1) \leq 0$. By Lemma \ref{lem:Vdec} we can set $\alpha:= V(1,1,1)-1+C < C$. Now
fix some $h > \frac{\alpha}{C-\alpha}$ and $t > (1+h)^2$. We can then make the following estimate, 
where we use twice the fact that the function $t \mapsto V(t,b,|b|) - (t^{\frac{1}{2}} -C|b|)$ is decreasing
and $\sigma^h$ is defined as before:
\begin{align*}
V(t,1,1) &= \E^{(t,1,1)}[V(\sigma^h,1+h,1+h)] \\
&\leq V(t,1+h,1+h) + \E^{(t,1,1)}[(\sigma^h)^{\frac{1}{2}} - t^\frac{1}{2}]\\
&= (1+h)V\left(\frac{t}{(1+h)^2},1,1\right) + \E^{(t,1,1)}[(\sigma^h)^{\frac{1}{2}} - t^\frac{1}{2}] \\
&\leq t^{\frac{1}{2}} -(1+h)(C-\alpha) + \E^{(t,1,1)}[(\sigma^h)^{\frac{1}{2}} - t^\frac{1}{2}] \\
& < V(t,1,1) - h(C-\alpha) + \alpha + \E^{(t,1,1)}[(\sigma^h)^{\frac{1}{2}} - t^\frac{1}{2}] 
\end{align*}
Now we can eliminate $V(t,1,1)$ on both sides and note that $h$, $C$ and $\alpha$ do not depend on
$t$. The last term however goes to $0$ for $t \to \infty$ by dominated convergence (since
$0 \leq \sqrt{t+\sigma^h} - \sqrt{t} \leq \sqrt{\sigma^h}$ and $\sqrt{t+\sigma^h} - \sqrt{t} \searrow 0$). This leads to
the desired contradiction.
\end{proof}

\section*{Acknowledgements}

Thanks go to an extremely helpful referee who provided a very careful report. The paper in its present form owes much to his suggestions. We also thank Mathias Beiglböck for his insight and advice in the course of many discussions on the present paper and
Josef Teichmann for pointing out to us Meyer's inequality (Theorem \ref{A.2}).
 
\bibliography{biblio}{}
\bibliographystyle{abbrv}

\end{document}